\newtheorem{assumption}{Assumption}
\newtheorem{Exa}{Example}[section]
\newcommand{\rhomax}{\rho_{\mbox{\rm\scriptsize max}}}
\def\tto{\;{\lower 1pt \hbox{$\rightarrow$}}\kern -10pt
           \hbox{\raise 2.8pt \hbox{$\rightarrow$}}\;}
\newcommand{\inT}{\mbox{\rm int}\,}
\newcommand{\beq}{\begin{equation}}
\newcommand{\eeq}{\end{equation}}
\newcommand{\IR}{\makebox{\sf I \hspace{-8.5 pt} R \hspace{-4.0pt}}}
\newcommand{\R}{\IR}
\newcommand{\cF}{{\cal F}}
\newcommand{\cN}{{\cal N}}
\newcommand{\cS}{{\cal S}}
\newcommand{\dom}{\mbox{\rm dom }}
\newcommand{\trace}{\mbox{\rm trace}}
\newcommand{\DC}{\Delta C}
\providecommand{\norm}[1]{\lVert#1\rVert} 
\def\eqnok#1{(\ref{#1})}
\def\noprint#1{}
\newcommand{\eat}[1]{}
\newcommand{\btab}{\begin{tabbing}
\ \ \= thenn \= thenn \= thenn \= thenn \= thenn \= \kill}
\newcommand{\etab}{\end{tabbing}}
\title{Packing Ellipsoids with Overlap%
  \thanks{Version of \today. Research supported by NSF Grants
    DMS-0914524 and DMS-0906818, and DOE Grant DE-SC0002283.}}
\author{Caroline Uhler%
\thanks{IST Austria, Am Campus 1, 3400 Klosterneuburg, Austria. \texttt{caroline.uhler@ist.ac.at}}
\and 
Stephen J. Wright %
\thanks{Computer Sciences Department, 1210
W. Dayton Street, University of Wisconsin, Madison, WI 53706, USA. \texttt{swright@cs.wisc.edu}}}
\begin{document}

\maketitle

\begin{abstract}
  The problem of packing ellipsoids of different sizes and shapes into
  an ellipsoidal container so as to minimize a measure of overlap
  between ellipsoids is considered. A bilevel optimization formulation
  is given, together with an algorithm for the general case and a
  simpler algorithm for the special case in which all ellipsoids are
  in fact spheres. Convergence results are proved and computational
  experience is described and illustrated. The motivating application
  --- chromosome organization in the human cell nucleus --- is
  discussed briefly, and some illustrative results are presented.
\end{abstract}

\begin{keywords}
  ellipsoid packing, trust-region algorithm, semidefinite programming,
  chromosome territories.
\end{keywords}

\begin{AMS}
90C22, 90C26, 90C46, 92B05
\end{AMS}

\section{Introduction} \label{sec:intro}

Shape packing problems have been a popular area of study in discrete
mathematics over many years. Typically, such problems pose the
question of how many uniform objects can be packed without overlap
into a larger container, or into a space of infinite extent with
maximum density. In ellipsoid packing problems, the smaller shapes are
taken to be ellipsoids of known size and shape. In three dimensions,
the ellipsoid packing problem has become well known in recent years,
due in part to colorful experiments involving the packing of M\&Ms
\cite{Donev}.


Finding densest ellipsoid packings is a difficult computational
problem. Most studies concentrate on the special case of sphere
packings, with spheres of identical size. Here, optimal densities have
been found for the infinite Euclidean space of dimensions two and
three. In two dimensions, the densest circle packing is given by the
hexagonal lattice (see \cite{Thue}), where each circle has six
neighbors. The density of this packing (that is, the proportion of the
space filled by the circles) is $\pi/\sqrt{12}$. In dimension three,
it has been proven recently by Hales~\cite{Hales} that the
face-centered cubic (FCC) lattice achieves the densest packing. In
this arrangement, every sphere has 12 neighboring spheres and the
density is $\pi/\sqrt{18}$. For dimensions higher than 3, the problem
of finding the densest sphere packing is still open.

A problem related to sphere packing is {\em sphere covering}. Here,
the goal is to find an arrangement that covers the space with a set of
uniform spheres, as economically as possible. Overlap is not only
allowed in these arrangements, but inevitable. The density is defined
similarly to sphere packing (that is, the total volume of the spheres
divided by the volume covered), but now we are interested in finding
an arrangement of {\em minimal} density. In two dimensions, as for
circle packing, the optimal circle covering is given by the regular
hexagonal arrangement. However, the thinnest sphere covering in
dimension 3 is given not by the FCC lattice, but by the body-centered
cubic (BCC) lattice. In this arrangement, every sphere intersects with
fourteen neighboring spheres; see for example \cite{Rogers}.

In this paper we study a problem that falls between ellipsoid
packing and covering. Given a set of ellipsoids of diverse size and
shape, and a finite enclosing ellipsoid, we seek an arrangement that
minimizes some measure of total overlap between ellipsoid pairs.


Our formulation is motivated by chromosome organization in human cell
nuclei. In biological sciences, the study of chromosome arrangements
and their functional implications is an area of great current
interest.  The territory occupied by each chromosome can be modeled as
an ellipsoid, different chromosomes giving rise to ellipsoids of
different size. The enclosing ellipsoid represents a cell nucleus, the
size and shape of which differs across cell types. Overlap between
chromosome territories has biological significance: It allows for
interaction and co-regulation of different genes. Also of key
significance are the DNA-free interchromatin channels that allow
access by regulatory factors to chromosomes deep inside a cell
nucleus.  Smaller nuclei tend to have tighter packings, so that fewer
channels are available, and the chromosomes packed closest to the
center may not be accessible to regulatory factors.

The arrangement of chromosome territories is neither completely random
nor deterministic. Certain features of the arrangement are believed to
be conserved during evolution \cite{Cremer_primates}, but can change
during such processes as cell differentiation and cancer development
\cite{Berezney_cancer}.  In general, smaller and more
gene-dense chromosomes are believed to be found closer to the center
of the nucleus \cite{Cremer_radial}, and heterologous chromosomes tend
to be nearer to each other than homologous pairs
\cite{Khalil_heterologues}. For further background on chromosome arrangement properties,
see \cite{Cremer, Berezney_nonrandom}. 

A major goal of this paper is to determine whether the experimental
observations made to date about chromosome organization can be
explained in terms of simple geometrical principles, such as minimal
overlap. The minimum-overlap principle appears to be consistent with
the tendency of chromosome territories to exploit the whole volume of
the nucleus, to make the DNA-free channels as extensive as
possible. Our formulation also includes features to discourage close
proximity of homologous pairs. 


The remainder of the paper is organized as follows.  In
Section~\ref{sec:prob}, we outline the mathematical formulation,
define notation, and state a key technical result concerning algebraic
formulations of ellipsoidal containment. In Section~\ref{sec:sphere},
we study the special case of finding a minimal overlap configuration
of {\em spheres} inside an ellipsoidal container. We describe a simple
iterative procedure based on convex linearized approximations that
produces convergence to stationary points of the minimal-overlap
problem.  We show through simulations that our algorithm can be used
to recover known optimal circle and sphere packings.  In
Section~\ref{sec:ellipsoid}, we generalize our optimization procedure
to ellipsoid packing, introducing trust-region stabilization and
proving convergence results.  Section~\ref{sec:biology} describes the
application of our algorithms to chromosome arrangement.

\subsection*{Notation} When $A$ and $B$ are two symmetric matrices,
the relation $A \preceq B$ indicates that $B-A$ is positive
semidefinite, while $A \prec B$ denotes positive definiteness of
$B-A$. Similar definitions apply for $\succeq$ and $\succ$.

Let $X$ be a finite-dimensional vector space over the reals $\R$
endowed with inner product $\langle \cdot, \cdot \rangle$. (The usual
Euclidean space $\R^n$ with inner product $\langle x,y \rangle :=
x^Ty$ and the space of symmetric matrices ${\cal S} \R^{n \times n}$
with inner product $\langle X,Y\rangle := \trace (XY)$ are two examples
of particular interest in this paper.) Given a closed convex subset
$\Omega \subset X$, the normal cone to $\Omega$ at a point $x$ is
defined as
\beq \label{def.normal}
N_{\Omega}(x) := \{ v \in X \mid \langle v,y-x \rangle \le 0 \; \mbox{for all $y \in \Omega$} \}.
\eeq

We use $\partial f$ to denote the Clarke subdifferential of the
function $f: X \to \R$. In defining this quantity, we follow Borwein
and Lewis~\cite[p.~124]{BorL00} by assuming Lipschitz continuity of
$f$ at $x$, and defining the Clarke directional derivative as follows:
\[
f^{\circ}(x;h) := \limsup_{y \to x, t \downarrow 0} \, \frac{f(y+th)-f(y)}{t}.
\]
The Clarke subdifferential is then
\beq \label{def:clarke.sub}
\partial f(x) := \{ v \in X \mid \langle v,h \rangle \le f^{\circ} (x;h), \;
\mbox{for all $h \in X$} \}.
\eeq
When $f$ is convex (in addition to Lipschitz continuous), this
definition coincides with the usual subdifferential from convex
analysis, which is 
\[
\partial f(x) := \{ v \in X \mid f(y) \ge f(x) + \langle v,y-x \rangle \;
\mbox{for all $y \in \dom f$} \}
\]
(see \cite[Proposition~2.2.7]{Cla83}).

\section{Problem Description and Preliminaries} \label{sec:prob}

An ellipsoid $\mathcal{E} \subset \R^n$ can be specified in terms of
its center $c \in \R^n$ and a symmetric positive definite eccentricity
matrix $S \in S \R^{n \times n}$. We can write
\beq \label{eq:defE}
\mathcal{E}  := \{x\in\R^n \mid (x-c)^T S^{-2} (x-c)\leq 1 \}  =
\{ c + S u \mid \norm{u}_2 \le 1 \}.
\eeq
It is often convenient to work with the quantity $\Sigma := S^2$ (also
symmetric positive definite), and thus to rewrite the definition
\eqnok{eq:defE} as
\beq \label{eq:defSig,gen}
\mathcal{E}  := \{x\in\R^n \mid (x-c)^T \Sigma^{-1} (x-c)\leq 1 \}.
\eeq

For the remainder of this section, we assume that $n=3$, that is, the
ellipsoids are three-dimensional. The eigenvalues of $S$ are the
lengths of the principal semi-axes of $\mathcal{E}$; we denote these
by $r_1$, $r_2$, and $r_3$, and assume that these three positive
quantities are arranged in nonincreasing order.  It follows that the
eigenvalues of $\Sigma$ are $r_1^2$, $r_2^2$, and $r_3^2$, and that
the matrices $S$ and $\Sigma$ have the form
\[
S = Q \left[ \begin{matrix} r_1 & 0 & 0 \\ 0 & r_2 & 0 \\ 0 & 0 & r_3 \end{matrix}
\right] Q^T, \qquad
\Sigma = Q \left[ \begin{matrix} r_1^2 & 0 & 0 \\ 0 & r_2^2 & 0 \\ 0 & 0 & r_3^2 \end{matrix}
\right] Q^T,
\]
for some orthogonal matrix $Q$, which determines the orientation of
the ellipse.

In this paper, we are given the semi-axis lengths $r_{i1}$, $r_{i2}$,
and $r_{i3}$ for a collection of $N$ ellipsoids $\mathcal{E}_i$,
$i=1,2,\dotsc,N$. The goal is to specify centers $c_i$ and matrices
$S_i$ for these ellipsoids, such that
\begin{itemize}
\item[(a)] $\mathcal{E}_i \subset \mathcal{E}$, for some fixed
  ellipsoidal container $\mathcal{E}$;
\item[(b)] The eigenvalues of $S_i$ are $r_{i1}$, $r_{i2}$, and
  $r_{i3}$, for $i=1,2,\dotsc,N$;
\item[(c)] Some measure of volumes of the pairwise overlaps
  $\mathcal{E}_i \cap \mathcal{E}_j$, $i,j=1,2,\dotsc,N$, $i \neq j$,
  is minimized.
\end{itemize}
In the following subsections, we give more specific formulations of (c),
first for the case in which all $\mathcal{E}_i$ are spheres (that is,
$r_{i1}=r_{i2}=r_{i3}$, $i=1,2,\dotsc,N$) and then for the general
case. For now, we note that a crucial element in formulating these
problems is ellipsoidal containment, that is, algebraic conditions
that ensure that one given ellipsoid is contained in another. This is
the subject of the following lemma, which is a simple application of
the S-procedure (see \cite[Appendix B.2]{BoyV03}).

\begin{lemma}
\label{lem:S_procedure}
Define two ellipsoids as follows:
\begin{align*}
\mathcal{E} &= \{x\in\R^3 \mid (x-c)^T S^{-2} (x-c)\leq 1 \}  =
\{ c + S u \mid \norm{u}_2 \le 1 \},  \\
 \bar{\mathcal{E}} &= \{x\in\R^3 \mid (x-\bar{c})^T \bar{S}^{-2} (x-\bar{c})\leq 1 \}  =
\{\bar{c} + \bar{S}u \mid \norm{u}_2 \leq 1 \}.
\end{align*}
The containment condition $\bar{\mathcal{E}}\subset\mathcal{E}$ can be
represented as the following linear matrix inequality (LMI) in
parameters $\bar{c}$, $\bar{S}$, $c$, and $S^2$: There exists $\lambda
\in \R$ such that
\begin{equation}
\label{eq:LMI}
\begin{pmatrix} - \lambda I &0&\bar{S}\\0& \lambda-1&(\bar{c}-c)^T\\ \bar{S}&\bar{c}-c&-S^2 \end{pmatrix}\preceq 0.
\end{equation}
\end{lemma}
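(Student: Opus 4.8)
The plan is to reduce the set-containment statement to a single quadratic inequality on the unit ball and then invoke the S-procedure exactly as cited. Using the parametric description $\bar{\mathcal{E}} = \{\bar c + \bar S u \mid \norm{u}_2 \le 1\}$, I would first observe that $\bar{\mathcal{E}} \subset \mathcal{E}$ holds if and only if every point $x = \bar c + \bar S u$ with $u^T u \le 1$ satisfies $(x-c)^T S^{-2}(x-c) \le 1$. Writing $d := \bar c - c$ and substituting $x = \bar c + \bar S u$, containment becomes the implication
\[
u^T u - 1 \le 0 \;\Longrightarrow\; (\bar S u + d)^T S^{-2} (\bar S u + d) - 1 \le 0, \qquad \text{for all } u \in \R^3 .
\]

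Both sides are quadratic in $u$, and the constraint admits the strictly interior point $u=0$ (so that $u^T u - 1 = -1 < 0$), so the S-procedure applies with no gap. It states that the implication holds if and only if there exists $\lambda \ge 0$ with $(\bar S u + d)^T S^{-2}(\bar S u + d) - 1 - \lambda(u^T u - 1) \le 0$ for all $u$. Expanding and collecting terms, the left-hand side is an inhomogeneous quadratic whose value equals the quadratic form associated with $(u,1)$; by the standard homogenization argument (letting $\norm{u}_2 \to \infty$ to control the leading term), this "$\le 0$ for all $u$" condition is equivalent to negative semidefiniteness of the $4\times 4$ matrix
\[
M(\lambda) := \begin{pmatrix} \bar S S^{-2} \bar S - \lambda I & \bar S S^{-2} d \\ d^T S^{-2} \bar S & d^T S^{-2} d + \lambda - 1 \end{pmatrix}.
\]

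Finally, I would convert $M(\lambda) \preceq 0$ into the stated LMI by a Schur-complement step. Since $S \succ 0$ we have $-S^2 \prec 0$, and the Schur complement of the trailing $-S^2$ block in the $7\times 7$ matrix of \eqnok{eq:LMI} is, after using $(-S^2)^{-1} = -S^{-2}$, exactly $M(\lambda)$; hence \eqnok{eq:LMI} holds if and only if $M(\lambda) \preceq 0$. As a consistency check, the diagonal block $-\lambda I \preceq 0$ forces $\lambda \ge 0$ automatically, which matches the sign requirement coming from the S-procedure. Chaining the three equivalences then yields the lemma.

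Given the cited reference, the S-procedure invocation itself is routine, so the main care lies in the algebra: correctly homogenizing the quadratic to read off $M(\lambda)$, and verifying that the Schur complement of \eqnok{eq:LMI} reproduces $M(\lambda)$ precisely, including the placement of $-\lambda I$, the scalar entry $\lambda - 1$, and the cross terms $\bar S$ and $d$. The most error-prone part is tracking signs through the \emph{negative}-semidefinite version of the Schur-complement criterion rather than the more familiar positive-semidefinite form.
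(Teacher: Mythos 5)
Your proposal is correct and follows essentially the same route as the paper's proof: reduce containment to a quadratic inequality over the unit ball via the substitution $x = \bar c + \bar S u$, invoke the S-procedure to introduce the multiplier $\lambda$, and recover the LMI \eqnok{eq:LMI} by taking the Schur complement with respect to the $-S^2$ block. The only cosmetic difference is that you pass through the scalar form of the S-procedure plus an explicit homogenization (and you note the Slater point $u=0$, a detail the paper leaves implicit), whereas the paper cites the S-procedure directly in its matrix form; the resulting matrix $M(\lambda)$ is identical to the paper's intermediate inequality \eqnok{eq:sproc}.
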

\begin{proof}
  The condition $\bar{\mathcal{E}} \subset \mathcal{E}$ can be
  expressed as
\[
(\bar{c} + \bar{S}u-c)^TS^{-2} (\bar{c} + \bar{S}u-c)\leq 1 \quad \textrm{for all } u  \textrm{ such that }  \norm{u}_2 \leq 1.
\]
By multiplying out this inequality we get
\[
u^T\bar{S}S^{-2}\bar{S}u+2u^T\bar{S}S^{-2} (\bar{c}-c)+(\bar{c}-c)^TS^{-2}(\bar{c}-c)-1\leq 0
\]
for all $u$ such that $u^Tu -1\leq 0$. By applying the S-procedure, we
find that this is equivalent to the existence of $\lambda > 0$ such
that
\begin{equation}
\label{eq:sproc}
\begin{pmatrix} \bar{S}S^{-2} \bar{S}&\bar{S}S^{-2} (\bar{c}-c)\\(\bar{c}-c)^TS^{-2}\bar{S}& (\bar{c}-c)^TS^{-2} (\bar{c}-c)-1\end{pmatrix}
\preceq\lambda\begin{pmatrix} I & 0\\0& -1\end{pmatrix}.
\end{equation}
This expression is not linear in the variables $\bar{c}$, $\bar{S}$,
$c$, and $S^2$, but an elementary Schur complement argument shows
equivalence to the linear matrix inequality \eqnok{eq:LMI}, completing
the proof.
\end{proof}

As one special case, the condition $\mathcal{E}_i \subset
\mathcal{E}$, where $\mathcal{E}_i$ is a sphere with center $c_i$ and
radius $r_i$ and $\mathcal{E}$ is an ellipsoid centered at $0$ with
matrix $S$, can be represented as the LMI:
\beq \label{eq:S.circ}
\begin{pmatrix} - \lambda_i I &0&r_i I\\0& \lambda_i-1&c_i^T\\ r_i I&c_i&-S^2\end{pmatrix}\preceq 0.
\eeq
The more general case of $\mathcal{E}_i \subset \mathcal{E}$, where
$\mathcal{E}_i$ is an ellipsoid with center $c_i$ and matrix $S_i$ and
$\mathcal{E}$ is an ellipsoid centered at $0$ with matrix $S$, can be
represented as the LMI:
\beq \label{eq:S.ell}
\begin{pmatrix} - \lambda_i I &0&S_i \\0& \lambda_i-1&c_i^T\\ S_i &c_i &-S^2 \end{pmatrix}\preceq 0.
\eeq

\section{Sphere Packing} \label{sec:sphere}

We give a problem formulation for the case in which all enclosed
shapes are spheres (of arbitrary dimension), and present a successive
approximation algorithm that is shown to accumulate or converge to a
stationary point of the formulation. Some examples of results obtained
with this approach are described at the end of the section.

\subsection{Formulation and Algorithm} \label{sec:sphere:alg}

When the inscribed objects are spheres, the variables in the problem
are the centers $c_i \in R^m$, $i=1,2,\dots ,N$, which we
aggregate as follows:
\beq \label{eq:defc}
c := (c_1,c_2, \dotsc,c_N).
\eeq
The radii $r_i$, $i=1,2,\dotsc,N$ are given.  We express the
containment condition for each sphere as follows:
\beq \label{eq:defOmi}
\mathcal{E}_i \subset \mathcal{E}  \;\; \Leftrightarrow \;\;
c_i \in \Omega_i,
\eeq
where $\Omega_i$ is a closed, bounded, convex set with nonempty
interior. When $\mathcal{E}$ is a sphere of radius $R$ centered at
$0$, we have $\Omega_i := \{ c_i \, : \, \|c_i \| \le
R-r_i\}$. Otherwise, we can define $\Omega_i$ implicitly by
Lemma~\ref{lem:S_procedure}; see in particular \eqnok{eq:S.circ}.

A simple measure for the overlap between two spheres $\mathcal{E}_i$
and $\mathcal{E}_j$ is the diameter of the largest sphere inscribed
into the intersection, which we denote by an auxiliary variable $\xi_{ij}$:
\beq \label{eq:defxi} 
\xi_{ij} := \max (0, (r_i
+r_j)-\norm{c_i-c_j}_2), \qquad \xi := (\xi_{ij})_{1 \le i < j \le N}.
\eeq
Our minimum-overlap problem can thus be formulated as follows:
\begin{subequations}
\label{prob:min_overlap}
\begin{align}
\min_{c,\xi} \qquad&
H(\xi) 
\label{prob:min_overlap.1}
\\
\mbox{subject to} \qquad &
(r_i + r_j) - \norm{c_i - c_j}_2 \leq \xi_{ij}
&& \mbox{for }  1\leq i< j \leq N  
\label{prob:min_overlap.2}
\\
& 0 \leq \xi, 
\label{prob:min_overlap.3}
\\
& c_i \in \Omega_i, && 
\mbox{for }  i = 1,\dots , N,  
\label{prob:min_overlap.4}
\end{align}
\end{subequations}
where \eqnok{prob:min_overlap.3} denotes the entrywise condition
$\xi_{ij} \ge 0$, $1 \le i < j \le N$.  The objective $H:
\R^{n(n-1)/2}_+ \to \R_+$ satisfies the following assumption.
\begin{assumption} \label{ass:H} The function $H: \R^{n(n-1)/2}_+ \to
  \R_+$ is convex and continuous, with the following additional
  properties:
\begin{itemize}
\item[(a)] $H(0)=0$;
\item[(b)]  $H(\xi)>0$ whenever $\xi \neq 0$;
\item[(c)] $0 \le \bar{\xi} \le \xi \Rightarrow H(\bar{\xi}) \le H(\xi)$.
\end{itemize}
\end{assumption}

Assumption~\ref{ass:H} is satisfied, for example, by the norms
$H(\xi)=\|\xi\|_1$, $H(\xi)=\|\xi\|_2$, and $H(\xi) = \|
\xi\|_{\infty} = \max_{1 \le i < j \le N} \, |\xi_{ij}|$. In the
application to be discussed below, we prefer the overlaps in the
overlapping ellipsoids to be roughly the same size; for this purpose,
the $\ell_2$ and $\ell_{\infty}$ norms are the most appropriate. 

Although the objective \eqnok{prob:min_overlap.1} and containment
constraints \eqnok{prob:min_overlap.4} are convex, the problem
\eqnok{prob:min_overlap} is nonconvex, due to the constraints
\eqnok{prob:min_overlap.2}. A point is {\em Clarke-stationary} for
\eqnok{prob:min_overlap} if the following conditions are satisfied,
for some $\lambda_{ij} \in \R$, $1 \le i < j \le N$:
\begin{subequations}
\label{prob:min_overlap_stat}
\begin{align}
\label{prob:min_overlap_stat.1}
0 \le g_{ij} - \lambda_{ij}  \perp \xi_{ij} \ge 0 \;\; \mbox{for some} \; g_{ij} 
& \in \partial_{\xi_{ij}} H(\xi), && 1 \le i < j \le N, 
\\
\label{prob:min_overlap_stat.2}
\sum_{j=i+1}^{N} \lambda_{ij} w_{ij} - 
\sum_{j=1}^{i-1} \lambda_{ji} w_{ji} & \in N_{\Omega_i}(c_i), && i=1,2,\dotsc,N,
\\
\label{prob:min_overlap_stat.3}
0 \le \xi_{ij} + \norm{c_i-c_j}-(r_i+r_j) & \perp \lambda_{ij} \ge 0, 
&& 1 \le i < j \le N, 
\\
\label{prob:min_overlap_stat.4}
\mbox{where} \; \norm{w_{ij}}_2 \le 1, \quad \mbox{with} \;\; 
w_{ij} = \frac{c_i-c_j}{\norm{c_i-c_j}_2} \; & \mbox{when $c_i \neq c_j$},
&& 1 \le i < j \le N.
\end{align}
\end{subequations}
Condition \eqnok{prob:min_overlap_stat.4} defines $w_{ij}$ to be in 
the subdifferential of $\norm{c_i-c_j}_2$ with respect to $c_i$. See
\eqnok{def.normal} for the definition of the normal cone in
\eqnok{prob:min_overlap_stat.2}.

We now develop an algorithm that seeks a local solution of
\eqnok{prob:min_overlap}, by formulating a sequence of convex
approximations in which the key feature is linearization of the
nonconvex constraint \eqnok{prob:min_overlap.2} around the current
iterate. Because of the special properties of this problem, we need
not apply the usual safeguards for this successive approximation
approach, such as trust regions or line searches. Decrease of the
objective at each iteration and accumulation of the iteration sequence
at first-order points of the problem \eqnok{prob:min_overlap} can be
proved in the absence of these features. However, for purposes of stabilizing
the iterates generated by the method, it may be desirable to place a
uniform bound on the length of each step. This can be done without
complicating the analysis, and we do so in our implementations. 

The linearization of \eqnok{prob:min_overlap} around the current
iterate $c^-$ is defined as follows:
\begin{subequations}
\label{prob:min_overlap_lin}
\begin{align}
P(c^-) := \min_{c,\bar{\xi}} \; &
H(\bar{\xi}) 
\label{prob:min_overlap_lin.1}
\\
\mbox{subject to} \qquad &
(r_i + r_j) - z_{ij}^T (c_i - c_j) \leq \bar{\xi}_{ij},
\qquad \mbox{for }  1\leq i< j \leq N,
\label{prob:min_overlap_lin.2}
\\
& 0 \leq \bar{\xi}, 
\label{prob:min_overlap_lin.3}
\\
& c_i \in \Omega_i, \qquad
\mbox{for }  i = 1,\dots , N,  
\label{prob:min_overlap_lin.4}
\\
\mbox{where } \;\;
& z_{ij} := 
\begin{cases}
{(c_i^--c_j^-)^T}/{\norm{c_i^--c_j^-}} & \;\; \mbox{when $c_i^- \neq c_j^-$} \\
0 & \;\; \mbox{otherwise.}
\end{cases}
\label{prob:min_overlap_lin.5}
\end{align}
\end{subequations}
This problem is convex, with affine constraints except for the
inclusion \eqnok{prob:min_overlap_lin.4}, which can be satisfied
strictly when each $\Omega_i$ is closed, bounded, and convex, with
nonempty interior. Hence (see for example
\cite[Theorem~28.2, Corollary~28.3.1]{Roc70}), its solutions are
characterized by the following KKT conditions: There exist
$\lambda_{ij}$, $1 \le i < j \le N$ such that
\begin{subequations}
\label{prob:min_overlap_lin_kkt}
\begin{align}
\label{prob:min_overlap_lin_kkt.1}
0 \le g_{ij} - \lambda_{ij}  \perp \bar{\xi}_{ij} \ge 0 \;\; \mbox{for some} \; g_{ij} 
& \in \partial_{\bar{\xi}_{ij}} H(\bar{\xi}), && 1 \le i < j \le N, 
\\
\label{prob:min_overlap_lin_kkt.2}
\sum_{j=i+1}^{N} \lambda_{ij} z_{ij} - 
\sum_{j=1}^{i-1} \lambda_{ji} z_{ji} & \in N_{\Omega_i}(c_i), && i=1,2,\dotsc,N,
\\
\label{prob:min_overlap_lin_kkt.3}
0 \le \bar{\xi}_{ij} + z_{ij}^T(c_i-c_j)-(r_i+r_j) & \perp \lambda_{ij} \ge 0, 
&& 1 \le i < j \le N.
\end{align}
\end{subequations}
We can use a compactness argument to verify that solutions to
\eqnok{prob:min_overlap_lin} are attained. The vector of feasible
centers $c$ is restricted to a compact set, by the assumed properties
of $\Omega_1,\Omega_2,\dotsc,\Omega_N$. By using
\eqnok{prob:min_overlap_lin.2} we can define effective upper bounds on
the variables $\bar{\xi}_{ij}$ as follows:
\[
\bar{\xi}_{ij}' := \max  \left( 0,
\sup_{c_i \in \Omega_i, \, c_j \in \Omega_j} \,
(r_i + r_j) - z_{ij}^T (c_i - c_j) \right).
\] 
(For any feasible $c$, and given any $\bar{\xi}$ satisfying
\eqnok{prob:min_overlap_lin.2}, we can always replace $\bar{\xi}$ by
an alternative feasible point $\bar{\xi}'' \in [0,\bar{\xi}']$ without
increasing the value of $H$, by property (b) of
Assumption~\ref{ass:H}.) Thus, the problem
\eqnok{prob:min_overlap_lin} reduces to minimization of a continuous
convex function over a compact set, for which existence of a solution
is guaranteed.

\begin{algorithm}[h]
\caption{Packing Spheres by Minimizing Overlap\label{alg:circ}}
\begin{algorithmic}
\STATE Given $r_i$, $i=1,2,\dotsc,N$ and $\Omega_i$ closed, convex, bounded with nonempty interior;
\STATE Choose $c^0 \in \Omega_1 \times \Omega_2 \times \cdots \times \Omega_N$;
\FOR{$k=0,1,2,\dotsc$}
\STATE Solve $P(c^k)$ defined by \eqnok{prob:min_overlap_lin} 
to obtain $(c^{k+1},\bar{\xi}^{k+1})$;
\IF{$H(\bar{\xi}^{k+1})=H(\xi^k)$}
\STATE {\bf stop} and return $c^k$;
\ENDIF
\STATE Set $\xi_{ij}^{k+1} = \max(0, (r_i+r_j)-\norm{c_i^{k+1}-c_j^{k+1}} )$ for $1\le i < j \le N$;
\ENDFOR
\end{algorithmic}
\end{algorithm}

Algorithm~\ref{alg:circ} is the simple algorithm based on the
subproblem \eqnok{prob:min_overlap_lin}. To analyze convergence
properties of this method, we start with basic results about
stationary points and about the changes in $H$ at each iteration of
Algorithm~\ref{alg:circ}.
\begin{lemma} \label{lem:cstat} Suppose that the sets $\Omega_i$ in
  \eqnok{prob:min_overlap} are closed, bounded, and convex, with a
  nonempty interior, and that Assumption~\ref{ass:H} holds.  Then the
  following claims are true.
\begin{itemize}
\item[(i)] If the point $(c^k,\xi^k)$ satisfies the optimality
  conditions \eqnok{prob:min_overlap_lin_kkt} for the subproblem
  $P(c^k)$ defined by \eqnok{prob:min_overlap_lin}, then $(c^k,\xi^k)$
  satisfies the stationarity conditions \eqnok{prob:min_overlap_stat}
  for the problem \eqnok{prob:min_overlap}.
\item[(ii)] If the point $(c^k,\xi^k)$ satisfies the stationarity
  conditions \eqnok{prob:min_overlap_stat} for the problem
  \eqnok{prob:min_overlap} and in addition $c_i^k \neq c_j^k$ for all
  $1\le i < j \le N$, then $(c^k,\xi^k)$ satisfies the optimality
  conditions \eqnok{prob:min_overlap_lin_kkt} for the subproblem
  $P(c^k)$ defined by \eqnok{prob:min_overlap_lin}.
\item[(iii)] If $(c^k,\xi^k)$ does not satisfy the stationarity
  conditions \eqnok{prob:min_overlap_stat}, then
  $H(\bar{\xi}^{k+1}) < H(\xi^k)$.
\item[(iv)] For each $k$ we have $H(\xi^k) \le H(\bar{\xi}^k)$.
\end{itemize}
\end{lemma}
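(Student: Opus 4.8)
The plan is to handle the four claims in the order (i), (ii), (iv), (iii), since (iii) rests on (i) together with convexity of the subproblem, while the rest are largely a matter of matching two sets of optimality conditions. The common thread is the relationship between the linearization direction $z_{ij}$ appearing in $P(c^k)$ — which is built from $c^-=c^k$ — and the subgradient direction $w_{ij}$ appearing in the stationarity conditions. I would first record two elementary facts used throughout: since $z_{ij}$ is either a unit vector or zero, Cauchy--Schwarz gives $z_{ij}^T(c_i-c_j)\le\norm{c_i-c_j}$ for arbitrary centers; and at the linearization point itself the tangent is exact, $z_{ij}^T(c_i^k-c_j^k)=\norm{c_i^k-c_j^k}$ (this reads as $0=0$ in the degenerate case $c_i^k=c_j^k$). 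For claim (i), I would substitute $\bar\xi=\xi^k$ into \eqnok{prob:min_overlap_lin_kkt} and set $w_{ij}:=z_{ij}^T$. Then \eqnok{prob:min_overlap_lin_kkt.1} is literally \eqnok{prob:min_overlap_stat.1}; \eqnok{prob:min_overlap_lin_kkt.2} becomes \eqnok{prob:min_overlap_stat.2}; and the exact-tangent identity turns \eqnok{prob:min_overlap_lin_kkt.3} into \eqnok{prob:min_overlap_stat.3}. The choice $w_{ij}=z_{ij}^T$ satisfies \eqnok{prob:min_overlap_stat.4} in both cases, being the required unit vector when $c_i^k\neq c_j^k$ and zero (hence of norm $\le1$) when $c_i^k=c_j^k$. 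Claim (ii) is the reverse substitution, and here the hypothesis $c_i^k\neq c_j^k$ is precisely what is needed: it forces the stationarity direction to be $w_{ij}=(c_i^k-c_j^k)/\norm{c_i^k-c_j^k}=z_{ij}^T$, so that \eqnok{prob:min_overlap_stat.2}--\eqnok{prob:min_overlap_stat.3} collapse onto \eqnok{prob:min_overlap_lin_kkt.2}--\eqnok{prob:min_overlap_lin_kkt.3}.

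For claim (iv), I would establish the componentwise bound $\xi^k\le\bar\xi^k$ and then invoke the monotonicity property, Assumption~\ref{ass:H}(c). Since $(c^k,\bar\xi^k)$ solves $P(c^{k-1})$, the constraints \eqnok{prob:min_overlap_lin.2}--\eqnok{prob:min_overlap_lin.3} give $\bar\xi_{ij}^k\ge0$ and $\bar\xi_{ij}^k\ge (r_i+r_j)-z_{ij}^T(c_i^k-c_j^k)$ with $z_{ij}$ built from $c^{k-1}$. The Cauchy--Schwarz bound then yields $\bar\xi_{ij}^k\ge (r_i+r_j)-\norm{c_i^k-c_j^k}$, and combining with $\bar\xi_{ij}^k\ge0$ gives $\bar\xi_{ij}^k\ge\max(0,(r_i+r_j)-\norm{c_i^k-c_j^k})=\xi_{ij}^k$. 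As $0\le\xi^k\le\bar\xi^k$, Assumption~\ref{ass:H}(c) delivers $H(\xi^k)\le H(\bar\xi^k)$.

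Claim (iii) follows by a contrapositive argument. I would first verify that $(c^k,\xi^k)$ is feasible for $P(c^k)$: the exact-tangent identity makes \eqnok{prob:min_overlap_lin.2} hold at $c=c^k$, $\bar\xi=\xi^k$ (since $\xi_{ij}^k\ge (r_i+r_j)-\norm{c_i^k-c_j^k}=(r_i+r_j)-z_{ij}^T(c_i^k-c_j^k)$), while the remaining constraints hold by the definition of $\xi^k$ and feasibility of $c^k$. Hence $H(\bar\xi^{k+1})=P(c^k)\le H(\xi^k)$. For the strict inequality, suppose $(c^k,\xi^k)$ fails the stationarity conditions \eqnok{prob:min_overlap_stat}; by the contrapositive of claim (i) it then fails the KKT conditions \eqnok{prob:min_overlap_lin_kkt} for the convex problem $P(c^k)$. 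Because $P(c^k)$ is convex and satisfies a Slater condition (the $\Omega_i$ have nonempty interior and the remaining constraints are affine), a feasible point is a minimizer if and only if it is KKT; thus $(c^k,\xi^k)$ is not a minimizer, so its objective strictly exceeds the optimal value, giving $H(\xi^k)>P(c^k)=H(\bar\xi^{k+1})$.

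I expect the main obstacle to be the careful bookkeeping in (i)--(ii): isolating exactly why the non-coincidence hypothesis is indispensable for (ii) but harmless for (i). The point is that when $c_i^k=c_j^k$ the stationarity condition \eqnok{prob:min_overlap_stat.4} leaves $w_{ij}$ free in the unit ball, whereas $P(c^k)$ pins $z_{ij}=0$; so one can always match stationarity to a valid KKT triple by \emph{choosing} $w_{ij}=z_{ij}^T=0$ (claim (i)), but one cannot run the implication backwards at a coincident pair (claim (ii)). The analytic content is otherwise light: everything reduces to the two elementary facts about $z_{ij}$, the convexity of $P(c^k)$, and the monotonicity of $H$.
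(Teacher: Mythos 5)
Your proposal is correct and takes essentially the same route as the paper's own proof: parts (i)--(ii) are handled by identifying $w_{ij}$ with $z_{ij}$ (with $w_{ij}=0$ at coincident centers, which is exactly why the non-coincidence hypothesis is needed only for (ii)), part (iii) combines feasibility of $(c^k,\xi^k)$ for $P(c^k)$ with the contrapositive of (i) and the KKT characterization of solutions of the convex subproblem, and part (iv) uses $\norm{z_{ij}}_2\le 1$ to get $\xi^k\le\bar{\xi}^k$ componentwise and then invokes Assumption~\ref{ass:H}(c). Your explicit statement of the exact-tangent identity $z_{ij}^T(c_i^k-c_j^k)=\norm{c_i^k-c_j^k}$ merely spells out what the paper leaves implicit.
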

{\em Proof}.
\begin{itemize}
\item[(i)] If $(c^k,\xi^k)$ satisfies the optimality conditions
  \eqnok{prob:min_overlap_lin_kkt} for $P(c^k)$, then by setting
  $w_{ij}=z_{ij}$ in \eqnok{prob:min_overlap_stat}, we see that these
  conditions are also satisfied with the same values of $g_{ij}$ and
  $\lambda_{ij}$. (We have made the particular choice $w_{ij}=0$ when
  $c_i^k=c_j^k$.) 
\item[(ii)] If the conditions
\eqnok{prob:min_overlap_stat} are satisfied at
  $(c^k,\xi^k)$ with $c_i^k \neq c_j^k$ for all $i$, $j$ with $1\le i
  < j \le N$, then $w_{ij} = (c_i^k-c_j^k)/\norm{c_i^k-c_j^k}$ for all
  such $i$, $j$. Thus by noting that $z_{ij}=w_{ij}$ for all such $i$,
  $j$, we can verify using the same values of $g_{ij}$ and
  $\lambda_{ij}$ that $(c^k,\xi^k)$ satisfies the optimality
  conditions \eqnok{prob:min_overlap_lin_kkt}, and therefore is a solution
  of $P(c^k)$.
\item[(iii)] Note that the point $(c^k,\xi^k)$ is feasible for the
  subproblem \eqnok{prob:min_overlap_lin} (with $c^-=c^k$), so its
  optimal objective satisfies $H(\bar{\xi}^{k+1}) \le H(\xi^k)$. Since
  $(c^k,\xi^k)$ does not satisfy the stationarity conditions
  \eqnok{prob:min_overlap_stat}, however, part (i) implies that it
  cannot be a {\em solution} of \eqnok{prob:min_overlap_lin}, which
  implies that in fact $H(\bar{\xi}^{k+1}) < H(\xi^k)$, as claimed.
\item[(iv)] By using the fact that $\norm{z_{ij}^{k-1}}_2 \le 1$, we have for
  all $k \ge 1$ and all $i$, $j$ with $1 \le i < j \le N$ that
\[
\xi_{ij}^k = \max(r_i+r_j-\norm{c_i^k-c_j^k}_2,0)
\le \max(r_i+r_j - (z_{ij}^{k-1})^T(c_i^k-c_j^k),0) 
\le \bar{\xi}_{ij}^k.
\]
The result now follows immediately from Assumption~\ref{ass:H}(c). \endproof
\end{itemize}

Note that in the case of coinciding centers, i.e.~$c_i=c_j$ for some
$i \neq j$, the stationarity conditions for \eqnok{prob:min_overlap}
and \eqnok{prob:min_overlap_lin} are not equivalent. This observation
yields the intriguing property --- unusual in algorithms based on
linear approximations --- that Algorithm~\ref{alg:circ} may be able to
move away from a stationarity point for \eqnok{prob:min_overlap}. That
is, if $(c^k,\xi^k)$ satisfies \eqnok{prob:min_overlap_stat} but there
is some pair $(i,j)$ with $i \neq j$ and $c_i^k=c_j^k$, then by
setting $z_{ij}=0$, the subproblem \eqnok{prob:min_overlap_lin_kkt}
may yield a solution $(c^{k+1},\bar{\xi}^{k+1})$ with
$H(\bar{\xi}^{k+1}) < H(\xi^k)$, and thus (by Lemma~\ref{lem:cstat}
(iv)) the next iterate will satisfy $H(\xi^{k+1}) < H(\xi^k)$.  Note
too that the proof of Lemma~\ref{lem:cstat} (iv) still holds if
$z_{ij}^k$ is chosen to be {\em any} vector with $\norm{z_{ij}^k}_2
\le 1$ when $c_i^k=c_j^k$. Hence, random choices for $z_{ij}^k$ in
this situation could be used in place of our choice $z_{ij}^k=0$
above, leading to some interesting algorithmic possibilities for
avoiding coincident centers and moving away from stationary points.
Since coincident centers rarely arise in the cases of interest,
however, we do not pursue these possibilities.

We now prove the main convergence result for Algorithm~\ref{alg:circ}.

\begin{theorem} \label{th:circ} Suppose that the sets $\Omega_i$ in
  \eqnok{prob:min_overlap} are closed, bounded, and convex, with a
  nonempty interior, and that Assumption~\ref{ass:H} holds. Then
  Algorithm~\ref{alg:circ} either terminates at a stationary point for
  \eqnok{prob:min_overlap}, or else  generates an infinite sequence
  $\{ c^k \}$ for which all accumulation points $\hat{c}$ are either
  stationary points for \eqnok{prob:min_overlap}, or else have
  $\hat{c}_i=\hat{c}_j$ for some pair $(i,j)$ with $1 \le i < j \le
  N$.
\end{theorem}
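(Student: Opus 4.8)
The plan is to split the argument into the two regimes named in the statement---finite termination and an infinite iteration sequence---and in each case to exploit the monotonicity of $H$ along the iterates together with a continuity property of the subproblem value function $P(\cdot)$. First I would dispose of the termination case. If Algorithm~\ref{alg:circ} stops at iteration $k$, then $H(\bar\xi^{k+1})=H(\xi^k)$. Lemma~\ref{lem:cstat}(iii) asserts that whenever $(c^k,\xi^k)$ fails the stationarity conditions \eqnok{prob:min_overlap_stat} one has the strict decrease $H(\bar\xi^{k+1})<H(\xi^k)$; taking the contrapositive, the equality at termination forces $(c^k,\xi^k)$ to be stationary for \eqnok{prob:min_overlap}, as required.

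For the infinite case I would first establish that $\{H(\xi^k)\}$ is nonincreasing and convergent. Feasibility of $(c^k,\xi^k)$ for $P(c^k)$ gives $H(\bar\xi^{k+1})\le H(\xi^k)$ (as in the proof of Lemma~\ref{lem:cstat}(iii)), while Lemma~\ref{lem:cstat}(iv) gives $H(\xi^{k+1})\le H(\bar\xi^{k+1})$; chaining these yields $H(\xi^{k+1})\le H(\bar\xi^{k+1})\le H(\xi^k)$. Since $H\ge 0$ by Assumption~\ref{ass:H}, the monotone sequence $\{H(\xi^k)\}$ converges to some limit $H^\ast$, and the squeeze shows $H(\bar\xi^{k+1})\to H^\ast$ as well. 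Now let $\hat c$ be an accumulation point along a subsequence $\mathcal K$, set $\hat\xi_{ij}=\max(0,(r_i+r_j)-\norm{\hat c_i-\hat c_j})$, and note by continuity of $H$ and of the overlap map $c\mapsto\xi$ that $H(\hat\xi)=H^\ast$, while closedness of $\Omega_i$ gives $\hat c_i\in\Omega_i$. It remains to show that if $\hat c_i\neq\hat c_j$ for every pair, then $(\hat c,\hat\xi)$ is stationary.

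The crux is to prove that $(\hat c,\hat\xi)$ \emph{solves} $P(\hat c)$; Lemma~\ref{lem:cstat}(i) then delivers stationarity. Since all centers are distinct, the data map $c^-\mapsto z_{ij}$ of \eqnok{prob:min_overlap_lin.5} is continuous at $\hat c$, so $z_{ij}^k\to\hat z_{ij}$ along $\mathcal K$, and using $\hat z_{ij}^T(\hat c_i-\hat c_j)=\norm{\hat c_i-\hat c_j}$ one checks that $(\hat c,\hat\xi)$ is feasible for $P(\hat c)$. Suppose, for contradiction, that it is not optimal, so $P(\hat c)<H(\hat\xi)=H^\ast$. I would then argue that the value $P$ depends continuously on the data $z=(z_{ij})$ via Berge's maximum theorem: the feasible-set correspondence is compact-valued (the centers lie in the compact product $\prod_i\Omega_i$ and the $\bar\xi_{ij}$ admit uniform upper bounds since $\norm{z_{ij}}_2\le 1$ and each $\Omega_i$ is bounded), it is upper semicontinuous by continuity of the affine constraints, and it is lower semicontinuous because, given any feasible $(c,\bar\xi)$ for the limiting data, one may hold $c$ fixed and set $\bar\xi_{ij}^k=\max(\bar\xi_{ij},\,(r_i+r_j)-(z_{ij}^k)^T(c_i-c_j))\to\bar\xi_{ij}$ to recover feasibility for nearby data. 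Continuity of $H$ then makes $P$ continuous in $z$, hence $P(c^k)\to P(\hat c)$ along $\mathcal K$. But $P(c^k)=H(\bar\xi^{k+1})\to H^\ast$, forcing $P(\hat c)=H^\ast$ and contradicting $P(\hat c)<H^\ast$. Thus $(\hat c,\hat\xi)$ solves $P(\hat c)$ and is stationary.

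The main obstacle is the continuity of the parametric value function $P$; the only delicate point there is lower semicontinuity of the feasible-set correspondence, which I expect to settle by the explicit $\bar\xi$-adjustment above rather than by invoking a general constraint-qualification theorem. It is worth observing that the discontinuity of the map $z(\cdot)$ at coincident centers is precisely what breaks this argument when some $\hat c_i=\hat c_j$, which is exactly why those accumulation points must be excluded in the statement.
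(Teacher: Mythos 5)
Your proof is correct, but the main (infinite-sequence) part takes a genuinely different route from the paper's. The paper argues by contradiction through unbounded descent: assuming an accumulation point $\hat{c}$ with distinct centers is \emph{not} stationary, it invokes Lemma~\ref{lem:cstat}(iii) at $\hat{c}$ to get a positive gap $\epsilon = H(\hat{\xi}) - H(\bar{\xi})$, then uses continuity of the subproblem's optimal value near $\hat{c}$ (asserted, not proved) to force a decrease of at least $\epsilon/2$ every time the iterates enter a neighborhood of $\hat{c}$; since this happens infinitely often and $\{H(\xi^k)\}$ is monotone, $H(\xi^k) \downarrow -\infty$, contradicting $H \ge 0$. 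You instead argue directly: monotone convergence $H(\xi^k) \downarrow H^\ast$ plus the squeeze gives $H(\bar{\xi}^{k+1}) \to H^\ast$; feasibility of $(\hat{c},\hat{\xi})$ in $P(\hat{c})$ plus continuity of the value function yields $P(\hat{c}) = H^\ast = H(\hat{\xi})$, so the accumulation point \emph{solves its own subproblem}, hence satisfies the KKT conditions \eqnok{prob:min_overlap_lin_kkt} (available because the Slater condition holds, as the paper notes), and Lemma~\ref{lem:cstat}(i) delivers stationarity. Both arguments pivot on the same continuity-of-the-value-function observation at points with distinct centers, and both break down at coincident centers for the same reason (discontinuity of $z(\cdot)$). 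What your route buys is rigor precisely where the paper is thinnest: the paper simply asserts that the optimal objective of \eqnok{prob:min_overlap_lin} is continuous in $c^-$, whereas you supply the Berge maximum-theorem argument, including the one genuinely delicate step --- lower semicontinuity of the feasible-set correspondence via the explicit adjustment $\bar{\xi}_{ij}^k = \max\bigl(\bar{\xi}_{ij}, (r_i+r_j)-(z_{ij}^k)^T(c_i-c_j)\bigr)$ --- together with the uniform truncation of $\bar{\xi}$ (justified by Assumption~\ref{ass:H}(c)) needed for compact-valuedness. The cost is a longer proof; the paper's descent contradiction is more economical once continuity is granted. One small remark: your contradiction framing in the last step is superfluous, since your argument in fact establishes $P(\hat{c}) = H^\ast$ unconditionally, from which optimality of $(\hat{c},\hat{\xi})$ follows immediately.
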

\begin{proof}
  Lemma~\ref{lem:cstat} (iii) says that termination can occur only if
  $(c^k,\xi^k)$ satisfies the stationarity conditions
  \eqnok{prob:min_overlap_stat}.  Hence, we need to consider only the
  case of an infinite sequence of iterates $\{ c^k \}$.  Suppose for
  contradiction that there is an accumulation point $\hat{c}$ for this
  sequence such that $\hat{c}_i \neq \hat{c}_j$ for all $(i,j)$ but
  $\hat{c}$ is not stationary for
  \eqnok{prob:min_overlap}. Considering the problem $P(\hat{c})$
  defined by \eqnok{prob:min_overlap_lin}, we have by
  Lemma~\ref{lem:cstat} (iii) that $\epsilon := H(\hat{\xi}) -
  H(\bar{\xi})>0$ (strict inequality), where $\hat{\xi}_{ij} = \max
  (0, r_i+r_j-\|\hat{c}_i-\hat{c}_j\|)$. Moreover, we can identify a
  neighborhood $\cN$ of $\bar{c}$ such that for all $c^k \in \cN$, we
  have
\beq \label{eq:hdecr}
H(\xi^{k+1}) \le H(\bar{\xi}^{k+1}) < H(\xi^k) - \epsilon/2, 
\eeq
This claim follows from Lemma~\ref{lem:cstat} (iv) and the observation that
the optimal objective in \eqnok{prob:min_overlap_lin} is a continuous
function of $c^-$, for $c^-$ near $\hat{c}$. The face that $\hat{c}_i \neq
\hat{c}_j$ for all $(i,j)$ ensures that the $z_{ij}$ are continuous
functions of $c^-$, while $H$ itself is continuous by
Assumption~\ref{ass:H}. Since there is a subsequence $\cS$ with
$\lim_{k \in \cS} c^k=\hat{c}$, we have from \eqnok{eq:hdecr} and
monotonicity of the full sequence $\{ H(\xi^k) \}$ that $H(\xi^k)
\downarrow -\infty$. This is impossible, however, since $H$ is bounded
below by $0$. We conclude therefore that all accumulation points
$\hat{c}$ are either stationary or else have $\hat{c}_i=\hat{c}_j$ for
some pair $(i,j)$, as required. 
\end{proof}

As noted above, the case in which accumulation points have coincident
centers is exceptional, so Theorem~\ref{th:circ} shows that the
algorithm usually either terminates or accumulates at stationary
points.

\subsection{Examples} \label{sec:sphere:examples}

We present several examples showing results obtained with
Algorithm~\ref{alg:circ} on various problems, and compare them with
known results. To begin, a simple example to demonstrate the
existence of local minima that are not global minima.

\begin{figure}[!t]
\centering
\subfigure[Global Solution: $o=.4122147478$]{\includegraphics[scale=0.32]{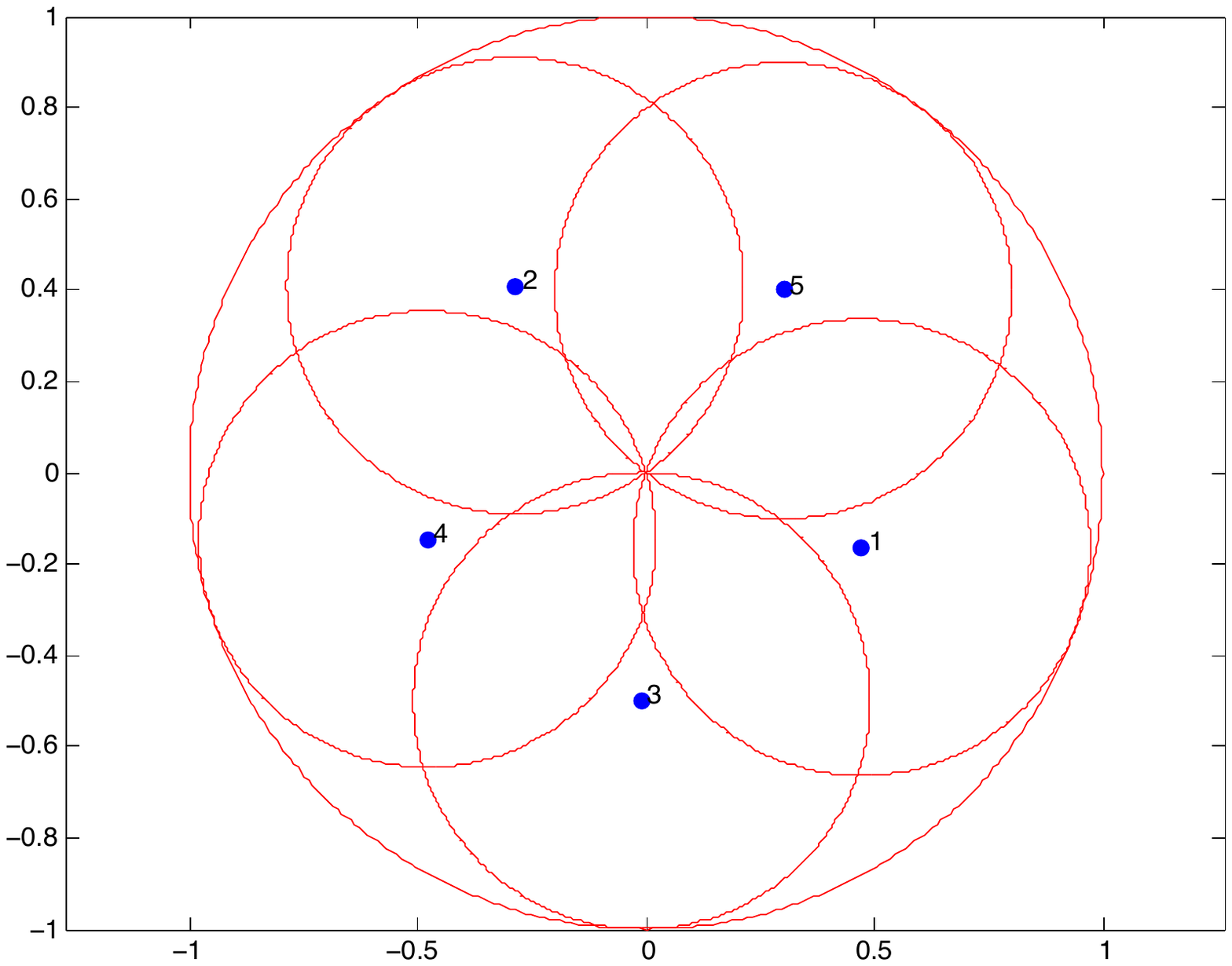}\label{fig:circles5:0}} \;
\subfigure[Local Solution: $o=.5$]{\includegraphics[scale=0.32]{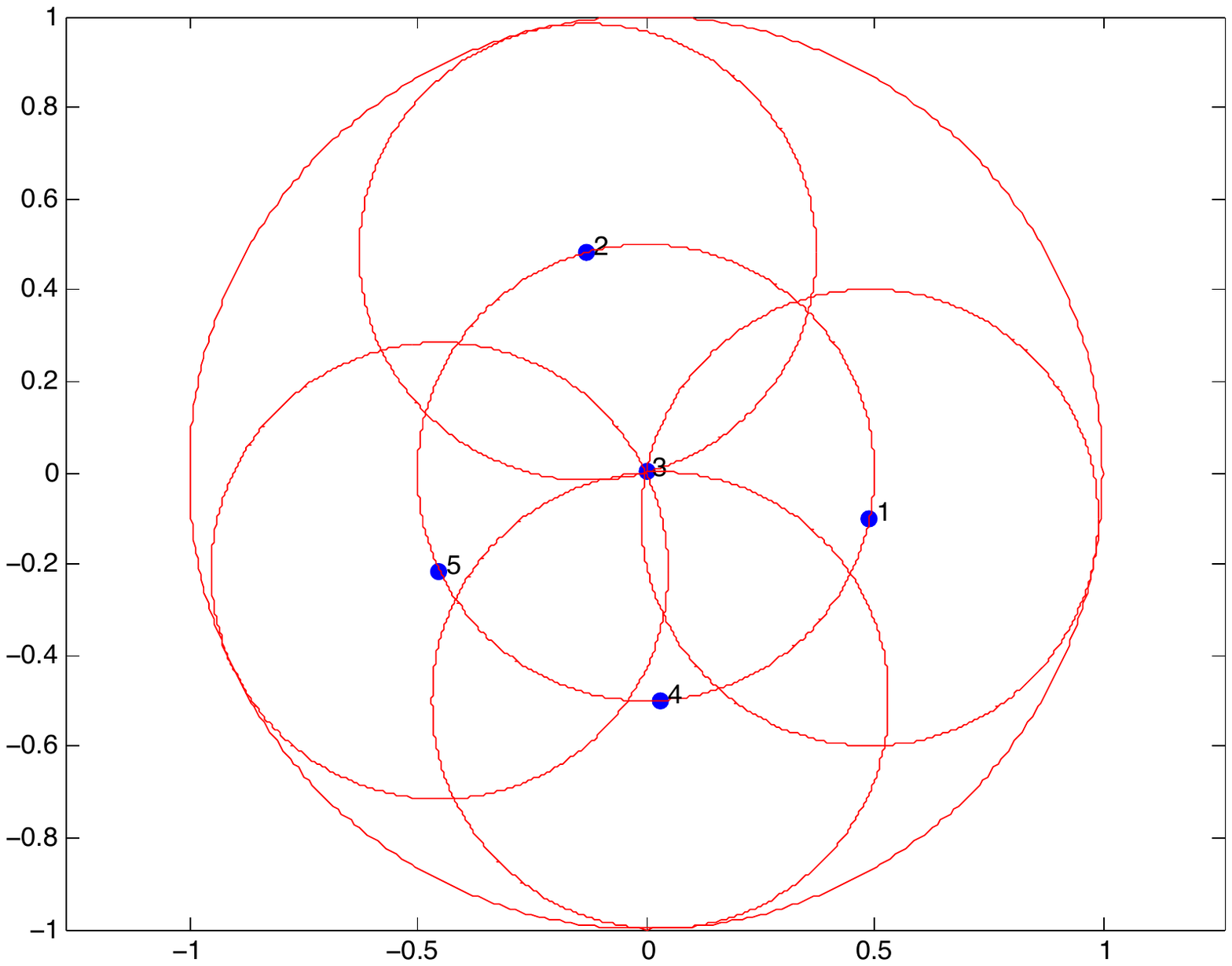}\label{fig:circles5:1}} \;
\subfigure[Local Solution: $o=.5$]{\includegraphics[scale=0.32]{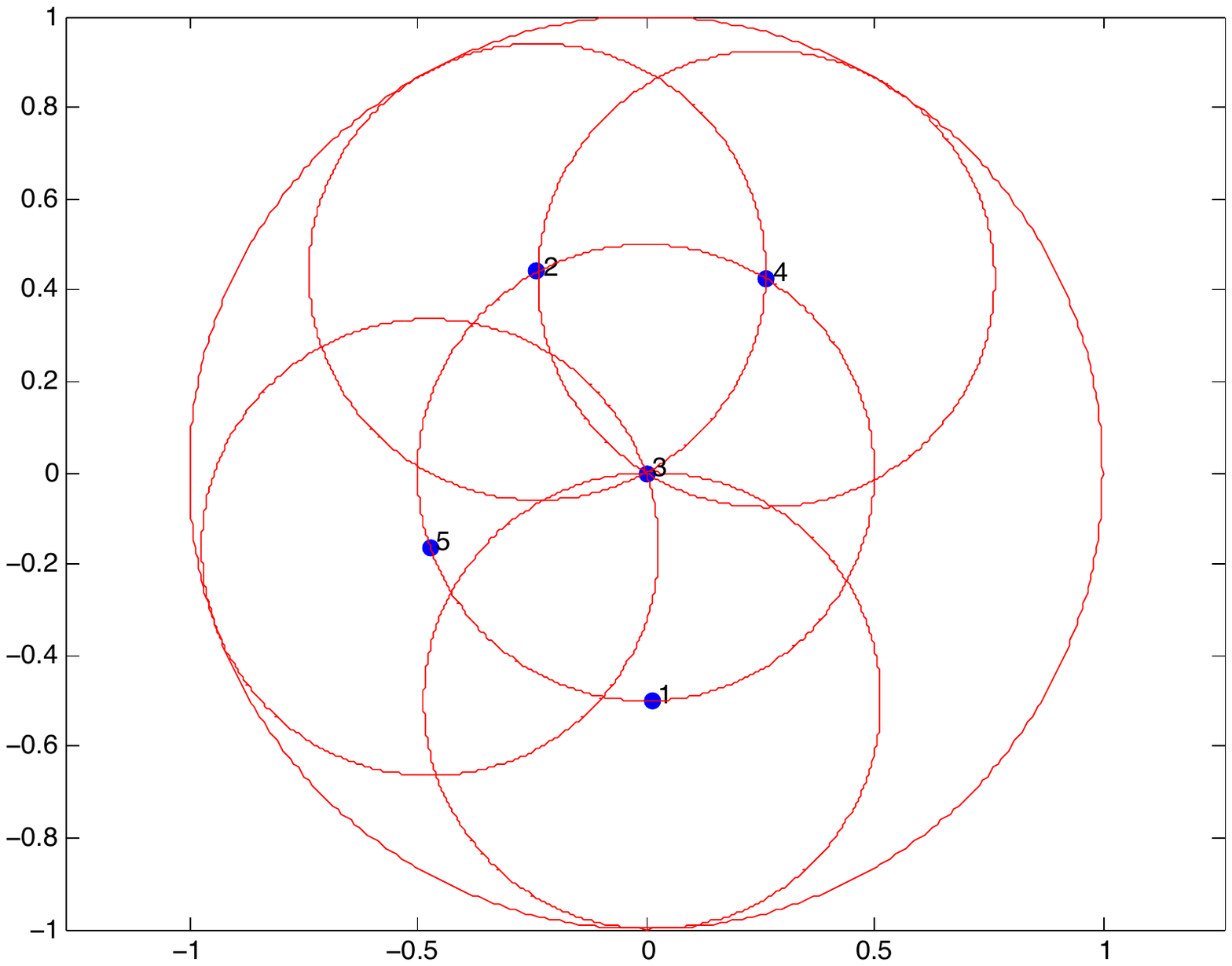}\label{fig:circles5:2}}
\label{fig:circles5}
\caption{Solutions obtained by Algorithm~\ref{alg:circ} for packing
  circles of radius $.5$ into a circle of radius $1$, showing final
  overlap measures for each.}
\end{figure}

\begin{Exa}[Five Circles]
  Consider the problem of packing five circles of radius .5 into an
  enclosing circle of radius 1. Results obtained with
  Algorithm~\ref{alg:circ}, with objective $H(\xi) =
  \|\xi\|_{\infty}$, from random starting points reveal an apparent
  global solution (Figure~\ref{fig:circles5:0}) and a family of local
  solutions (Figures~\ref{fig:circles5:1} and
  \ref{fig:circles5:2}). The local solutions are characterized by one
  of the packed circles having its center at the center of the
  enclosing circle; this circle thus has an overlap of .5 with all
  four of the outer circles. The outer circles in this local solution
  need only be arranged so that their maximum pairwise overlap is no
  greater than .5. Algorithm~\ref{alg:circ} required only a few
  iterations for each of these examples.
\end{Exa}

\begin{figure}[!t]
\centering
\includegraphics[scale=0.43]{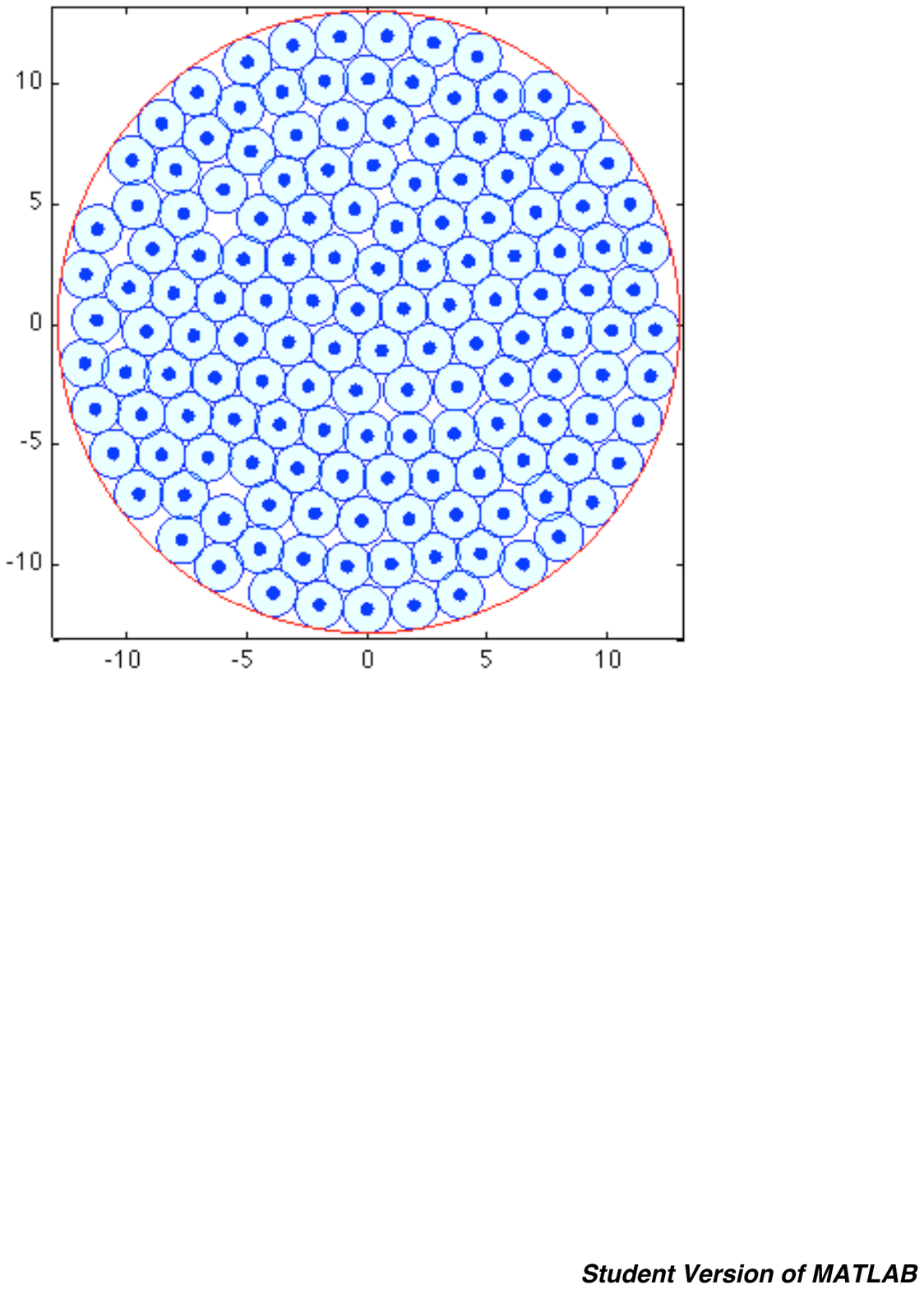} 
\caption{Circle packing in a circular enclosure. A nearly hexagonal
  arrangement is seen in the interior.}
\label{fig:circle_packing_circle}
\end{figure}

As noted in Section~\ref{sec:intro} optimal sphere packings
(configurations with no overlap) have been obtained in two and three
dimensions, for spaces of infinite extent.
Our algorithm can only solve problems with finite enclosing shapes,
but we can use large enclosures to investigate how similar the local
solutions attained by our algorithm are to the known optimal packings
in $\R^2$ (hexagonal lattice with density $\pi/\sqrt{12}$) and $\R^3$
(FCC lattice with density $\pi/\sqrt{18}$).

\begin{figure}[!b]
\centering
\subfigure[$o=.1192514295$]{\includegraphics[scale=0.29]{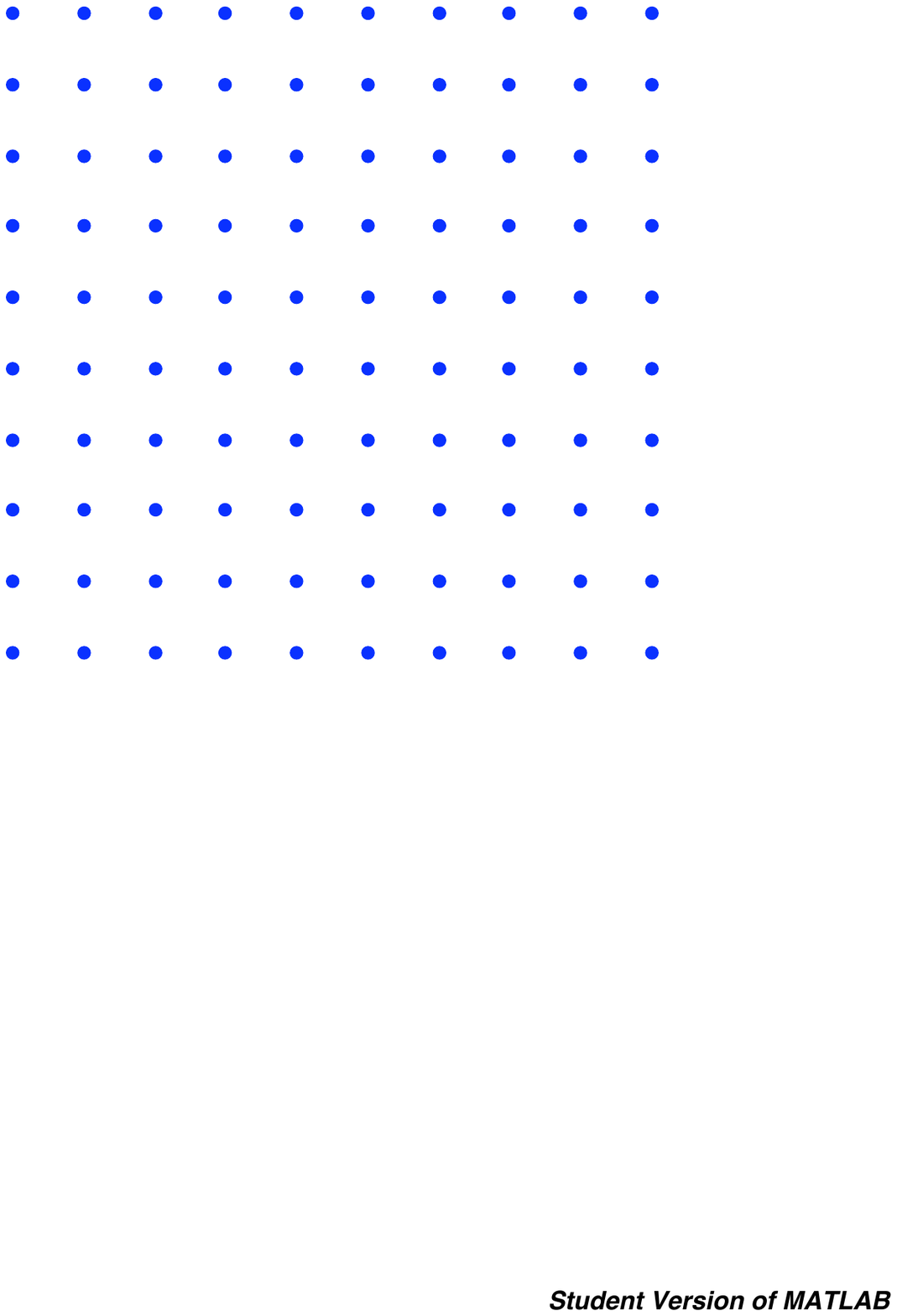}\label{fig:sphere_packing_in_square:0}} \;
\subfigure[$o=.1188906843$]{\includegraphics[scale=0.30]{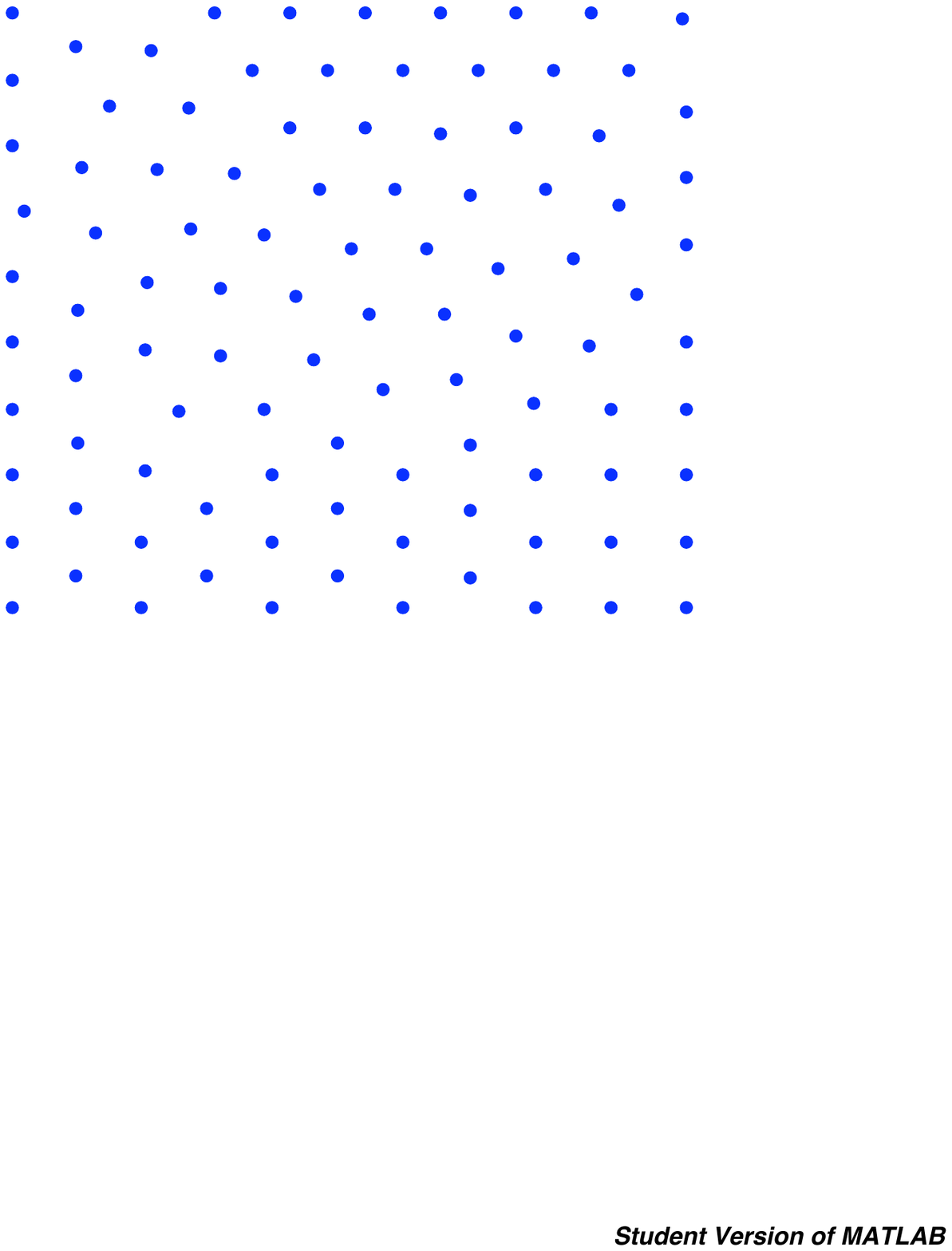}\label{fig:sphere_packing_in_square:1}} \;
\subfigure[$o=.1181440939$]{\includegraphics[scale=0.30]{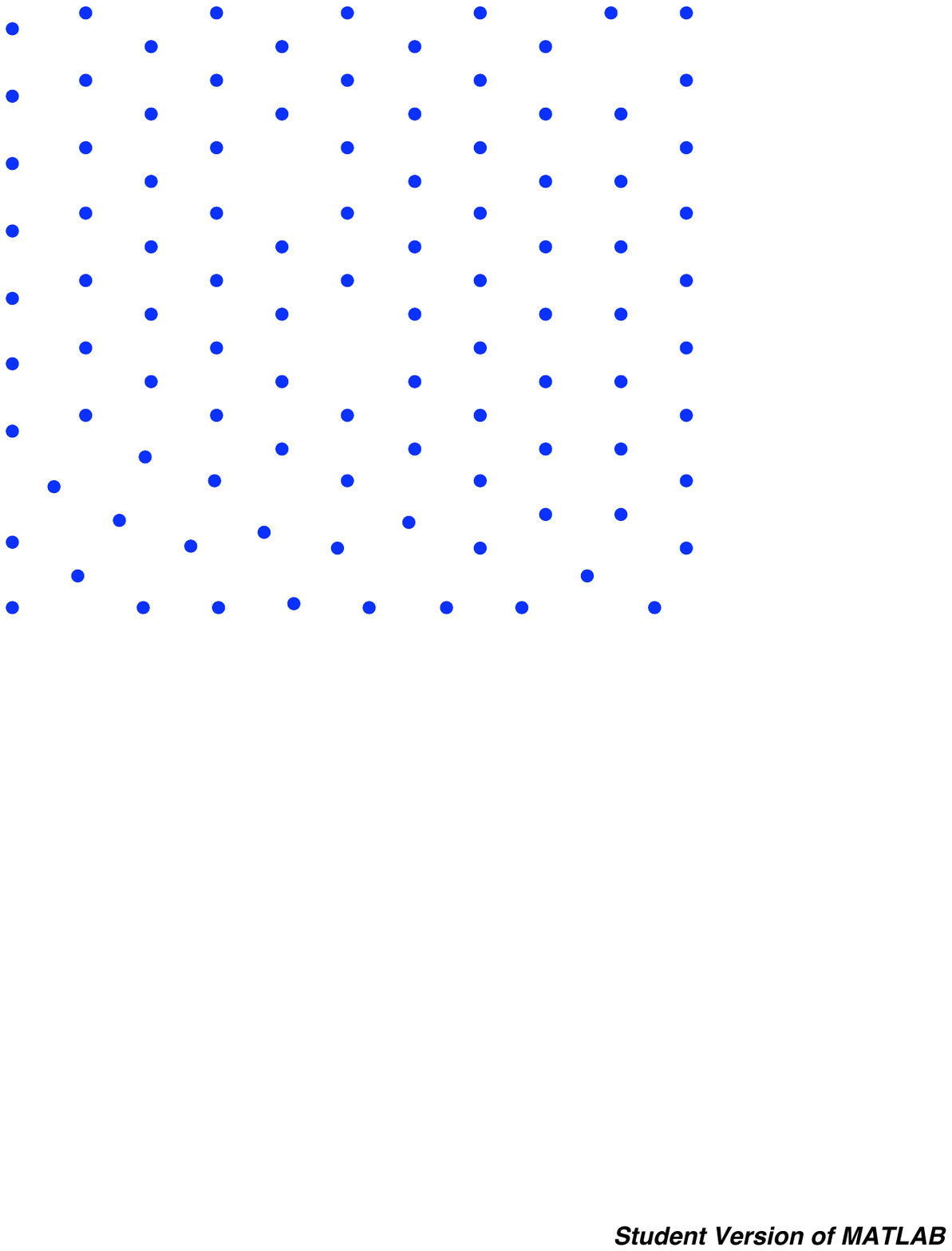}\label{fig:sphere_packing_in_square:2}} \;
\subfigure[$o=.1179656050$]{\includegraphics[scale=0.30]{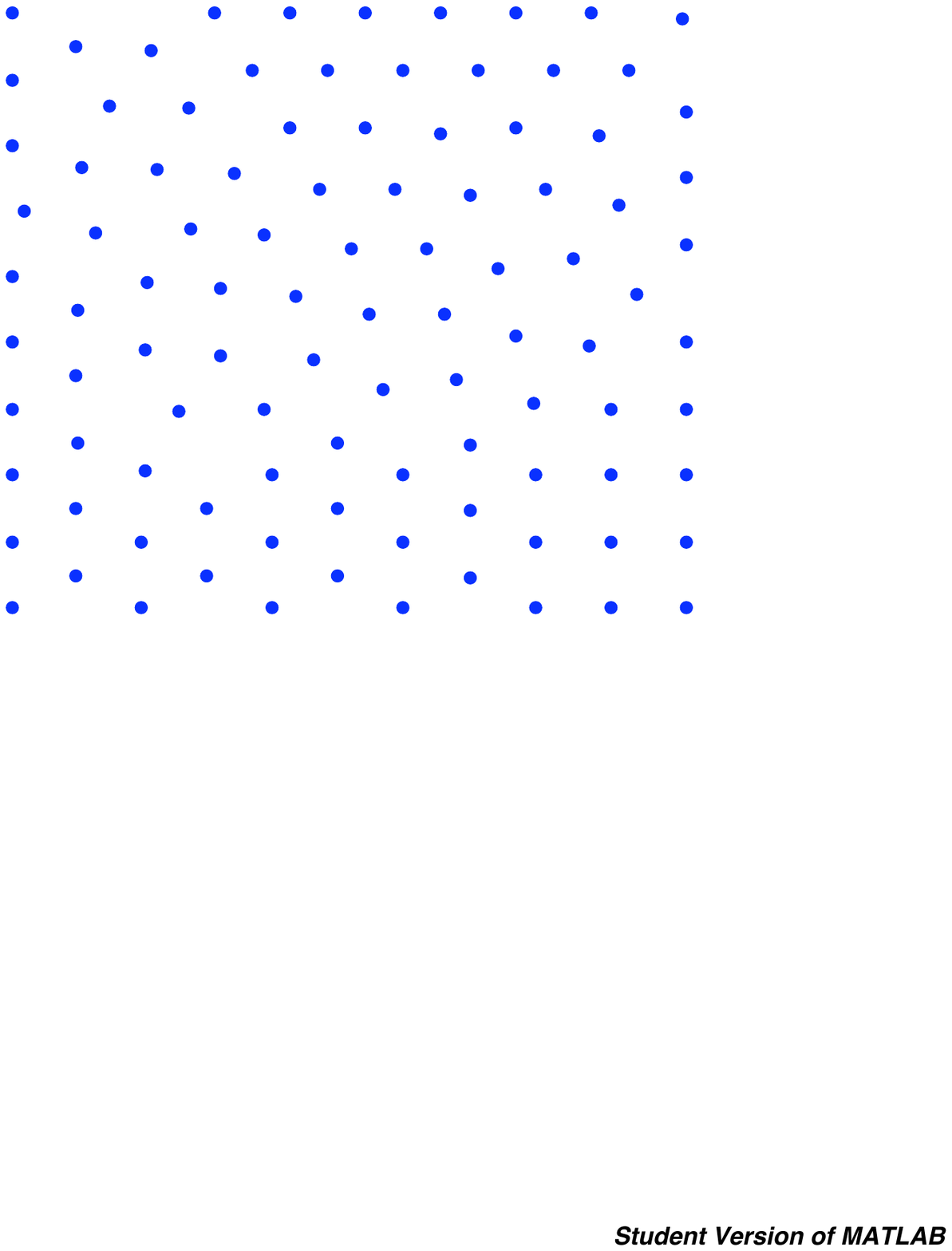}\label{fig:sphere_packing_in_square:3}}

\caption{Local minima obtained by Algorithm~\ref{alg:circ} for
    packing circles into a square, showing final overlap measures for each.\label{fig:sphere_packing_in_square}}
\end{figure}


\begin{Exa}[Uniform Circles in $\R^2$] \label{ex:circles}
  We ran Algorithm~\ref{alg:circ} with $N=150$ circles, each of area
  $\pi$, and a circular container of size $150\sqrt{12}$. This results
  in a total circle area-to-container area ratio which is equal to the
  optimal packing density. The resulting circle configuration is shown
  in Figure~\ref{fig:circle_packing_circle}. The hexagonal arrangement of the
  circles is clearly visible in the interior of the container.

  We also ran tests in which 100 circles are packed into a square
  container. (Rectangular feasible sets $\Omega_i$ are easily
  incorporated into the formulation by defining bound constraints on
  the centers $c_i$.) We generate starting points by arranging the
  centers in a $10 \times 10$ square lattice. We may then add a random
  perturbation to each center.  Results are shown in
  Figure~\ref{fig:sphere_packing_in_square}.  (For clarity, we show
  only the centers in this figure, omitting the circles.) When no
  perturbations are added to the starting configuration, the algorithm
  does not move from the initial square configuration shown in
  Figure~\ref{fig:sphere_packing_in_square:0}. When random initial
  perturbations are applied (large enough that the original square
  grid structure is not recognizable in the initial point), many
  different local minima are obtained. Three of these are shown in
  Figures~\ref{fig:sphere_packing_in_square:1},
  \ref{fig:sphere_packing_in_square:2}, and
  \ref{fig:sphere_packing_in_square:3}.  Note that all of these have a
  maximum overlap less than the square configuration, and that
  hexagonal structure is recognizable in large parts of the domain,
  with square structure and disorder in intermediate regions.
\end{Exa}

\begin{figure}[!b]
  \centering \subfigure[Distribution of neighbor
  counts.]{\includegraphics[scale=0.40]{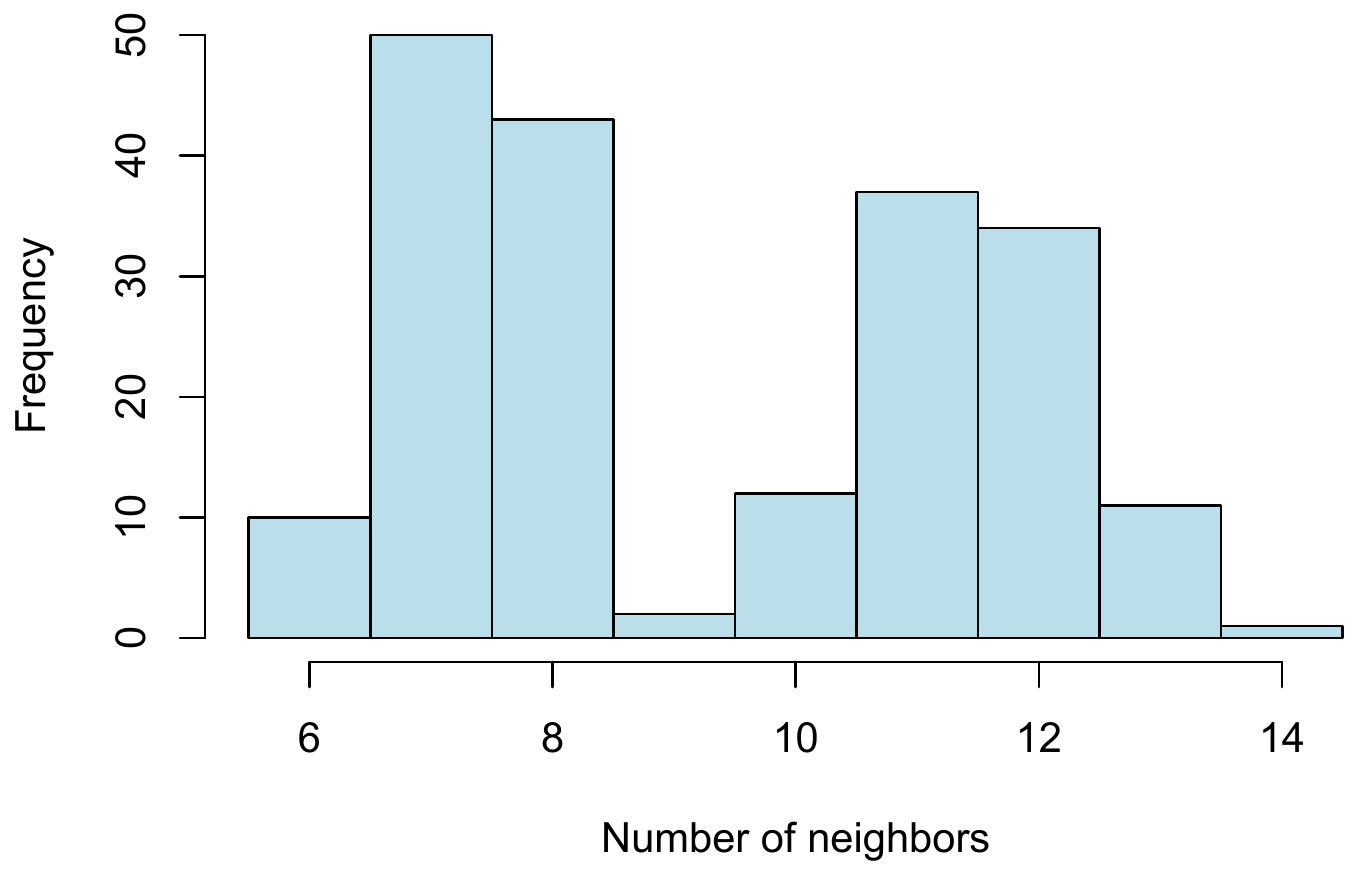}\label{fig:sphere_packing:1}}
  \;
\subfigure[Distribution of neighbor counts,
after spheres at the periphery have been
removed.]{\includegraphics[scale=0.40]{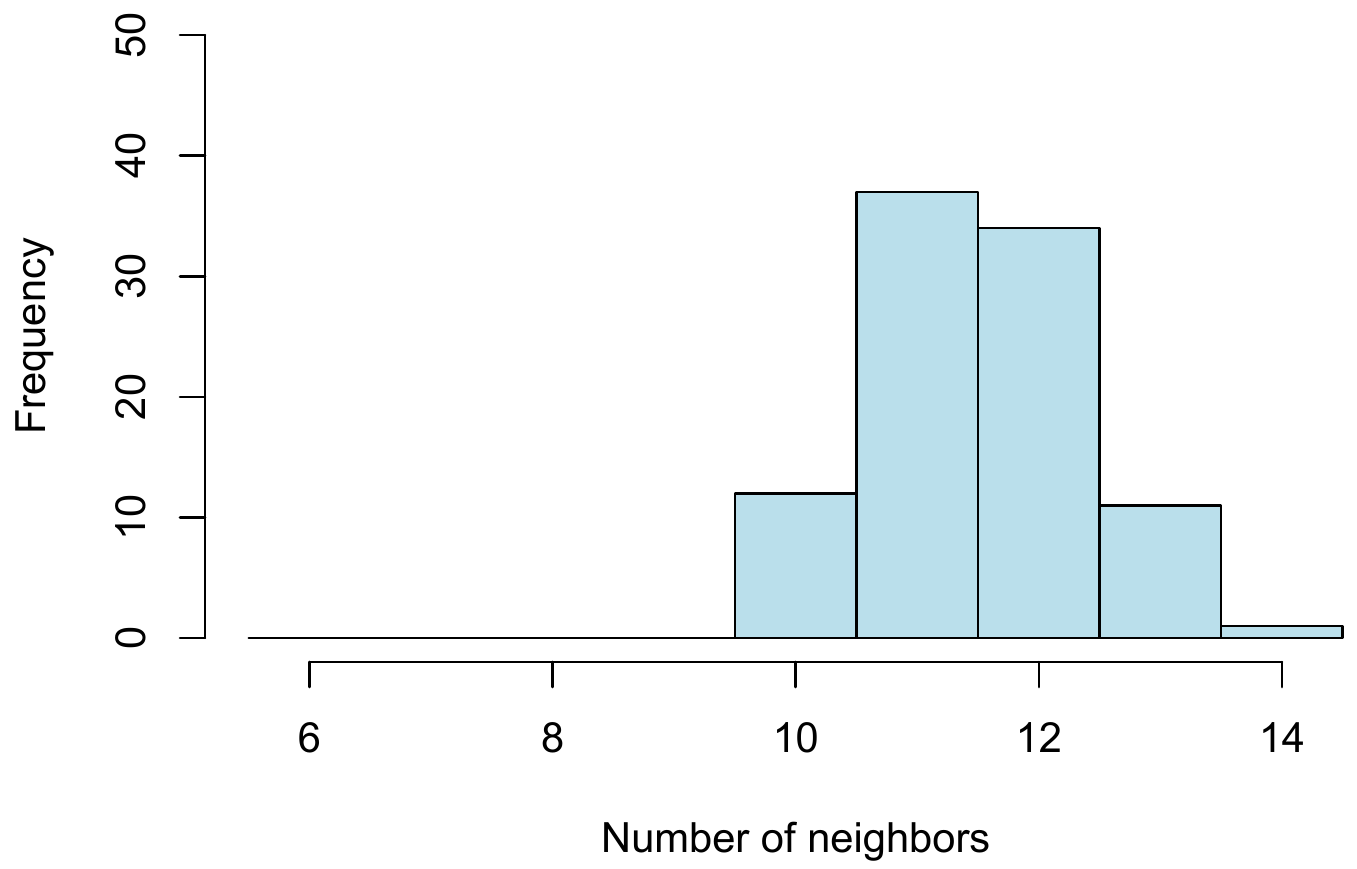}\label{fig:sphere_packing:2}}
\caption{Neighbor counts for packing of 100 three-dimensional spheres in  in a spherical container.}
\label{fig:sphere_packing}
\end{figure}

\begin{Exa}[Uniform spheres in $\R^3$] \label{ex:spheres}
%
%
We performed a similar test to Example~\ref{ex:circles} in three
dimensions. 
We checked to see whether Algorithm~\ref{alg:circ} converges to a
solution like the FCC lattice
in a finite minimum-overlap arrangement with 200 spheres enclosed in a
larger sphere. We chose the small spheres to have volume $\pi$ and the
containing sphere to have volume $200\sqrt{18}$, giving a density of
$\pi/\sqrt{18}$, identical to the FCC lattice, which is optimal in
infinite space. At the solution obtained by Algorithm~\ref{alg:circ},
we counted the number of spheres that touch or intersect each
sphere. This statistic provides an indication of the type of packing
attained, since the FCC lattice has 12 neighbors per sphere, while the
BCC lattice has only 8 neighbors per sphere. The histogram for the
number of neighboring spheres is shown in
Figure~\ref{fig:sphere_packing:1}. A more instructive diagram is
obtained by removing from consideration those spheres that touch the
enclosing sphere. After doing so, we obtain the histogram in Figure
\ref{fig:sphere_packing:2}. This figure suggests strongly that the
calculated solution is close to the FCC lattice over most of the
interior region of the domain.
\end{Exa}

Finally, we report on solutions obtained by Algorithm~\ref{alg:circ}
on packings of discs in a circle, for which the optimal packing is
known in only a few cases. In particular, we analyze packings with
equally sized discs, where the number of discs is given by a hexagonal
number, that is, 
\begin{equation} \label{eq:hex}
h(k)=3k(k+1)+1, \qquad k \geq 1.
\end{equation}

\begin{figure}
\centering
\subfigure[$37$ discs]{
\includegraphics[scale=0.22]{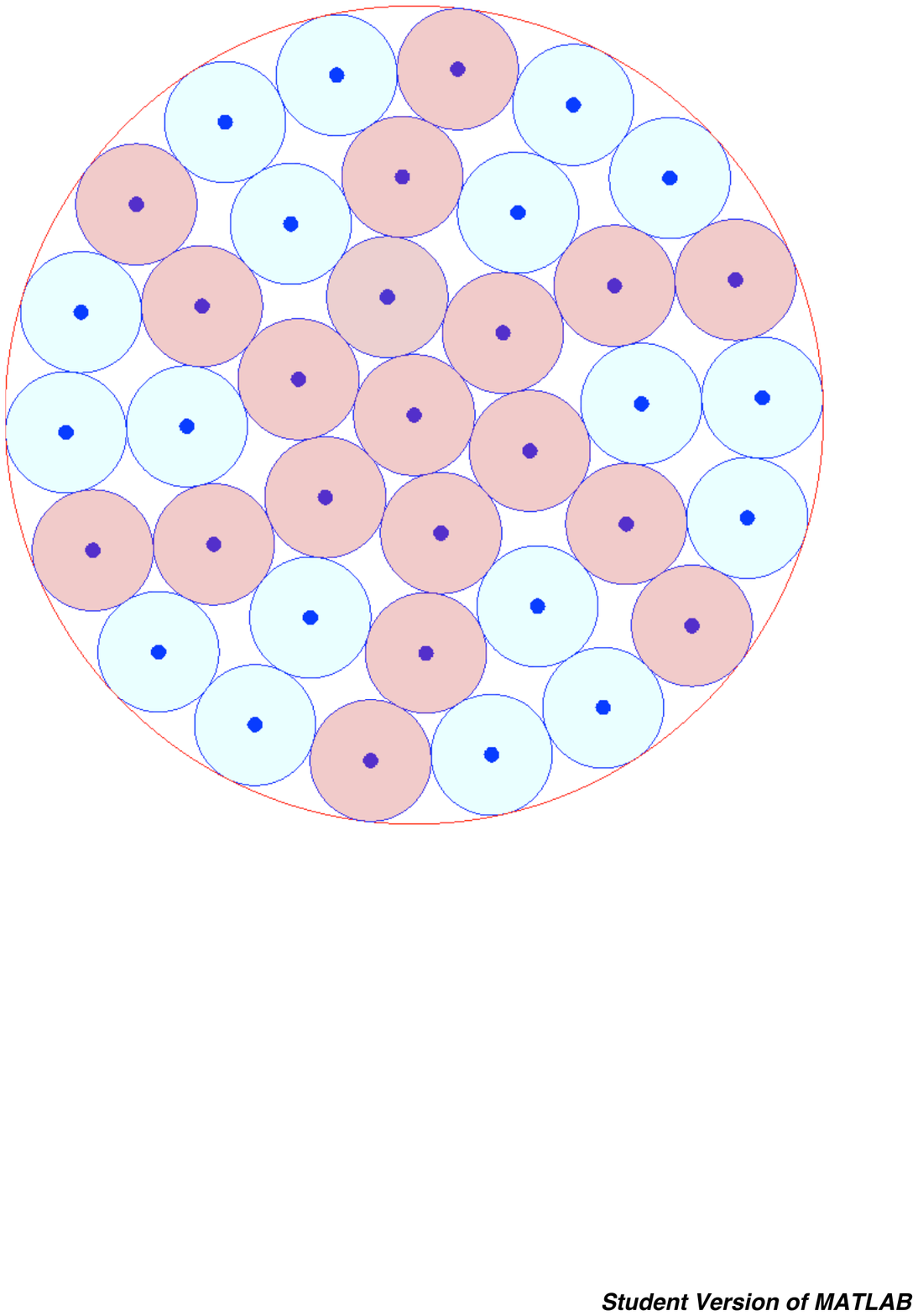}\label{fig:curved:1}} \;
\subfigure[$61$ discs]{
\includegraphics[scale=0.22]{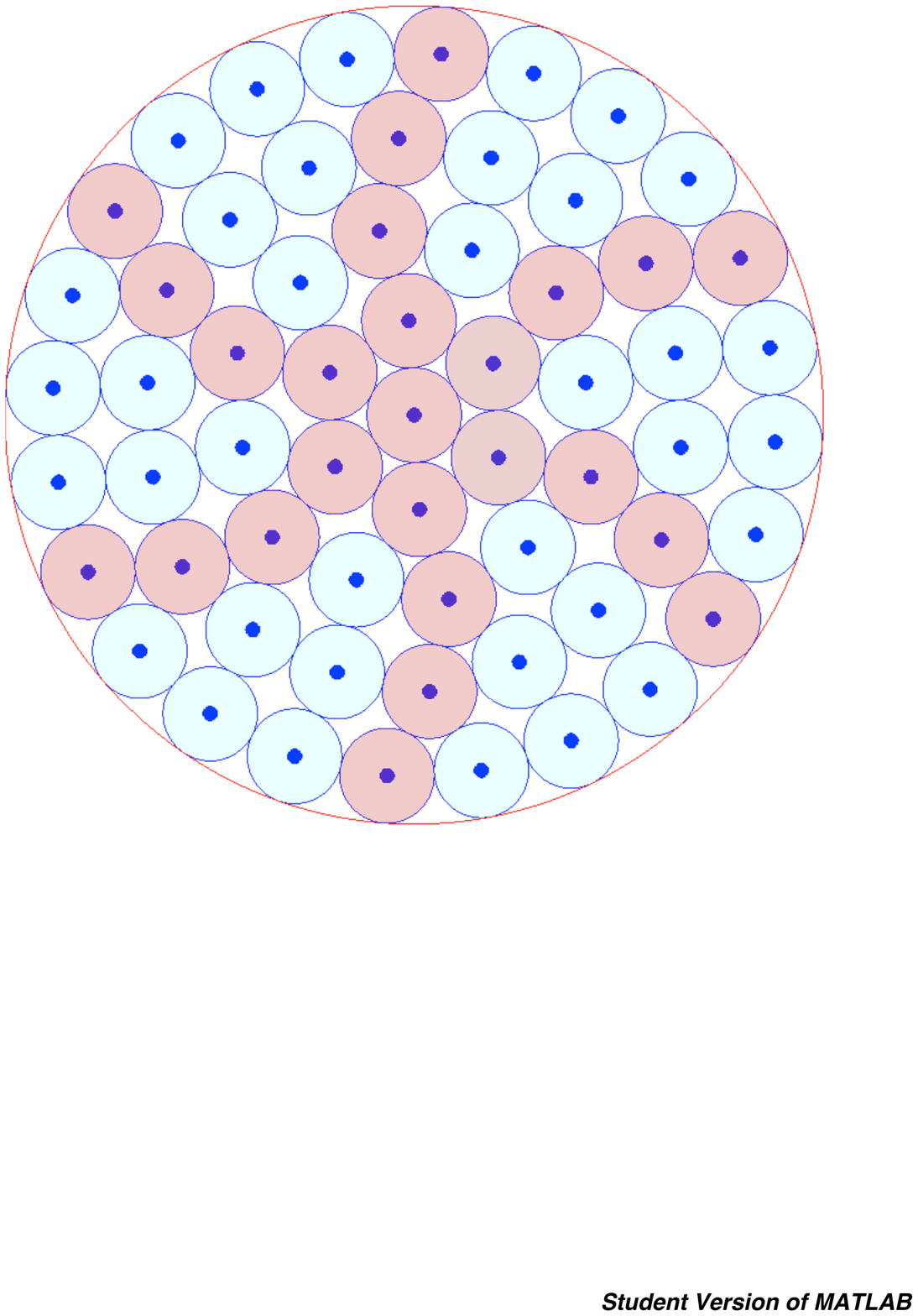}\label{fig:curved:2}} \;
\subfigure[$91$ discs]{
\includegraphics[scale=0.22]{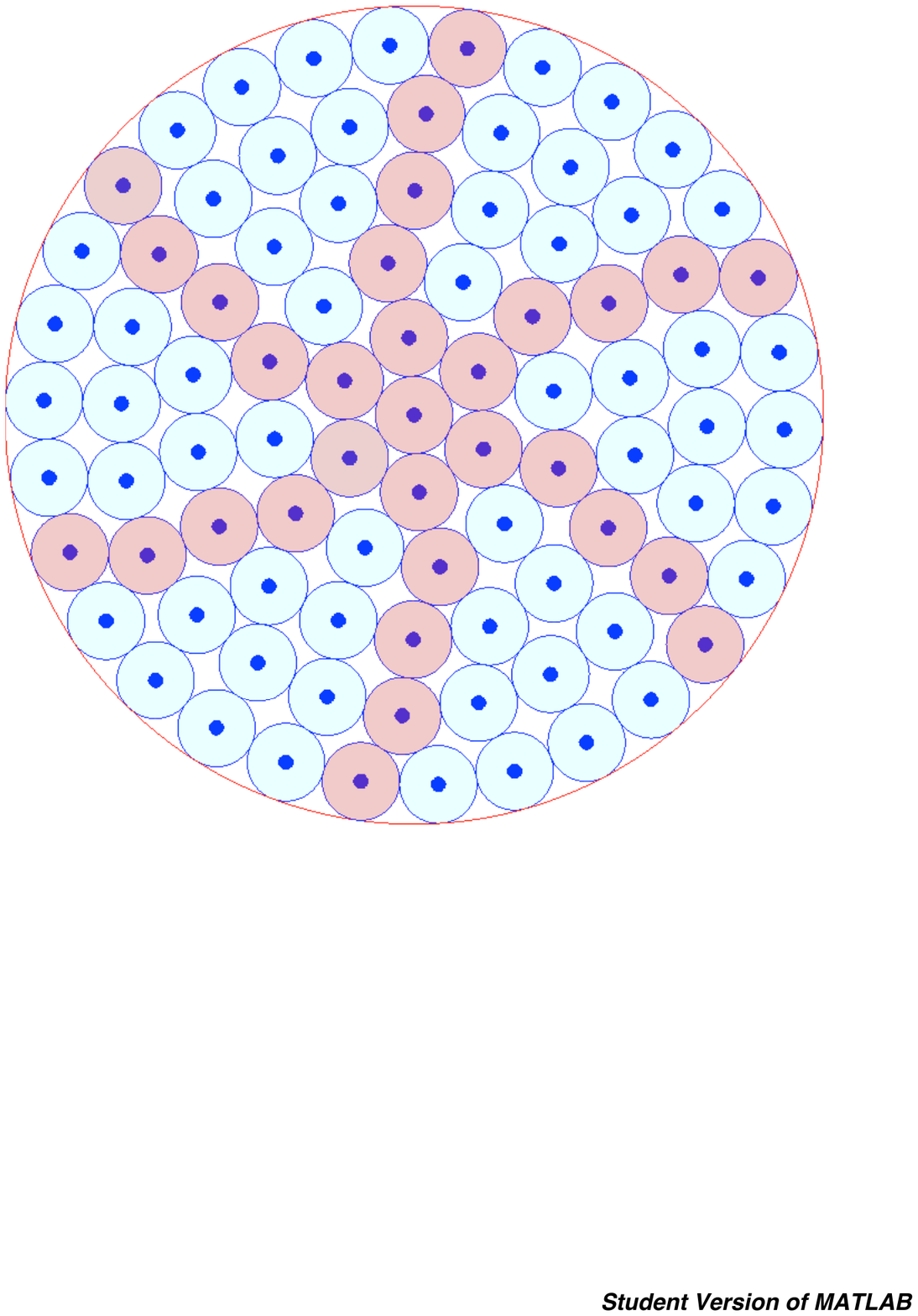}\label{fig:curved:3}}\;
\subfigure[$37$ discs, larger radii]{
\includegraphics[scale=0.22]{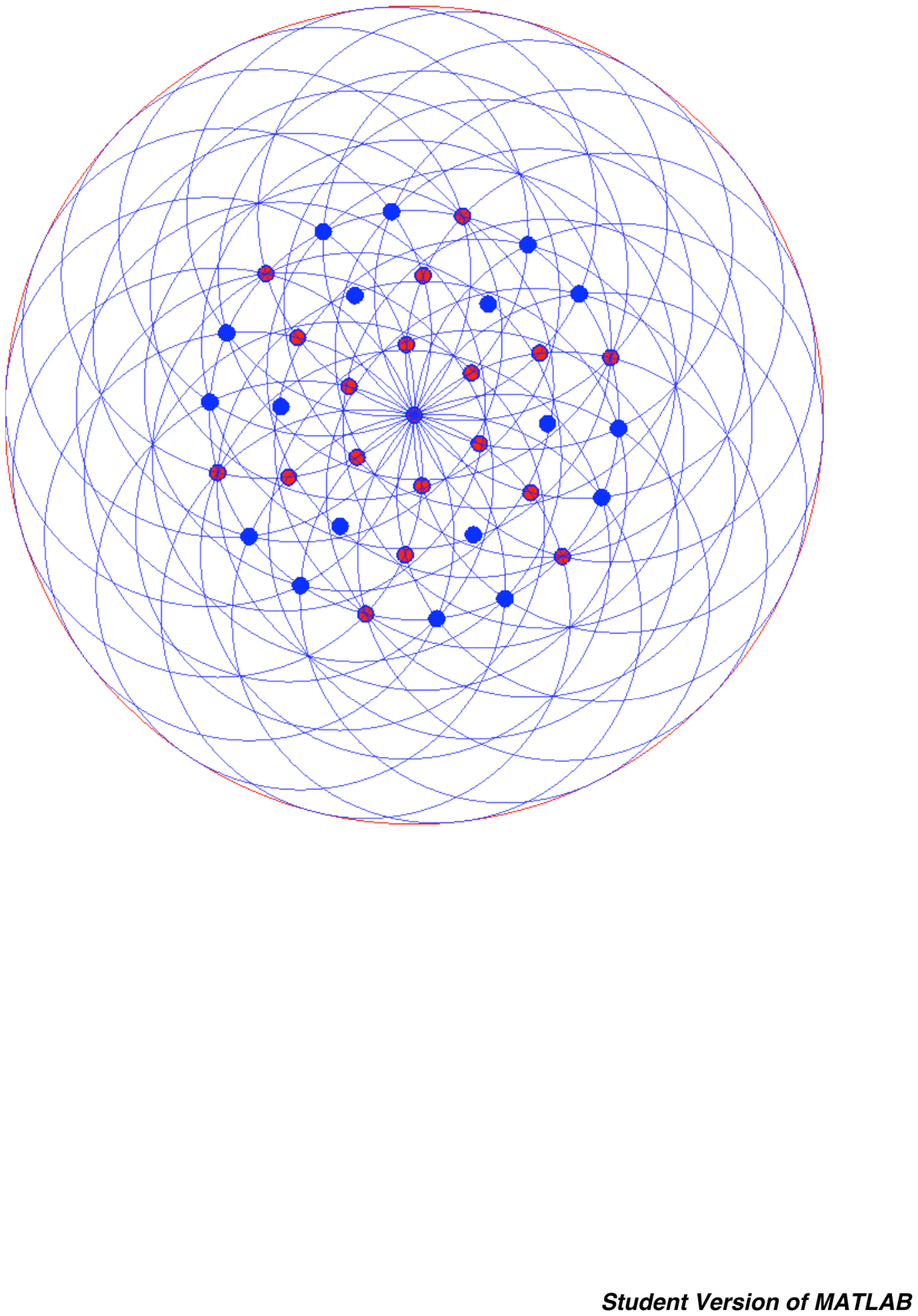}\label{fig:curved:4}}
\caption{Optimal configurations found using
    Algorithm~\ref{alg:circ} for hexagonal numbers of unit discs in an enclosing circle.}
\label{fig:curved}
\end{figure}

\begin{Exa} \label{ex:curved} Lubachevsky and
  Graham~\cite{LubG04a} introduce \emph{curved hexagonal
    packings}, a new family of packings for configurations with a
  hexagonal number of discs. This family contains the best packings
  found so far for $h(k)$ defined by \eqnok{eq:hex}, for $k \leq
  5$. We ran Algorithm~\ref{alg:circ} with the optimal densities found
  in \cite{LubG04a} for $k=3, 4, 5$. The best local optima we
  found are shown in Figure~\ref{fig:curved}; they are identical to
  the configurations found in \cite{LubG04a}. (We highlight some
  of the circles to emphasize the ``curved'' feature of the packing,
  which distinguishes it from a standard hexagonal arrangement, which
  has slightly lower density when restricted to a finite
  circle.)

  When we ran Algorithm~\ref{alg:circ} on the problem of 37 uniform
  discs in a larger disc, where the radii were too large to allow
  packing without overlap, the algorithm with $H(\xi) = \|
  \xi\|_{\infty}$ produced the same arrangement of centers as in
  Figure~\ref{fig:curved:1} (see Figure~\ref{fig:curved:4}) when
  initialized at a sufficiently close initial point.  It is a well
  known property of minimization of the norm $\| \cdot \|_{\infty}$
  that many elements of the argument vector tend to achieve the
  maximum value. In our application, this means that the maximal
  overlap is attained by many pairs of circles. We can obtain
  non-overlapping configurations by simply reducing the radii of all
  discs uniformly, by an amount equal to half the maximal
  overlap. This will yield a solution in which each pair of circles
  that formerly overlapped maximally now just touches.
\end{Exa}

\section{Ellipsoid Packing} \label{sec:ellipsoid}

Here we discuss a bilevel optimization procedure for packing
ellipsoids into an ellipsoidal container in a way that minimizes the
maximum overlap of any pair of ellipsoids. It is not as obvious how to
measure the overlap between two ellipsoids as between two spheres,
since it depends on the orientation of the ellipsoids as well as the
location of their centers.  We measure the overlap by the {\em sum of
  principal semi-axes of the largest ellipsoid that can be inscribed
  in the intersection of the two ellipsoids}. This overlap measure can
be calculated by solving a small semidefinite optimization problem,
constructed according to the S-procedure (see
Subsection~\ref{sec:overlap}). These are the lower-level problems in
our bilevel optimization formulation. The upper-level problem is to
position and orient the ellipsoids so as to minimize the maximum
overlap (see Subsection~\ref{sec:positioning}), while keeping all
ellipsoids inside the enclosing shape. We refer to this problem as
``min-max-overlap.'' Dual information from the lower-level problems
provides a measure of sensitivity of the overlaps to the ellipsoid
parameters, allowing us to develop a successive approximation
approach, with trust regions, whose accumulation points are stationary
for the min-max-overlap problem. Technical results regarding the
trust-region approach and the proof of convergence are given in
Subsection~\ref{sec:tech}.


\subsection{Measuring Overlap}
\label{sec:overlap}

Boyd and Vandenberghe \cite[Section~8.4.2]{BoyV03} consider the
problem of finding the ellipsoid of largest volume inscribed in an
intersection of ellipsoids.  The volume of an ellipsoid $\mathcal{E} =
\{c + S u \mid \norm{u}_2 \leq 1 \}$ is proportional to
$\det(S)$. Although this problem is convex, it is not a semidefinite
program (SDP), because the objective is nonlinear. We thus consider an
alternative in which $\trace (S)$ is used as the objective. The trace
is the sum of lengths of the semi-axes of the ellipsoid, which is a
good proxy for the volume in problems of the type we consider. 
Trace maximization admits an SDP formulation of the lower-level
problems, which facilitates theoretical development and analysis of
our min-max-overlap problem.

Recalling from \eqnok{eq:defE} that we define the ellipsoid
$\mathcal{E}_i$ by
\beq \label{eq:Ei}
\mathcal{E}_i := \{ c_i + S_i u \mid \|u\|_2 \le 1 \},
\eeq
we introduce the notation $\Sigma_i = S_i^2$.
%
Parametrizing the inscribed ellipsoid similarly by $\mathcal{E}_{ij}
:= \{ c_{ij} + S_{ij} u \mid \| u \|_2 \le 1 \}$, and using
\eqnok{eq:LMI} to formulate the fact that the inscribed ellipsoid is
contained in both $\mathcal{E}_i$ and $\mathcal{E}_j$, we formulate
the problem of measuring overlap as follows:
\begin{subequations}
\label{prob:lin_max_volume_ell}
\begin{align}
\hat{O} (c_i,c_j,\Sigma_i,\Sigma_j) := 
 \max_{S_{ij} \succeq 0, c_{ij}, \lambda_{ij1}, \lambda_{ij2}} \;
&\trace (S_{ij}) \nonumber\\
\mbox{subject to} \quad
\label{eq:sdps.1}&\begin{pmatrix} - \lambda_{ij1} I &0&S_{ij}\\0& \lambda_{ij1}-1&(c_{ij}-c_i)^T\\S_{ij} &c_{ij}-c_i&-\Sigma_i\end{pmatrix}\preceq 0, \\
\label{eq:sdps.2}& \begin{pmatrix} - \lambda_{ij2} I &0&S_{ij}\\0& \lambda_{ij2}-1&(c_{ij}-c_j)^T\\S_{ij} &c_{ij}-c_j&-\Sigma_j\end{pmatrix}\preceq 0.
\end{align}
\end{subequations}
The Lagrangian can be written as
\begin{align}
{\cal L} (c, S_{ij},\lambda_{ij1}, \lambda_{ij2}, T_{ij}, M_{ij1}, M_{ij2})  := &
 \langle I, S_{ij}\rangle + \langle T_{ij}, S_{ij} \rangle\nonumber\\ 
& - \langle M_{ij1},
\begin{pmatrix} - \lambda_{ij1} I &0&S_{ij}\\0& \lambda_{ij1}-1&(c_{ij}-c_i)^T\\S_{ij}&c_{ij}-c_i&-\Sigma_i\end{pmatrix}
\rangle \nonumber\\
& - 
\langle M_{ij2},
\begin{pmatrix} - \lambda_{ij2} I &0&S_{ij}\\0& \lambda_{ij2}-1&(c_{ij}-c_j)^T\\S_{ij}&c_{ij}-c_j&-\Sigma_j\end{pmatrix}
\rangle,
\nonumber\end{align}
with the dual problem being derived from
\[
\min_{M_{ij1} \succeq 0, M_{ij2} \succeq 0, T_{ij} \succeq 0} \, \left\{
\max_{S_{ij} \succeq 0, c_{ij}, \lambda_{ij1},\lambda_{ij2}} \,
{\cal L} (c_{ij}, S_{ij},\lambda_{ij1}, \lambda_{ij2}, T_{ij}, M_{ij1}, M_{ij2}) \right\}.
\]
Introducing the following notation for $M_{ij1}$ and $M_{ij2}$:
\begin{equation} \label{def:M}
M_{ij1}:=\begin{pmatrix} R_{ij1} & r_{ij1} & P_{ij1} \\ r_{ij1}^T & p_{ij1} & q_{ij1}^T\\ P_{ij1} & q_{ij1} & Q_{ij1}\end{pmatrix}, \qquad 
M_{ij2}:=\begin{pmatrix} R_{ij2} & r_{ij2} & P_{ij2} \\ r_{ij2}^T & p_{ij2} & q_{ij2}^T\\ P_{ij2} & q_{ij2} & Q_{ij2}\end{pmatrix},
\end{equation}
we can write the dual explicitly as follows:
\begin{align}
\label{prob:dual_lin_max_volume_ell}
\hat{O} (c_i,c_j,\Sigma_i,\Sigma_j) := 
\nonumber
\min_{M_{ij1} \succeq 0, M_{ij2} \succeq 0, T_{ij}\succeq 0} \qquad&
p_{ij1}+p_{ij2}+2q_{ij1}^Tc_i+2q_{ij2}^Tc_j \\
\nonumber
& \qquad +\langle Q_{ij1},\Sigma_{i}\rangle+\langle Q_{ij2},\Sigma_{j}\rangle \nonumber\\ &\nonumber \\
\mbox{subject to} \qquad &0=I+T_{ij}-2P_{ij1}- 2P_{ij2}\\
&0=\trace(R_{ij1})-p_{ij1}\nonumber\\ &0=\trace(R_{ij2})-p_{ij2}\nonumber\\  &0=q_{ij1}+q_{ij2}. \nonumber
\end{align}
(We have assumed without loss of generality that $P_{ij1}$ and
$P_{ij2}$ are in $S \R^{n \times n}$; this follows from $S_{ij} \in S
\R^{n \times n}$.)

When the ellipsoids ${\cal E}_i$ and ${\cal E}_j$ overlap, strong
duality holds for this primal-dual pair of semidefinite programs
since, as we now verify, both problems have a strictly feasible point.
For \eqnok{prob:lin_max_volume_ell} we know that there exists an
ellipsoid with positive volume that is {\em strictly} inscribed in the
intersection. By setting $c_{ij}$ and $S_{ij}$ to be the parameters of
this ellipsoid (with $S_{ij} \succ 0$), the S-procedure for strict
inequalities can be applied to show that (strict) definiteness holds
in \eqnok{eq:sdps.1} and \eqnok{eq:sdps.2}. This fact establishes
strict feasibility of \eqnok{prob:lin_max_volume_ell}. For the dual
(\ref{prob:dual_lin_max_volume_ell}), we can set $T_{ij}=I$ and define
\[
M_{ij1} = M_{ij2} = \begin{pmatrix}  I &0& \frac{1}{2}I\\0& n &0\\ \frac{1}{2}I&0&I\end{pmatrix}.
\]
It is easy to verify that these choices satisfy the constraints in
(\ref{prob:dual_lin_max_volume_ell}), along with the (strict)
interiority conditions $M_{ij1} \succ 0$, $M_{ij2} \succ 0$, $T_{ij}
\succ 0$.  This observation of strong duality justifies our use of the
notation $\hat{O} (c_i,c_j,\Sigma_i,\Sigma_j)$ to denote the optimal
objectives of both primal and dual.


\subsection{Choosing Ellipse Positions and Orientations}
\label{sec:positioning}

The main variables in the min-max-overlap problem are the parameters
defining the ellipses $\mathcal{E}_i$ for $i=1,2,\dotsc,N$: the
centers $c_i$ and the orientations defined by $S_i$ (and thus
$\Sigma_i = S_i^2$).  For $n=3$ (which we assume in this section and
subsequently), we would like to fix the lengths of the axes of each
ellipsoid at the values $r_{i1}$, $r_{i2}$, and $r_{i3}$ (assuming
that $r_{i1}\geq r_{i2}\geq r_{i3}$). This is equivalent to fixing the
eigenvalues of $\Sigma_i$ at $r_{i1}^2$, $r_{i2}^2$, and $r_{i3}^2$,
or to fixing the eigenvalues of $S_i$ to $r_{i1}$, $r_{i2}$, and
$r_{i3}$.

Using the notation $\hat{O}$ defined in
\eqnok{prob:lin_max_volume_ell} and
\eqnok{prob:dual_lin_max_volume_ell}, we can formulate the
min-max-overlap problem as the following bilevel optimization problem:
\begin{subequations}
\label{prob:non_convex_ell}
\begin{align}
\min_{\xi, (c_i,S_i,\Sigma_i), i=1,2,\dotsc,N} \;&
\xi&\\
\mbox{subject to} \quad&\xi \geq \hat{O}(c_i, c_j, \Sigma_i, \Sigma_j),& 1\leq i<j\leq N, \label{prob:non_convex_ell1}\\ 
\label{prob:non_convex_ell2} &\mathcal{E}_i\subset \mathcal{E}, & i=1,2,\dots,N, \\ 
\label{prob:non_convex_ell4} &\Sigma_i=S_i^2, \\
\label{prob:non_convex_ell3} &\mbox{semi-axes of }\mathcal{E}_i \mbox{ have lengths } r_{i1}, r_{i2}, r_{i3},  & i=1,2,\dots,N.
\end{align}
\end{subequations}
This problem is nonconvex for three reasons. First, each pairwise
overlap objective $\hat{O}(c_i,c_j,\Sigma_i,\Sigma_j)$ is a nonconvex
function of its arguments. 
This issue is intrinsic; as in the sphere-packing problem, we expect
there to be many local solutions and we can only expect our algorithm
to find one of them. As we see below (in \eqnok{prob:ellipsoids_upper}
and Algorithm~\ref{alg:xxx}), our algorithm iteratively solves
subproblems in which each $\hat{O}$ is replaced by a linearized
approximation that makes use of the optimal dual variables $M_{ij1}$ and
$M_{ij2}$ from the formulation \eqnok{prob:dual_lin_max_volume_ell}.
These subproblems will be convex if we can overcome the other two
sources of nonconvexity in the formulation
\eqnok{prob:non_convex_ell}, which we address now.

The second nonconvexity issue is in the constraint
\eqnok{prob:non_convex_ell3} on the eigenvalues of $S_i$,
$i=1,2,\dotsc,N$. We consider instead the following convex relaxation:
\begin{equation}
\label{constraints_ellipsoid}
S_i-r_{i1} I\preceq 0, \quad S_i-r_{i3} I\succeq 0, \quad 
\trace(S_i)=r_{i1}+r_{i2}+r_{i3}.
\end{equation}
Note that this is indeed a relaxation: If $\mathcal{E}_i$ has the
desired dimensions, then the eigenvalues of $S_i$ are $r_{i1}$,
$r_{i2}$, and $r_{i3}$, and the conditions
\eqnok{constraints_ellipsoid} are satisfied.  Because the overall
goal is to minimize maximal overlap, and because minimum-volume
ellipsoids are those that are most eccentric, the relaxation
\eqnok{constraints_ellipsoid} is usually ``tight'' with respect to
\eqnok{prob:non_convex_ell} in many interesting cases.
Intermediate iterates are often observed to have ellipsoids less
eccentric than desired.

The third source of nonconvexity --- the constraint
\eqnok{prob:non_convex_ell4} --- is relatively easy to deal with. We
replace it with the following pair of restrictions:
\beq \label{eq:SSig} 
\left[ \begin{matrix} \Sigma_i & S_i \\ S_i &
    I \end{matrix} \right] \succeq 0, \qquad S_i \succeq 0, \qquad
i=1,2,\dotsc,N.  
\eeq
The first of these conditions ensures only that $\Sigma_i \succeq
S_i^2$. However,
the overlap $\hat{O}(c_i,c_j,\Sigma_i,\Sigma_j)$ will grow if
$\Sigma_i$ is replaced by any matrix $\tilde{\Sigma}_i \succeq
\Sigma_i$. Hence, because of the min-max-overlap objective in
\eqnok{prob:non_convex_ell}, the matrices $\Sigma_i$ will be set to
the ``smallest possible values'' that satisfy the conditions
\eqnok{eq:SSig}, that is, $\Sigma_i = S_i^2$.

Finally, defining the containing ellipse to be $\mathcal{E} := \{ x
\mid (x-c)^T \Sigma^{-1} (x-c) \le 1 \}$, we can use \eqnok{eq:LMI} to
formulate the condition \eqnok{prob:non_convex_ell2} as follows:
\beq \label{eq:EiE} 
\left[ \begin{matrix} - \lambda_i I &0&S_i\\
0& \lambda_i-1&(c_i-c)^T\\ 
S_i &c_i-c&-\Sigma \end{matrix} \right] \preceq 0,
\eeq

Given all these considerations, we define the relaxed version of
\eqnok{prob:non_convex_ell} to be addressed in this section as
follows:
\begin{subequations}
\label{prob:non_convex_ell_relax}
\begin{align}
\min_{\xi, (\lambda_i,c_i,S_i \Sigma_i), i=1,2,\dotsc,N} \;\;\;&
\xi\\
\mbox{subject to} \qquad &\xi\geq \hat{O}(c_i, c_j, \Sigma_i, \Sigma_j),& 1\leq i<j\leq N,\label{prob:non_convex_ell_relax1}\\ 
\label{prob:non_convex_ell_relax2} 
&\left[ \begin{matrix} - \lambda_i I &0&S_i\\
0& \lambda_i-1&(c_i-c)^T\\ 
S_i &c_i-c&-\Sigma \end{matrix} \right] \preceq 0, & i=1,2,\dotsc,N, \\
\label{prob:non_convex_ell_relax2a}
&\left[ \begin{matrix} \Sigma_i & S_i \\ S_i & I \end{matrix} \right] \succeq 0, & i=1,2,\dotsc,N, \\
\label{prob:non_convex_ell_relax3}
&S_i-r_{i1} I\preceq 0, \quad S_i-r_{i3} I\succeq 0,  & i=1,2,\dots,N, \\
\label{prob:non_convex_ell_relax4}
& \trace(S_i)=r_{i1}+r_{i2}+r_{i3}, & i=1,2,\dots,N.
\end{align}
\end{subequations}
Note that when the ellipse $\mathcal{E}_i$ is actually a circle, that
is, $r_{i1}=r_{i2}=r_{i3}$, we can fix $S_i = r_{i1} I$ and $\Sigma_i
= r_{i1}^2 I$ in \eqnok{prob:ellipsoids_upper}, and eliminate these
variables from that problem. Hence, we can assume without loss of
generality that $r_{i1}>r_{i3}$.

In the remainder of this subsection, we describe our algorithm for
solving the bilevel optimization problem
\eqnok{prob:non_convex_ell_relax}, and prove convergence
properties. Our development and analysis takes place in a general
setting that encompasses \eqnok{prob:non_convex_ell_relax} but uses
simpler notation. A key step in the algorithm is the solution of a
subproblem for \eqnok{prob:non_convex_ell_relax} in which the
objective is linearized using the optimal values from the dual overlap
formulation \eqnok{prob:dual_lin_max_volume_ell}. The other
constraints in \eqnok{prob:non_convex_ell_relax} remain the same, and
a trust region is added to restrict the amount by which the ellipsoid
parameters can change. This subproblem can be stated as follows:
\begin{subequations}
\label{prob:ellipsoids_upper}
\begin{align}
\label{prob:ellipsoids_upper.1}
\min_{\xi, (\lambda_i, c_i, S_i, \Sigma_i), i=1,2,\dotsc,N} \;\;\;
&\xi\\
\nonumber
\mbox{\rm subject to} \qquad & \xi\geq p_{ij1}+p_{ij2}+2q_{ij1}^Tc_i+2q_{ij2}^Tc_j\nonumber\\ 
\label{prob:ellipsoids_upper.2}
&\qquad\,\,+\langle Q_{ij1},\Sigma_{i}\rangle+\langle Q_{ij2},\Sigma_{j}\rangle, & \mbox{for }  (i, j) \in {\cal I},  \\
\label{prob:ellipsoids_upper.3}
&\left[ \begin{matrix} - \lambda_i I &0&S_i\\
0& \lambda_i-1&(c_i-c)^T\\ 
S_i &c_i-c&-\Sigma \end{matrix} \right] \preceq 0, & i=1,2,\dotsc,N, \\
\label{prob:ellipsoids_upper.3a}
&\left[ \begin{matrix} \Sigma_i & S_i \\ S_i & I \end{matrix} \right] \succeq 0,  & i=1,2,\dotsc,N, \\
\label{prob:ellipsoids_upper.4}
&S_i-r_{i1} I \preceq 0, \quad S_i-r_{i3} I\succeq 0, &  i=1,2,\dots,N,\\
\label{prob:ellipsoids_upper.5}
 &\trace(S_i)=r_{i1}+r_{i2}+r_{i3}, & i=1,2,\dots,N,\\
\label{prob:ellipsoids_upper.6}
&\norm{c_i-c_i^-}_2^2 \leq \Delta_c^2, &  i=1,2,\dots,N, \\
\label{prob:ellipsoids_upper.7}
&\norm{S_i-S_i^-} \leq \Delta_S, &  i=1,2,\dots,N, \\
\label{prob:ellipsoids_upper.8}
&|\lambda_i-\lambda_i^-| \leq \Delta_{\lambda}, &  i=1,2,\dots,N.
\end{align}
\end{subequations}
Here, $(\lambda_i^-,c_i^-,S_i^-,\Sigma_i^-)$ are the values of the
variables at the current iteration, and $\Delta_c$, $\Delta_S$, and
$\Delta_{\lambda}$ are trust-region radii. The quantities $p_{ij1}$,
$p_{ij2}$, $q_{ij1}$, $q_{ij2}$, $Q_{ij1}$, and $Q_{ij2}$ are
extracted from the dual solutions $M_{ij1}$ and $M_{ij2}$ of
\eqnok{prob:dual_lin_max_volume_ell} according to the structure
\eqnok{def:M}.  The set ${\cal I}$ represents a {\em subset} of all
possible pairs $(i,j)$ for $1 \le i < j \le N$, representing some
selection of ellipses which currently have a (strict) overlap. Further
details on the choice of ${\cal I}$ are given in
Subsection~\ref{sec:alg}.

We claim first that, if it is possible to fit each ellipsoid
$\mathcal{E}_i$ {\em strictly} inside the enclosing ellipsoid
$\mathcal{E}$, the subproblem \eqnok{prob:ellipsoids_upper} satisfies a
Slater condition. That is, there exists a point that satisfies the
linear equality constraints and {\em strictly} satisfies the
inequality constraints in this problem. To justify this claim, we
first show that it is possible to find a point
$(\bar{\lambda_i},\bar{c}_i, \bar{S}_i, \bar{\Sigma}_i)$ that
satisfies the following conditions:
\begin{subequations}
\label{prob:slater}
\begin{align}
\label{prob:slater.3}
\left[ \begin{matrix} - \bar{\lambda}_i I &0&\bar{S}_i\\
0& \bar{\lambda}_i-1&(\bar{c}_i-c)^T\\ 
\bar{S}_i &\bar{c}_i-c&-\Sigma \end{matrix} \right] &\prec 0, && i=1,2,\dotsc,N, \\
\label{prob:slater.3a}
\left[ \begin{matrix} \bar{\Sigma}_i & \bar{S}_i \\ \bar{S}_i & I \end{matrix} \right] &\succ 0,  && i=1,2,\dotsc,N, \\
\label{prob:slater.4}
\bar{S}_i-r_{i1} I \prec 0, \quad \bar{S}_i-r_{i3} I&\succ 0, && i=1,2,\dots,N,\\
\label{prob:slater.5}
\trace(S_i)=r_{i1}+r_{i2}+r_{i3},& &&  i=1,2,\dots,N.
\end{align}
\end{subequations}
First, choosing $\bar{c}_i=c$ in \eqnok{prob:slater.3}, and orienting
ellipsoid $\mathcal{E}_i$ appropriately, we can find
$\bar{\lambda}_i>0$ such that \eqnok{prob:slater.3} is satisfied. This
remains true if we perturb $\bar{S}_i$ slightly so that its spectrum
lies in the {\em open} interval $(r_{i3},r_{i1})$ while still
satisfying the trace condition \eqnok{prob:slater.5}. This perturbed
$\bar{S}_i$ thus satisfies the conditions
\eqnok{prob:slater.4}. Second, we can simply define
$\bar{\Sigma}_i = \sigma_i I$ for large enough $\sigma_i>0$ to ensure
that \eqnok{prob:slater.3a} is satisfied strictly.

Next, note that from the current iteration, we have a point
$(\lambda_i^-,c_i^-,S_i^-,\Sigma_i^-)$ that satisfies the constraints of
\eqnok{prob:non_convex_ell_relax}, that is,
\begin{subequations}
\label{prob:previt}
\begin{align}
\label{prob:previt.2} 
\left[ \begin{matrix} - \lambda_i^- I &0&S_i^-\\
0& \lambda_i^--1&(c_i^--c)^T\\ 
S_i^- &c_i^--c&-\Sigma \end{matrix} \right] &\preceq 0, && i=1,2,\dotsc,N, \\
\label{prob:previt.2a}
\left[ \begin{matrix} \Sigma_i^- & S_i^- \\ S_i^- & I \end{matrix} \right] &\succeq 0, 
&& i=1,2,\dotsc,N, \\
\label{prob:previt.3}
S_i^--r_{i1} I\preceq 0, \quad S_i^--r_{i3} I&\succeq 0,  && i=1,2,\dots,N, \\
\label{prob:previt.4}
\trace(S_i^-)=r_{i1}+r_{i2}+r_{i3},& && i=1,2,\dots,N.
\end{align}
\end{subequations}
It is now easy to check that for sufficiently small $\epsilon>0$, the point
\[
(\lambda_i(\epsilon),c_i(\epsilon), S_i(\epsilon),\Sigma_i(\epsilon)) :=
(1-\epsilon)   (\lambda_i^-,c_i^-,S_i^-,\Sigma_i^-) +
\epsilon  (\bar{\lambda_i},\bar{c}_i, \bar{S}_i, \bar{\Sigma}_i)
\]
satisfies the inequalities \eqnok{prob:ellipsoids_upper.3},
\eqnok{prob:ellipsoids_upper.3a}, and \eqnok{prob:ellipsoids_upper.4}
strictly, satisfies the linear constraint
\eqnok{prob:ellipsoids_upper.5}, and satisfies the trust-region
constraints \eqnok{prob:ellipsoids_upper.6},
\eqnok{prob:ellipsoids_upper.7}, and \eqnok{prob:ellipsoids_upper.8}
strictly. Since we can choose $\xi$ arbitrarily large to strictly
satisfy \eqnok{prob:ellipsoids_upper.2}, we conclude that there exists
a Slater point for \eqnok{prob:ellipsoids_upper}.

\subsection{Technical Results} \label{sec:tech}

We prove here some technical results that provide the basis for
convergence of the trust-region strategy. To simplify the notation, we
note that each dual overlap problem
\eqnok{prob:dual_lin_max_volume_ell} has the general form
\begin{subequations} 
\label{eq:50}
\begin{align}
P(l,C): \qquad
t^*_l(C) := &\min_{M_l}   \;\langle C,M_l \rangle \\
 &\,\,\mbox{s.t.} \,\,\langle A_{l,i},M_l \rangle =b_{l,i}, \;\; i=1,2,\dotsc,p_l, 
\;\; M_l \succeq 0.
\end{align}
\end{subequations}
Here $C$ captures the parameters that describe all the ellipses, and
$M_l$ is the dual variable for the overlap problem. We assume that $C$
lies in a set $\Omega$ of the following form:
\beq \label{eq:COm}
\Omega := \bar{\Omega} \cap \{ C \, : \, \langle B_k,C \rangle = b_k, \ k=1,2,\dotsc,p \},
\eeq
where $\bar{\Omega}$ is closed, convex, bounded, with nonempty
interior. We now verify formally that the ellipse parameters
satisfying the constraints in \eqnok{prob:non_convex_ell_relax} can be
expressed in the form \eqnok{eq:COm}. We define $C$ to be a
block-diagonal matrix with $N$ blocks of the form:
\beq \label{eq:Cblock}
\left[ \begin{matrix} \Sigma_i & c_i \\ c_i^T & 1 \end{matrix} \right], \quad
i=1,2,\dotsc,N,
\eeq
and define $\bar{\Omega}$ to be the set of all symmetric matrices of
this form for which there exist $\lambda_i$ and $S_i$ such that each
tuple $(\lambda_i,c_i,S_i, \Sigma_i)$ satisfies the conditions
\eqnok{prob:non_convex_ell_relax2},
\eqnok{prob:non_convex_ell_relax2a}, and
\eqnok{prob:non_convex_ell_relax3}.  Boundedness of  $c_i$
is obvious from the containment condition $\mathcal{E}_i
\subset \mathcal{E}$; boundedness of $S_i$ follows from
\eqnok{prob:non_convex_ell_relax3}; while
\eqnok{prob:non_convex_ell_relax2} implies that $\lambda_i \in
[0,1]$. Boundedness of $\Sigma_i$ is not guaranteed by the constraints
in \eqnok{prob:non_convex_ell_relax}. We could, however, add the
constraint $\Sigma_i \preceq r_{i1}^2 I$ without changing the solution
of the problem, thus completing the verification of boundedness of
$\bar{\Omega}$. (For simplicity, however, we do not put this explicit
bound on $\Sigma_i$ in our discussion below.) Closedness and convexity
are immediate consequence of the form of the constraints
\eqnok{prob:non_convex_ell_relax2},
\eqnok{prob:non_convex_ell_relax2a}, and
\eqnok{prob:non_convex_ell_relax3}. To verify nonemptiness of the
interior of $\bar{\Omega}$, recall the discussion following
\eqnok{prob:non_convex_ell_relax}, where we noted that variable $S_i$
can be eliminated from the formulation if ellipsoid $\mathcal{E}_i$ is
in fact a circle. Thus, we can assume without loss of generality that
$r_{i1}>r_{i3}$ for all $i$, and hence, from the discussion
surrounding \eqnok{prob:slater}, we conclude that the set of tuples
$(\lambda_i,c_i,S_i,\Sigma_i)$ allowed by constraints
\eqnok{prob:non_convex_ell_relax2},
\eqnok{prob:non_convex_ell_relax2a}, and
\eqnok{prob:non_convex_ell_relax3} has nonempty interior.  The
structural features of $C$ (the diagonal element $1$ in \eqnok{eq:Cblock} and
the off-diagonal zeros) can in principle be enforced by affine
constraints of the form given in \eqnok{eq:COm}. The constraints
\eqnok{prob:non_convex_ell_relax4} can also be enforced in this way.

Following the notation of \eqnok{prob:slater}, we denote by $\bar{C}$
the point that satisfies
\beq \label{eq:Cbar} 
\bar{C} \in \inT \bar{\Omega} \quad \mbox{and}
\quad \langle B_k,\bar{C} \rangle = b_k, \ k=1,2,\dotsc,p.  
\eeq
We denote by $M_l(C)$ an optimal value of $M_l$ in \eqnok{eq:50} (not
necessarily unique).

The primal form \eqnok{prob:lin_max_volume_ell} of the overlap problem
\eqnok{eq:50} has the form
\begin{equation} \label{eq:50a}
\max_{\zeta_l=(\zeta_{l,1},\zeta_{l,2}, \dotsc, \zeta_{l,p_l})} \, b_l^T\zeta_l \quad \mbox{s.t.} \quad 
C-\sum_{i=1}^{p_l} \zeta_{l,i} A_{l,i}  \succeq 0.
\end{equation}
As discussed in Subsection~\ref{sec:overlap}, both \eqnok{eq:50} and
\eqnok{eq:50a} have strictly feasible points when there is positive
overlap between two ellipsoids. Therefore,  strong duality holds, so
the following optimality conditions relating the solutions of
\eqnok{eq:50} and \eqnok{eq:50a} are satisfied:
\begin{subequations} \label{eq:50kkt}
\begin{align}
\label{eq:50kkt.1}
0 \preceq M_l & \perp C-\sum_{i=1}^{p_l} \zeta_{l,i} A_{l,i}  \succeq 0 \\
\langle A_{l,i},M_l \rangle  & =b_{l,i}, \;\; i=1,2,\dotsc,p_l.
\end{align}
\end{subequations}
By convention, we set $t^*_l(C)=-\infty$ if \eqnok{eq:50} is
infeasible, that is, if there is no overlap between the two ellipses
corresponding to index $l$.  By the nature of the problem, we know
that $t^*_l(C)>0$ if these two ellipses have positive overlap.  It is
easy to see that $t^*_l(C)$ is a concave, extended-valued function of
$C \in \Omega$, and as a consequence that $t^*_l(C)$ is continuous on
the relative interior of its domain.  Further useful facts about
$t^*_l(C)$ are given in Lemma~\ref{lem:subdiff}. These include
Lipschitz continuity in a neighborhood of a point $C$ at which
\eqnok{eq:50a} has a strictly interior point (which, as noted in
Subsection~\ref{sec:overlap}, occurs when the two ellipsoids have
positive overlap), and the fact that any solution $M_l(C)$ of
\eqnok{eq:50} belongs to the Clarke subdifferential of $t^*_l(C)$.

Using the notation of \eqnok{eq:50} and \eqnok{eq:50a} to capture the
min-max-overlap problem \eqnok{prob:non_convex_ell}, we can state this
problem as follows:
\beq \label{eq:51}
\min_{C \in \Omega} \;\; t^*(C) := 
\max_{l=1,2,\dotsc,m} \, t_l^*(C).
\eeq
Here each element in $\{1,2,\dotsc,m\}$ represents the overlap problem
for a given pair of ellipsoids. Note that $t^*(C)=-\infty$ if no pair
of ellipsoids overlaps or touches.

We now define the subproblems to be solved at each iteration of the
algorithm, which depend on just a subset $\cF \subset
\{1,2,\dotsc,m\}$ of the individual overlap problems.  The key
quantity is defined as follows
\beq \label{eq:52} 
t^*_{\cF} (C) := \max_{l \in  \cF} \, t^*_l(C),
\eeq
where $\cF$ is a subset of the {\em strictly overlapping} ellipsoid
pairs, that is,
\[
\cF \subset \{ l=1,2,\dotsc,m \, : \, t^*_l(C) > 0 \}.
\]
(We will be more specific about the definition of $\cF$ later.)
%
%
%
In the algorithm, the solutions $M_l(C)$ of \eqnok{eq:50} for $l \in
\cF$ are used to construct a linearized subproblem whose solution is a
step $\DC$ in the parameter $C$, assuming that the current $C$ is
feasible.  The subproblem is as follows:
\begin{subequations}  \label{eq:53}
\begin{align}
L(\cF,C,M_{\cF}(C),\rho): \qquad &r(\cF,C,M_{\cF}(C),\rho) 
:= \min_{r, \DC} \, r \\
\mbox{s.t.} \qquad &r \ge t^*_l(C) + \langle \DC , M_l(C) \rangle, \;\;
l \in \cF, \\
&C + \DC  \in \Omega, \;\;\; \|\DC\| \le \rho.
\end{align}
\end{subequations}
Here $\rho>0$ is a trust-region radius, and $M_{\cF}(C)$ denotes the
set of matrices $\{ M_l(C) \, : \, l \in \cF \}$.  The problem
\eqnok{eq:53} is convex, and its feasible set is bounded, so it has an
optimal value which we denote by $\DC(\rho)$.  Further, the KKT
conditions are satisfied at this point. This claim follows from the
fact that, given the point $\bar{C}$ satisfying \eqnok{eq:Cbar}, and
defining $\DC = \epsilon(\bar{C}-C)$ for some small positive
$\epsilon>0$, we have that
\[
C + \DC = (1-\epsilon)C + \epsilon \bar{C} \in \inT \Omega,
\]
while the trust-region condition is strictly satisfied ($\| \epsilon
(\bar{C}-C) \| < \rho$), and the remaining  constraints in \eqnok{eq:53}
are affine. Hence, the conditions of \cite[Theorem~28.2]{Roc70} are
satisfied, and we can apply \cite[Corollary~28.3.1]{Roc70} to deduce
that there exist
$\mu_l$, $l \in \cF$ and $\tau \ge 0$
such that
\begin{subequations} \label{eq:55}
\begin{align}
&1-\sum_{l \in \cF} \mu_l = 0, \\
&0 \le \mu_l \perp r(\cF,C,M_{\cF}(C),\rho) - t^*_l(C) - \langle
\DC, M_l(C) \rangle \ge 0, \;\; l \in \cF, \\
&- \sum_{l \in \cF} \mu_l M_l(C) - \tau u \in N_{\Omega}(C+\DC) \;\;
\mbox{for some $u \in \partial \|\DC\|$,} \\
&C + \DC \in \Omega, \\
&0 \le \tau \perp  \rho - \| \DC \| \ge 0.
\end{align}
\end{subequations}
Here $N_{\Omega}(C)$ denotes the normal cone to the closed convex set
$\Omega$ at the point $C$ (see \eqnok{def.normal}) and
$\partial$ denotes a subdifferential. (As noted in
Section~\ref{sec:intro}, since $\|\cdot\|$ is convex and Lipschitz
continuous, the Clarke subdifferential coincides with the
subdifferential from convex analysis.) Note that the set $\{ \tau v \,
: \, \tau \ge 0, \; v \in \partial \|\DC\|\}$ is a closed convex cone
and that it is an outer semicontinuous set-valued function of $\DC$.

It is sometimes convenient to rewrite $L(\cF,C,M_{\cF}(C),\rho)$ by
defining the function 
\begin{equation} \label{eq:defG}
G_{\cF}(\DC;C,M_{\cF}(C)) := \max_{l \in \cF} \, \langle C + \DC , M_l(C) \rangle,
\end{equation}
and writing
\begin{equation} \label{eq:L.alt}
L(\cF,C,M_{\cF}(C),\rho): \;\; \min_{\DC} \, G_{\cF}(\DC;C,M_{\cF}(C)) 
\;\; \mbox{s.t.} \;\; C + \DC  \in \Omega, \;\; \|\DC\| \le \rho.
\end{equation}
Note that $G_{\cF}(\cdot;C,M_{\cF}(C))$ is convex, in fact piecewise
linear.

Next, we define the reference problem $P(\cF)$ that minimizes
$t^*_{\cF}(C)$ defined in \eqnok{eq:52} over $C \in \Omega$:
\beq \label{eq:56}
P(\cF): \qquad t^*_{\cF} := \min_{C \in \Omega} \,  t^*_{\cF}(C) =
\min_{C \in \Omega} \, \max_{l \in \cF} \, t^*_l(C).
\eeq
Nonsmooth analysis provides the following result that characterizes
solutions of \eqnok{eq:56}.
\begin{lemma} \label{lem:pfstat} Suppose that for a given
set $\cF \subset \{1,2,\dotsc,m\}$, $\bar{C}$ is a local
  solution of \eqnok{eq:56} at which 
  \eqnok{eq:50a} has a strictly interior point, for all $l \in
  \cF$. Define $\bar{\cF}(\bar{C})$ to be the set of indices achieving
  the maximum in \eqnok{eq:56}, that is, $\bar{\cF}(\bar{C}) = \{ l
  \in \cF \, : \, t^*_l(\bar{C}) = t^*_{\cF} \}$. Then there exist
  $\bar{M}_l \in \partial t^*_l(\bar{C})$ and scalars $\mu_l$, for
  all $l \in \bar{\cF}(\bar{C})$, such that
\begin{equation} \label{eq:56opt}
-\sum_{l \in \bar{\cF}(\bar{C})} \mu_l \bar{M}_l \in N_{\Omega}(\bar{C}), 
\quad
\sum_{l \in \bar{\cF}(\bar{C})} \mu_l = 1,
\quad
\mu_l \ge 0, \; l \in  \bar{\cF}(\bar{C}), 
\quad
\bar{C} \in \Omega.
\end{equation}
\end{lemma}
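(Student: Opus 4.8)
The plan is to read \eqnok{eq:56} as the minimization of a pointwise maximum of finitely many locally Lipschitz functions over the closed convex set $\Omega$, and to apply the two standard pieces of Clarke's nonsmooth calculus: the first-order necessary condition for a local minimizer, and the subdifferential rule for a finite maximum. Note first that, although each $t^*_l$ is \emph{concave}, the objective $t^*_{\cF} = \max_{l \in \cF} t^*_l$ is neither convex nor concave, so the problem is genuinely nonconvex and we must work with the Clarke subdifferential rather than convex-analysis tools. The enabling observation is that the hypothesis ``\eqnok{eq:50a} has a strictly interior point for all $l \in \cF$'' places $\bar{C}$ in the interior of the effective domain of each $t^*_l$, $l \in \cF$; by Lemma~\ref{lem:subdiff} this makes each such $t^*_l$ Lipschitz continuous on a neighborhood of $\bar{C}$, hence finite-valued there. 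Consequently $t^*_{\cF}$ is Lipschitz near $\bar{C}$, and the active set $\bar{\cF}(\bar{C}) = \{\, l \in \cF : t^*_l(\bar{C}) = t^*_{\cF}(\bar{C})\,\}$ attaining the maximum at $\bar C$ is a nonempty subset of $\cF$ (the maximum over the finite set $\cF$ is attained).

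I would then carry out two steps. First, since $\bar{C}$ is a local minimizer of the Lipschitz function $t^*_{\cF}$ over the closed convex set $\Omega$, the nonsmooth multiplier rule (see \cite{Cla83}) gives $0 \in \partial t^*_{\cF}(\bar{C}) + N_{\Omega}(\bar{C})$; that is, there is a subgradient $v \in \partial t^*_{\cF}(\bar{C})$ with $-v \in N_{\Omega}(\bar{C})$. Here $N_{\Omega}(\bar{C})$ is the ordinary convex normal cone of \eqnok{def.normal}, which coincides with the Clarke normal cone because $\Omega$ is convex. Second, I apply the Clarke subdifferential calculus for a finite maximum (see \cite{Cla83}): since every $t^*_l$, $l \in \cF$, is Lipschitz near $\bar{C}$,
\[
\partial t^*_{\cF}(\bar{C}) \subseteq \mbox{\rm conv}\,\Big\{ \bigcup_{l \in \bar{\cF}(\bar{C})} \partial t^*_l(\bar{C}) \Big\}.
\]
Inactive indices $l \in \cF \setminus \bar{\cF}(\bar{C})$ have $t^*_l(\bar{C}) < t^*_{\cF}(\bar{C})$ and, by continuity, stay strictly below the maximum on a neighborhood of $\bar{C}$, so they do not contribute to $\partial t^*_{\cF}(\bar{C})$.

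Combining the two steps finishes the proof. Because each $\partial t^*_l(\bar{C})$ is convex and nonempty, membership of $v$ in the convex hull above means $v = \sum_{l \in \bar{\cF}(\bar{C})} \mu_l \bar{M}_l$ for some $\bar{M}_l \in \partial t^*_l(\bar{C})$, $\mu_l \ge 0$, and $\sum_{l \in \bar{\cF}(\bar{C})} \mu_l = 1$ (choosing any $\bar{M}_l \in \partial t^*_l(\bar{C})$ for indices with $\mu_l = 0$). Since $-v \in N_{\Omega}(\bar{C})$, this is exactly $-\sum_{l \in \bar{\cF}(\bar{C})} \mu_l \bar{M}_l \in N_{\Omega}(\bar{C})$, together with $\sum \mu_l = 1$, $\mu_l \ge 0$, and the feasibility $\bar{C} \in \Omega$, which is \eqnok{eq:56opt}.

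I expect the main obstacle to be the careful justification of the two pieces of nonsmooth calculus in the precise setting here, rather than any computation. The delicate point is verifying the Lipschitz hypothesis that both the multiplier rule and the max-rule require: the $t^*_l$ are concave but extended-valued (equal to $-\infty$ where the corresponding pair of ellipsoids fails to overlap, by the convention following \eqnok{eq:50kkt}), so Lipschitz continuity cannot be assumed away from the overlapping regime. This is exactly where the ``strictly interior point'' assumption and Lemma~\ref{lem:subdiff} are indispensable, since local Lipschitzness of $t^*_{\cF}$ also makes its horizon subdifferential trivial and thereby removes any need for an extra constraint qualification in the necessary condition. A secondary point to record is that the Clarke and convex normal cones of $\Omega$ agree, which holds because $\Omega$ is convex; once these are in place, the inclusion form of the max-rule (equality is not needed, as concave functions are generally not Clarke-regular) yields the result.
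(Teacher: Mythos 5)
Your proposal is correct and follows essentially the same route as the paper's proof: both use Lemma~\ref{lem:subdiff}~(iv) to convert the strict-interiority hypothesis into local Lipschitz continuity of each $t^*_l$, then combine the Clarke necessary condition $0 \in \partial t^*_{\cF}(\bar{C}) + N_{\Omega}(\bar{C})$ (the Corollary on p.~52 of \cite{Cla83}) with the max-rule inclusion $\partial t^*_{\cF}(\bar{C}) \subset \mbox{conv}\{\partial t^*_l(\bar{C}) : l \in \bar{\cF}(\bar{C})\}$ (the paper cites \cite[Theorem~6.1.5]{BorL00}, you cite Clarke's version, which is an immaterial difference). Your additional remarks on why inactive indices drop out and why the convex and Clarke normal cones coincide are sound elaborations of the same argument rather than a different approach.
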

\begin{proof}
  We appeal to results about the Clarke subdifferential applied to
  max-functions and sums of functions.  First, note that the strict
  interiority assumption means that we can apply
  Lemma~\ref{lem:subdiff} (iv) to deduce that each $t^*_l$ is
  Lipschitz continuous in a neighborhood of $\bar{C}$.  Hence,
  applying \cite[Theorem~6.1.5]{BorL00}, we have that
\begin{equation} \label{eq:dTf}
  \partial \, t^*_{\cF}(C)  \subset
  \mbox{conv} \{ \partial t^*_l(\bar{C}) \, : \, l \in \bar{\cF} (\bar{C}) \},
\end{equation}
where $\mbox{conv}(\cdot)$ denotes the convex hull.  The Corollary in
\cite[p.~52]{Cla83} can be used to show that when $\bar{C}$ is a
solution of \eqnok{eq:56}, we have
\[
0 \in \partial \, t^*_{\cF}(\bar{C}) + N_{\Omega}(\bar{C}).
\]
The result follows by combining this expression with \eqnok{eq:dTf}.
\end{proof}

By introducing multipliers for the indices $l \in \cF \setminus
\bar{\cF} (\bar{C})$, we can restate the conditions \eqnok{eq:56opt}
as follows:
\begin{subequations}
\label{eq:56opta}
\begin{align} 
&0 \le \mu_l \perp  t^*_{\cF} - t^*_l(\bar{C}) \ge 0, 
\qquad \mbox{for all $l \in \cF$,} \\
&\sum_{l \in \cF} \mu_l =1, \\
&-\sum_{l \in \cF} \mu_l \bar{M}_l \in  N_{\Omega}(\bar{C}), \\
&\bar{C} \in \Omega.
\end{align}
\end{subequations}
We say that a point $\bar{C}$ at which these conditions are satisfied
is {\em Clarke-stationary} for $P(\cF)$ defined in \eqnok{eq:56}.

%

For purposes of our main technical lemma, we define the ``predicted
decrease'' from subproblem $L(\cF,C,M_{\cF}(C),\rho)$ as follows:
\beq \label{eq:Lam}
\Lambda(\cF,C,M_{\cF}(C),\rho) := t^*_{\cF}(C) - r(\cF,C,M_{\cF}(C),\rho).
\eeq
Note that since $\DC=0$ is feasible for 
\eqnok{eq:53}, we have $\Lambda(\cF,C,M_{\cF}(C),\rho) \ge 0$.

\begin{lemma} \label{lem:tech}
Let $\cF \subset \{1,2,\dotsc,m\}$ be given.
\begin{itemize}
\item[(i)] Suppose that $\bar{C}$ is such that  \eqnok{eq:50a}
  has a strictly feasible point for all $l \in \cF$.
  If $\Lambda(\cF,\bar{C},M_{\cF}(\bar{C}),\rho)=0$ for some $\rho>0$
  and some set of solutions $M_l(\bar{C})$ to \eqnok{eq:50} for $l \in
  \cF$, then $\bar{C}$ is Clarke-stationary for $P(\cF)$.
\item[(ii)] $\Lambda(\cF,C,M_{\cF}(C),\rho)$ is an increasing function
  of $\rho>0$.
\item[(iii)] $\Lambda(\cF,C,M_{\cF}(C),\rho) / \rho$ is a decreasing
  function of $\rho>0$.
\item[(iv)] For all $C$ in some neighorhood of $\bar{C}$ defined in
  (i), we have that $t^*_{\cF}(C + \DC(\rho)) \le
  r(\cF,C,M_{\cF}(C),\rho)$ for any $\rho>0$.
\end{itemize}

\end{lemma}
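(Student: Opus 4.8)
The plan is to prove the four items of Lemma~\ref{lem:tech} mostly by unpacking the definition \eqnok{eq:Lam} of $\Lambda$ and exploiting the structure of the convex trust-region subproblem \eqnok{eq:53} (equivalently its reformulation \eqnok{eq:L.alt}). I will treat (ii) and (iii) first, since they are purely about how the optimal value of a convex minimization over a ball of radius $\rho$ scales with $\rho$, and then return to (i) and (iv), which require the stationarity machinery from Lemma~\ref{lem:pfstat}.

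For (ii), I would argue directly from the definition $\Lambda = t^*_{\cF}(C) - r(\cF,C,M_{\cF}(C),\rho)$. Since $t^*_{\cF}(C)$ does not depend on $\rho$, monotonicity of $\Lambda$ in $\rho$ is equivalent to $r(\cF,C,M_{\cF}(C),\rho)$ being \emph{decreasing} in $\rho$. This is immediate from the formulation \eqnok{eq:L.alt}: enlarging $\rho$ enlarges the feasible set $\{\DC : C+\DC \in \Omega, \|\DC\|\le \rho\}$ over which the fixed convex objective $G_{\cF}(\cdot;C,M_{\cF}(C))$ is minimized, so the minimum cannot increase. For (iii), I would use a convexity/homogeneity scaling argument. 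Given a minimizer $\DC(\rho)$ for radius $\rho$, and any $\rho' < \rho$, the scaled step $(\rho'/\rho)\DC(\rho)$ is feasible for radius $\rho'$ (its norm is at most $\rho'$, and $C + (\rho'/\rho)\DC(\rho) = (1-\rho'/\rho)C + (\rho'/\rho)(C+\DC(\rho))$ lies in the convex set $\Omega$ since both endpoints do). Evaluating $G_{\cF}$ at this scaled step and comparing the resulting predicted decrease to $(\rho'/\rho)\,\Lambda(\cF,C,M_{\cF}(C),\rho)$ — here the key point is that $t^*_{\cF}(C) = G_{\cF}(0;C,M_{\cF}(C))$ by complementarity so that the decrease scales linearly along the segment from $0$ to $\DC(\rho)$ — shows $\Lambda(\cF,C,M_{\cF}(C),\rho')/\rho' \ge \Lambda(\cF,C,M_{\cF}(C),\rho)/\rho$, which is exactly the claimed monotonicity of the ratio.

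For (i), I would show that $\Lambda = 0$ forces the KKT system \eqnok{eq:55} to collapse onto the Clarke-stationarity conditions \eqnok{eq:56opta} of $P(\cF)$. When $\Lambda(\cF,\bar{C},M_{\cF}(\bar{C}),\rho)=0$ we have $r = t^*_{\cF}(\bar{C})$, so $\DC = 0$ is optimal for \eqnok{eq:53}; feasibility of $\DC=0$ with $\|\DC\| = 0 < \rho$ means the trust-region constraint is inactive, forcing $\tau = 0$ in \eqnok{eq:55}. The remaining conditions of \eqnok{eq:55} then read $-\sum_{l\in\cF}\mu_l M_l(\bar{C}) \in N_{\Omega}(\bar{C})$ with $\sum \mu_l = 1$, $\mu_l \ge 0$, and the complementarity $\mu_l \perp (t^*_{\cF}(\bar{C}) - t^*_l(\bar{C}))$. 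Since the strict-feasibility hypothesis and Lemma~\ref{lem:subdiff} guarantee $M_l(\bar{C}) \in \partial t^*_l(\bar{C})$, this is precisely \eqnok{eq:56opta}. For (iv), the inequality $t^*_{\cF}(C+\DC(\rho)) \le r(\cF,C,M_{\cF}(C),\rho)$ should follow from concavity of each $t^*_l$: the linearization $t^*_l(C) + \langle \DC, M_l(C)\rangle$ used in \eqnok{eq:53} is a \emph{supporting hyperplane} of the concave function $t^*_l$ at $C$ (this is where $M_l(C)\in\partial t^*_l(C)$ and concavity combine), hence $t^*_l(C+\DC) \le t^*_l(C) + \langle \DC, M_l(C)\rangle$ for all $l$; taking the max over $l\in\cF$ gives $t^*_{\cF}(C+\DC(\rho)) \le r$. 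I would note that the Lipschitz/strict-interiority hypothesis from (i) is what keeps all the $t^*_l$ finite-valued and concave in a neighborhood so that this supporting-hyperplane inequality is valid there.

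The main obstacle I anticipate is the careful bookkeeping in (iii): I must verify that $t^*_{\cF}(C)$ coincides with the value of the reformulated objective $G_{\cF}$ at $\DC = 0$, so that the linear-interpolation-along-the-segment argument genuinely produces a factor of $\rho'/\rho$ rather than merely a weak inequality. This hinges on the convexity (piecewise linearity) of $G_{\cF}$ noted after \eqnok{eq:L.alt} together with the identity relating $r$ and $G_{\cF}$, and it is the one place where a sloppy comparison would give only monotonicity of $\Lambda$ itself (already proved in (ii)) rather than the sharper monotonicity of the ratio $\Lambda/\rho$. The other items are essentially direct consequences of convexity, concavity, and the subdifferential facts from Lemma~\ref{lem:subdiff}, and should go through with little friction.
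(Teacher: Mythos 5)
Your proposal is correct and takes essentially the same route as the paper's own proof: feasible-region monotonicity for (ii), the scaled-step convexity argument with $G_{\cF}(0;C,M_{\cF}(C))=t^*_{\cF}(C)$ for (iii), the collapse of the KKT conditions \eqnok{eq:55} with $\DC=0$, $\tau=0$ onto \eqnok{eq:56opta} for (i), and the supporting-hyperplane inequality of Lemma~\ref{lem:subdiff} for (iv). The only cosmetic slip is attributing the identity $G_{\cF}(0;C,M_{\cF}(C))=t^*_{\cF}(C)$ to ``complementarity''; it holds simply because each $M_l(C)$ attains the optimal value of \eqnok{eq:50}, so $\langle C,M_l(C)\rangle=t^*_l(C)$.
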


{\em Proof}.
  \noindent (i) If $r(\cF,\bar{C},M_{\cF}(\bar{C}),\rho)=t^*_{\cF}$
  for some $\rho>0$, then $\DC=0$ achieves the minimum in
  \eqnok{eq:53}, for $C=\bar{C}$. Hence, there exist $\mu_l$, $l \in
  \cF$ such that the optimality conditions \eqnok{eq:55} are satisfied
  with $\DC=0$ and $\tau=0$.  Thus, conditions \eqnok{eq:56opta} are
  satisfied with $\bar{M}_l=\bar{M}_l(\bar{C})$ and the same values of
  $\mu_l$, $l \in \cF$.

\medskip\noindent (ii) Trivial, as the size of the feasible region for
$L(\cF,C,M_{\cF}(C),\rho)$ increases with $\rho$.

\medskip\noindent (iii) We need to show that for $\rho_1$ and $\rho_2$
with $0 < \rho_1 < \rho_2$, we have
\[
\frac{\Lambda(\cF,C,M_{\cF}(C),\rho_1)}{\rho_1} \ge 
\frac{\Lambda(\cF,C,M_{\cF}(C),\rho_2)}{\rho_2}.
\]
Using the formulation \eqnok{eq:L.alt} of $L(\cF,C,M_{\cF}(C),\rho)$,
and in particular the convex function $G_{\cF}(\cdot;C,M_{\cF}(C))$
defined in \eqnok{eq:defG}, we have that
\[
G_{\cF}(0;C,M_{\cF}(C)) = \max_{l \in \cF} \, \langle C , M_l(C) \rangle = t^*_{\cF}(C).
\]
Given the solution $\DC(\rho_2)$ of $L(\cF,C,M_{\cF}(C),\rho_2)$, note
that $\frac{\rho_1}{\rho_2} \DC(\rho_2)$ is feasible for
$L(\cF,C,M_{\cF}(C),\rho_1)$. Since $\DC(\rho_1)$ is optimal for
$L(\cF,C,M_{\cF}(C),\rho_1)$, we have
\begin{align*}
G_{\cF}(\DC(\rho_1);C,M_{\cF}(C)) 
& \le G_{\cF}\!\left( \frac{\rho_1}{\rho_2} \DC(\rho_2); C,M_{\cF}(C) \right) \\
& \le \left( 1-\frac{\rho_1}{\rho_2} \right)\!G_{\cF} (0;C,M_{\cF}(C)) + 
\frac{\rho_1}{\rho_2} G_{\cF}(\DC(\rho_2);C,M_{\cF}(C)).
\end{align*}
The result follows by rearrangement of this expression, since 
\begin{align*}
\Lambda(\cF,C,M_{\cF}(C),\rho_1) &= G_{\cF}(0;C,M_{\cF}(C)) 
- G_{\cF}(\DC(\rho_1);C,M_{\cF}(C)), \\
\Lambda(\cF,C,M_{\cF}(C),\rho_2) &= G_{\cF}(0;C,M_{\cF}(C)) - 
G_{\cF}(\DC(\rho_2);C,M_{\cF}(C)).
\end{align*}


\medskip\noindent (iv) The result follows immediately from
Lemma~\ref{lem:subdiff} (iv), when we use the definition \eqnok{eq:52} and
the fact that
\[
r(\cF,C,M_{\cF}(C),\rho) = \max_{l \in \cF} \, t^*_l(C) + \langle \DC(\rho), M_l(C) \rangle. \qquad\endproof
\]

We show now that all accumulation points of a sequence $\{ C_k \}$ for
which 
\[
\Lambda(\cF,C_k,M_{\cF}(C_k),1) \to 0
\]
are Clarke-stationary for $P(\cF)$.
\begin{theorem} \label{th:stat} Suppose that for a given set $\cF
  \subset \{1,2,\dotsc,m\}$, $\{ C_k \}$ is a sequence of matrices in
  $\Omega$ converging to a limit $\bar{C}$ such that \eqnok{eq:50a}
  has a strictly feasible point for each $l \in \cF$. Suppose further
  that $\Lambda (\cF,C_k,M_{\cF}(C_k),1) \to 0$. Then $\bar{C}$ is
  Clarke-stationary for $P(\cF)$.
\end{theorem}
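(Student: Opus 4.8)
The plan is to reduce everything to Lemma~\ref{lem:tech}(i) by a limiting argument: I will extract from $\{C_k\}$ a subsequence along which the chosen dual solutions converge to a limit that is itself an optimal dual solution at $\bar C$, and then show that the predicted decrease at $\bar C$ vanishes, so that Clarke-stationarity follows directly from part (i) of that lemma with $\rho=1$.

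First I would secure convergence of the dual matrices. Since \eqnok{eq:50a} has a strictly feasible point at $\bar C$ for each $l \in \cF$, and strict positive definiteness is an open condition, the same strict feasibility holds at $C_k$ for $k$ large; hence Lemma~\ref{lem:subdiff}(iv) applies and each $t^*_l$ is Lipschitz on a fixed neighborhood of $\bar C$, with every solution $M_l(C_k)$ of \eqnok{eq:50} lying in $\partial t^*_l(C_k)$. Local Lipschitzness bounds these subgradients uniformly, so I may pass to a subsequence (not relabeled) with $M_l(C_k) \to \bar M_l$ for every $l \in \cF$. The constraints of \eqnok{eq:50} do not involve $C$, so each $\bar M_l$ is feasible for $P(l,\bar C)$; and since $\langle \bar C, \bar M_l\rangle = \lim_k \langle C_k, M_l(C_k)\rangle = \lim_k t^*_l(C_k) = t^*_l(\bar C)$ by continuity of $t^*_l$, each $\bar M_l$ is in fact optimal, i.e.\ a legitimate choice $M_l(\bar C)$.

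The crux is then to prove $\Lambda(\cF,\bar C,\bar M_{\cF},1)=0$, where $\bar M_{\cF} = \{\bar M_l : l \in \cF\}$. Writing $\Lambda = G_{\cF}(0;\cdot) - r(\cdot)$ via \eqnok{eq:defG}--\eqnok{eq:L.alt}, the first term is harmless: $G_{\cF}(0;C_k,M_{\cF}(C_k)) = t^*_{\cF}(C_k) \to t^*_{\cF}(\bar C) = G_{\cF}(0;\bar C,\bar M_{\cF})$. So it suffices to show $\limsup_k r(\cF,C_k,M_{\cF}(C_k),1) \le r(\cF,\bar C,\bar M_{\cF},1)$, which combined with $\Lambda(\cF,C_k,M_{\cF}(C_k),1)\to 0$ and $\Lambda \ge 0$ forces $\Lambda(\cF,\bar C,\bar M_{\cF},1)=0$. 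Let $\DC^*$ attain the minimum in $L(\cF,\bar C,\bar M_{\cF},1)$, so $\bar C + \DC^* \in \Omega$ and $\norm{\DC^*}\le 1$. Using the interior point $\bar C$ from \eqnok{eq:Cbar}, for fixed $t\in(0,1)$ set $Y_t := \bar C + (1-t)\DC^*$; this is a convex combination of $\bar C + \DC^* \in \bar\Omega$ and $\bar C \in \inT \bar\Omega$ that still meets the affine constraints, hence $Y_t \in \Omega$. Then $\DC_k := Y_t - C_k$ satisfies $C_k + \DC_k = Y_t \in \Omega$ and, since $C_k \to \bar C$ and $(1-t)\norm{\DC^*} \le 1-t < 1$, also $\norm{\DC_k} \le 1$ for $k$ large, so $\DC_k$ is feasible for $L(\cF,C_k,M_{\cF}(C_k),1)$. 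Consequently $r(\cF,C_k,M_{\cF}(C_k),1) \le G_{\cF}(\DC_k;C_k,M_{\cF}(C_k)) = \max_{l\in\cF}\langle Y_t, M_l(C_k)\rangle \to \max_{l\in\cF}\langle Y_t,\bar M_l\rangle$, and letting $t\downarrow 0$ and invoking continuity of $G_{\cF}(\cdot;\bar C,\bar M_{\cF})$ gives $\limsup_k r(\cF,C_k,M_{\cF}(C_k),1) \le G_{\cF}(\DC^*;\bar C,\bar M_{\cF}) = r(\cF,\bar C,\bar M_{\cF},1)$, as required.

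Finally, with $\Lambda(\cF,\bar C,M_{\cF}(\bar C),1)=0$ for the optimal dual solutions $\bar M_l = M_l(\bar C)$ and strict feasibility of \eqnok{eq:50a} in hand, Lemma~\ref{lem:tech}(i) yields that $\bar C$ is Clarke-stationary for $P(\cF)$. I expect the main obstacle to be the two limiting passages handled above: establishing that the non-unique dual solutions converge (along a subsequence) to a genuine optimal dual solution at $\bar C$, and the upper semicontinuity of the subproblem value $r$, where the limiting minimizer $\DC^*$ may be infeasible for the perturbed problems when $\bar C + \DC^*$ lies on the boundary of $\Omega$; the retraction toward the Slater point $\bar C$ via $Y_t$ is precisely what repairs feasibility.
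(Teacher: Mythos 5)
Your proposal is correct, and it follows the same overall skeleton as the paper's proof---extract convergent dual solutions $M_l(C_k)\to\bar M_l$ with $\bar M_l$ optimal for $P(l,\bar C)$, show that the predicted decrease vanishes at $\bar C$, and conclude via Lemma~\ref{lem:tech}(i)---but it executes both key steps with different tools. For the dual convergence, the paper invokes Theorem~\ref{th:limitsol} (uniform boundedness of the SDP solution sets near $\bar C$, plus optimality of accumulation points), whereas you obtain boundedness from Lemma~\ref{lem:subdiff}(iii)--(iv): the $M_l(C_k)$ are Clarke subgradients of the locally Lipschitz function $t^*_l$, hence uniformly bounded, and you verify optimality of the limit directly (the feasible set of \eqnok{eq:50} is closed and independent of $C$, and $\langle C_k,M_l(C_k)\rangle = t^*_l(C_k)\to t^*_l(\bar C)$). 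For the vanishing of the predicted decrease, the paper argues by contradiction: its candidate step $\DC_k'=\bar C-C_k+\DC'$ may violate the radius-$1$ bound, so it is made feasible for the radius-$2$ subproblem and the monotonicity of $\Lambda/\rho$ (Lemma~\ref{lem:tech}(iii)) is used to pass back to radius $1$. You instead prove upper semicontinuity of $r(\cF,\cdot,\cdot,1)$ directly by shrinking the limiting minimizer to $Y_t=\bar C+(1-t)\DC^*$, which restores radius-$1$ feasibility of $Y_t-C_k$ for large $k$, and then letting $t\downarrow 0$; this avoids Lemma~\ref{lem:tech}(iii) altogether at the cost of one extra limit, and both mechanisms are sound resolutions of the same difficulty. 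One small repair to your write-up: to conclude $Y_t\in\Omega$ you do not need (and cannot assert) that the limit $\bar C$ lies in $\inT\bar\Omega$, nor do you need the Slater point of \eqnok{eq:Cbar}; since $\bar C\in\Omega$ (because $\Omega$ is closed and $C_k\in\Omega$) and $\bar C+\DC^*\in\Omega$, convexity of $\Omega$ already gives $Y_t=t\bar C+(1-t)(\bar C+\DC^*)\in\Omega$. With that justification substituted, your argument is complete.
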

\begin{proof}
  We invoke Theorem~\ref{th:limitsol} to deduce that for all $l \in
  \cF$, the solution sets of $P(l,C_k)$ in \eqnok{eq:50} are uniformly
  bounded for all $k$ sufficiently large. Hence, we can identify
  subsequences of $\{ M_l(C_k) \}$ for each $l \in \cF$ that approach
  some limit $\bar{M}_l$, where by Theorem~\ref{th:limitsol} (ii),
  $\bar{M}_l$ is a solution of $P(l,\bar{C})$ for each $l \in \cF$. We
  can thus write $M_l(\bar{C}) = \bar{M}_l$ for each $l \in \cF$. By
  defining $M_{\cF}(C_k)$ and $M_{\cF}(\bar{C})$ in obvious ways, and
  taking a subsequence, we have that $M_{\cF}(C_k) \to
  M_{\cF}(\bar{C})$.

  We show next, by contradiction, that
  $\Lambda(\cF,\bar{C},M_{\cF}(\bar{C}),1)=0$.  If this claim is not
  true, there exists $\DC'$ such that
\[
\| \DC' \| \le 1, \quad
\bar{C}+\DC' \in \Omega, \quad
G_{\cF}(\DC';\bar{C},M_{\cF}(\bar{C})) \le t^*_{\cF}(\bar{C}) - \epsilon,
\]
for some $\epsilon>0$. Defining
\[
\DC_k' := \bar{C} - C_k + \DC',
\]
we have from $C_k \to \bar{C}$, $\| \DC_k \| \le 1$, and $C_k + \DC_k'
= \bar{C} + \DC' \in \Omega$ that $\DC_k'$ is feasible for
$L(\cF,C_k,M_{\cF}(C_k),2)$. Hence, invoking Lemma~\ref{lem:tech}
(iii), we have
\begin{align*}
\lim_k \, G_{\cF}(\DC_k';C_k,M_{\cF}(C_k)) & \ge
\lim_k \, t^*_{\cF}(C_k) - \Lambda(\cF,C_k,M_{\cF}(C_k),2) \\
& \ge 
\lim_k t^*_{\cF}(C_k) - 2 \Lambda(\cF,C_k,M_{\cF}(C_k),1)  \\
& = t^*_{\cF}(\bar{C}).
\end{align*}
The final limit above follows from the definition of $t^*_{\cF}$,
Lemma~\ref{lem:subdiff} (iv), and the assumption that
$\Lambda(\cF,C_k,M_{\cF}(C_k),1) \to 0$.  On the other hand, we have
from $C_k + \DC_k' = \bar{C} + \DC'$, the definition of $G_{\cF}$
\eqnok{eq:defG}, and the limit $M_{\cF}(C_k) \to M_{\cF}(\bar{C})$
that
\begin{align*}
\lim_k \, G_{\cF}(\DC_k';C_k,M_{\cF}(C_k)) 
& = \lim_k \, \max_{l \in \cF} \, \langle C_k+\DC_k',M_l(C_k) \rangle \\
&= \lim_k \, \max_{l \in \cF} \, \langle \bar{C}+\DC',M_l(C_k) \rangle \\
&=  \max_{l \in \cF} \, \langle \bar{C}+\DC',M_l(\bar{C}) \rangle \\
& \le
t^*_{\cF}(\bar{C})-\epsilon,
\end{align*}
where $\epsilon>0$ is defined above.  This yields the contradiction,
so we conclude that $\Lambda(\cF,\bar{C},M_{\cF}(\bar{C},1)=0$, as
claimed. Clarke stationarity of $\bar{C}$ for $P(\cF)$ now follows
from Lemma~\ref{lem:tech} (i).
\end{proof}

\subsection{Algorithm} \label{sec:alg}

We now define the algorithm for solving the problem $P$ defined by
\eqnok{eq:51}.
%
Note that in this general setting, $t^*_l(C)$ defined by
\eqnok{eq:50} is continuous on the set
\[
\Psi_l := \{ C \, : \, t^*_l(C)>-\infty \},
\]
which is closed and convex.
We make additional assumptions about the nature of the solutions
to the parametrized primal-dual pair \eqnok{eq:50}, \eqnok{eq:50a},
that do not hold in general, but which are satisfied for the
application we consider here.

\begin{assumption} \label{ass:sdpo} 
\begin{itemize}
\item[(i)] $t^*_l(C) > -\infty \;\; \Rightarrow \;\; t^*_l(C) \ge 0$.
\item[(ii)] If $t^*_l(C)>0$, then the dual \eqnok{eq:50a} has a strict
  feasible point.
\end{itemize}
\end{assumption}

It is an immediate consequence of Assumption~\ref{ass:sdpo} and
Lemma~\ref{lem:subdiff} that all points $C$ on the boundary of
$\Psi_l$ have $t^*_l(C)=0$.  We also have the following uniform
continuity result.
\begin{lemma} \label{lem:tlc} Suppose that $t^*_l(C)$ is defined by
  \eqnok{eq:50}, that Assumption~\ref{ass:sdpo} holds, and that
$\Omega$ has the form \eqnok{eq:COm}. Let
  $\bar{t}>0$ be given. Then for any $\epsilon>0$, there is $\delta>0$
  such that for all $\bar{C}\in \Omega$, all $C \in \Omega$ with $\|
  C-\bar{C} \| \le \delta$, and all $l=1,2,\dotsc,m$, the following
  conditions hold:
\begin{itemize}
\item[(i)] If $t^*_l(\bar{C}) \ge \bar{t}$, then $t^*_l(C) \ge
t^*_l(\bar{C}) - \epsilon$;
\item[(ii)] $t^*_l(C) \le \max(0,t^*_l(\bar{C})) + \epsilon$.
\end{itemize}
\end{lemma}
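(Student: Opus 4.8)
The plan is to reduce both inequalities to a single, more convenient statement: the \emph{truncated} overlap function
\[
h_l(C) := \max\{0,\, t^*_l(C)\}, \qquad l=1,2,\dotsc,m,
\]
(with the convention $\max\{0,-\infty\}=0$) is \emph{continuous} on the set $\Omega$, which is compact because it is closed by \eqnok{eq:COm} and bounded since $\bar\Omega$ is bounded. Continuity on a compact set upgrades to \emph{uniform} continuity, and since there are only finitely many indices $l$, I can take a common modulus $\delta := \min_{1\le l\le m}\delta_l$ that serves all $l$ simultaneously. Given this, parts (i) and (ii) drop out by elementary manipulation, so the real content is the continuity of each $h_l$.

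The main obstacle — and the only nontrivial step — is continuity of $h_l$ across the relative boundary of its domain $\Psi_l=\{C:t^*_l(C)>-\infty\}$. On $\ri \Psi_l$ we already have $h_l=t^*_l$ continuous, since (as noted after \eqnok{eq:50kkt}) $t^*_l$ is concave and hence continuous on the relative interior of its domain; and on the complement of the closed set $\Psi_l$ we have $h_l\equiv 0$. At a relative boundary point $\bar C$ the text records that $t^*_l(\bar C)=0$, so $h_l(\bar C)=0$, and I would show $h_l(C)\to 0$ as $C\to\bar C$ within $\Omega$ by combining two facts. First, $t^*_l$ is a pointwise infimum of the functions $C\mapsto\langle C,M_l\rangle$ over the ($C$-independent) feasible set of \eqnok{eq:50}; each such function is linear, hence $t^*_l$ is upper semicontinuous, giving $\limsup_{C\to\bar C}t^*_l(C)\le t^*_l(\bar C)=0$. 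Second, by Assumption~\ref{ass:sdpo}(i) every finite value of $t^*_l$ is nonnegative, so $h_l\ge 0$. Together these yield $0\le\liminf h_l(C)\le\limsup h_l(C)\le\max\{0,0\}=0$, so $h_l(C)\to 0=h_l(\bar C)$. This is precisely the step that relies on Assumption~\ref{ass:sdpo} and on the boundary-value property; everything else is routine.

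Once continuity (hence uniform continuity) of each $h_l$ on $\Omega$ is in hand, I would, given $\epsilon>0$ and $\bar t>0$, apply it with tolerance $\epsilon':=\min\{\epsilon,\bar t/2\}$ and take the resulting common $\delta$. For part (ii), for any $\bar C,C\in\Omega$ with $\|C-\bar C\|\le\delta$ I get
\[
t^*_l(C)\le h_l(C)\le h_l(\bar C)+\epsilon'\le\max\{0,t^*_l(\bar C)\}+\epsilon,
\]
which is exactly the claim. For part (i), if $t^*_l(\bar C)\ge\bar t$ then $h_l(\bar C)=t^*_l(\bar C)$, and uniform continuity gives $h_l(C)\ge t^*_l(\bar C)-\epsilon'\ge\bar t/2>0$; since $h_l(C)>0$ forces $t^*_l(C)=h_l(C)$, I conclude $t^*_l(C)\ge t^*_l(\bar C)-\epsilon'\ge t^*_l(\bar C)-\epsilon$. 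Note the single choice $\epsilon'=\min\{\epsilon,\bar t/2\}$ handles both parts at once.

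For completeness I note an alternative route that I expect to be messier and would avoid. Any solution $M_l(C)$ of \eqnok{eq:50} lies in $\partial t^*_l(C)$ (Lemma~\ref{lem:subdiff}), so concavity gives the two-sided bound $|t^*_l(C)-t^*_l(\bar C)|\le\max\{\|M_l(C)\|,\|M_l(\bar C)\|\}\,\|C-\bar C\|$ whenever both points lie in $\Psi_l$; a uniform bound $\|M_l(C)\|\le B$ on the superlevel set $\{t^*_l\ge\bar t/2\}$ (obtainable from the dual Slater margin of Assumption~\ref{ass:sdpo}(ii), or from Lemma~\ref{lem:subdiff}(iv) together with compactness) would then give a uniform Lipschitz constant. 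The difficulty is that such bounds degrade as $t^*_l(C)\downarrow 0$ and, more seriously, say nothing about parameters $C$ lying just inside the boundary of $\Psi_l$ — exactly the regime that the truncation argument above disposes of for free. I would therefore prefer the uniform-continuity argument.
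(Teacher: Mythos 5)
Your strategy is sound and genuinely different from the paper's. The paper proves the two parts separately: for (i) it notes that $t^*_l$ is continuous at every point of the compact superlevel set $\{C\in\Omega : t^*_l(C)\ge\bar t\}$ (this is where Assumption~\ref{ass:sdpo}(ii) and Lemma~\ref{lem:subdiff}(iv) enter) and upgrades to uniform continuity by compactness; for (ii) it runs a sequential-compactness argument by contradiction, extracting $C_r\to\hat C$ with $t^*_l(\hat C)\ge\epsilon$ and using continuity at $\hat C$ to contradict the assumed violation. Your single statement --- uniform continuity of the truncated function $h_l=\max\{0,t^*_l\}$ on the compact set $\Omega$ --- subsumes both parts at once, eliminates the contradiction argument, and isolates the two structural facts doing the work: upper semicontinuity of $t^*_l$ (as an infimum of linear functions of $C$ over the $C$-independent feasible set of \eqnok{eq:50}, an observation the paper never makes explicit) and nonnegativity of finite values from Assumption~\ref{ass:sdpo}(i). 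Your derivation of (i) and (ii) from uniform continuity of $h_l$ with the single tolerance $\epsilon'=\min\{\epsilon,\bar t/2\}$ is correct.

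One step, however, needs a stronger justification than the one you give: continuity of $h_l$ at points of $\ri\Psi_l$. Concavity yields continuity of $t^*_l$ only \emph{relative to} $\ri\Psi_l$; if $\Psi_l$ were lower-dimensional, a point $\bar C\in\ri\Psi_l$ with $t^*_l(\bar C)>0$ could be approached from outside $\Psi_l$, where $h_l\equiv 0$, and nothing you cite at that step rules out a jump. The patch uses tools you already invoke elsewhere: when $t^*_l(\bar C)>0$, Assumption~\ref{ass:sdpo}(ii) gives a strictly feasible point for \eqnok{eq:50a}, strict feasibility is an open condition in $C$ (so $\bar C$ in fact lies in the ambient interior $\inT\Psi_l$, and weak duality bounds $t^*_l$ below nearby), and Lemma~\ref{lem:subdiff}(iv) then gives Lipschitz continuity of $t^*_l$ on a full neighborhood of $\bar C$. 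With that observation the cleanest case split is by value rather than by topology: if $h_l(\bar C)>0$, use the Slater/Lipschitz argument; if $h_l(\bar C)=0$ (covering both $t^*_l(\bar C)=0$ and $t^*_l(\bar C)=-\infty$), use your usc-plus-nonnegativity argument, which then needs neither the closedness of $\Psi_l$ nor the boundary-value fact quoted from the text.
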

\begin{proof}
  Note first that since $\Omega$ is compact, the set 
  $\Psi_l(\bar{t}) := \{ C \in \Omega \mid t^*_l(C) \ge \bar{t} \}$ is
  also compact, for any $l\in \{1,2,\dotsc,m\}$ and any $\bar{t} > 0$.
Since $t^*_l(\cdot)$ is continuous at every point of this set, under
the stated assumptions, it is uniformly continuous on this set. Thus
for any $\epsilon>0$, there is a value $\delta = \delta_l(\epsilon)>0$
such that (i) holds. Thus it is sufficient for (i) to define $\delta$
to be $\min_{l=1,2,\dotsc,m} \delta_l(\epsilon)$. 

For (ii), we suppose for contradiction that for some $\epsilon>0$,
there is no $\delta>0$ with the property claimed. Thus, for any
sequence $\{ \delta_r \}$ with $\delta_r \downarrow 0$, we can find
$\bar{C}_r \in \Omega$, $C_r \in \Omega$ with $\| C_r - \bar{C}_r \|
\le \delta_r$, and $l \in \{1,2,\dotsc,m\}$ such that
\begin{equation} \label{eq:tlc.1}
t^*_l(C_r) > \max(0, t^*_l(\bar{C}_r)) + \epsilon.
\end{equation}
By taking a subsequence if necessary, we can assume that this
inequality holds for some fixed $l \in \{1,2,\dotsc,m\}$. Since all
$C_r$ belong to the compact set $\Omega \cap \{ C \mid t^*_l(C) \ge
\epsilon \}$, we can assume (by taking another subsequence if
necessary) that $C_r \to \hat{C}$, for some $\hat{C}$ with
$t^*_l(\hat{C}) \ge \epsilon$. It follows that $\bar{C}_r \to \hat{C}$
also, so using continuity of $t^*_l$ and taking limits in both sides
of \eqnok{eq:tlc.1}, we obtain
\[
t^*_l(\hat{C}) = \lim_{r \to \infty} \, t^*_l(C_r) \ge \lim_{r \to \infty}
\, \max(0, t^*_l(\bar{C}_r)) + \epsilon  = \max(0,t^*_l(\hat{C})) + \epsilon
\ge t^*_l(\hat{C}) + \epsilon,
\]
a contradiction.
\end{proof}

A key issue in implementing the algorithm is to decide which subset
$\cF$ of the overlapping pairs to use in calculating the step $\DC$ in
\eqnok{eq:53}. Clearly, $\cF$ should include the indices $l$ for which
the overlaps between the corresponding ellipsoid pairs are at or near
the maximum. It could also include other indices with positive (but
smaller) overlap. Clearly, it cannot contain any non-overlapping
ellipsoids, as the problem $P(l,C)$ \eqnok{eq:50} has no solution in
this case, so $M_l(C)$ is not defined. We settle on the following
requirement, which depends on parameters $\eta_1, \eta_2 \in (0,1)$
with $0 < \eta_1 < \eta_2 <1$: Given $C_k$ for which $t^*(C_k)>0$ (see
definition \eqnok{eq:51}), we choose $\cF_k$ to satisfy:
\beq \label{eq:Fk}
\{ l \, : \, t^*_l(C_k)  \ge \eta_2 t^*(C_k) \} \subset \cF_k \subset
\{ l \, : \, t^*_l(C_k) \ge \eta_1 t^*(C_k) \}.
\eeq

Algorithm~\ref{alg:xxx} describes our method. It follows a standard
trust-region framework, though its analysis is a little
non-standard. At each iteration, we calculate a candidate step $\DC_k$
by solving the linearized subproblem \eqnok{eq:53} with trust-region
radius $\rho_k$, and calculate the predicted reduction
$\Lambda(\cF_k,C_k, M_{\cF_k}(C_k), \rho_k)$ \eqnok{eq:Lam} expected
from this step. If the actual objective achieves at least a fraction
$c_1$ of this decrease (for $c_1 \in (0,1)$), we accept the step. If
in fact the improvement is at least a larger fraction $c_2$ of the
expected decrease, we may increase the trust-region radius for the
next iteration.  Otherwise, we do not take the step, but rather shrink
the trust-region radius and proceed to the next iteration.

\begin{algorithm} \label{alg:xxx}
\caption{Packing Ellipsoids by Minimizing Overlap}
\begin{algorithmic}
  \STATE Given $\Omega \subset S \R^{n \times n}$ compact; $\eta \in
  (0,1)$; $c_1$ and $c_2$ with $0 < c_1 < c_2 < 1$; $\phi_1$ and
  $\phi_2$ with $0 < \phi_1 < 1 < \phi_2$; and
  $\rhomax >0$;
\STATE Choose $C_0 \in \Omega$, $\rho_0 \in (0,\rhomax]$;
\FOR{$k=0,1,2,\dotsc$}
\STATE Define $\cF_k$  as in \eqnok{eq:Fk}; 
\STATE Solve $L(\cF_k,C_k,M_{\cF_k}(C_k),\rho_k)$ \eqnok{eq:53} to obtain $\DC_k$; 
\STATE Compute predicted decrease  $\Lambda(\cF_k,C_k,M_{\cF_k}(C_k),\rho_k)$ from
\eqnok{eq:Lam};
\IF{$\DC_k=0$ or $t^*(C_k+\DC_k)=0$}
\STATE {\bf stop};
\ENDIF
\IF{$t^*(C_k+\DC_k) \le t^*(C_k) - c_1 \Lambda(\cF_k,C_k,M_{\cF_k}(C_k),\rho_k)$}
\STATE $ C_{k+1} \leftarrow C_k + \DC_k$; \\
\IF{$t^*(C_k+\DC_k) \le t^*(C_k) - c_2 \Lambda(\cF_k,C_k,M_{\cF_k}(C_k),\rho_k)$}
\STATE $\rho_{k+1}  \leftarrow \min( \phi_2 \rho_k, \rhomax)$; 
\ENDIF
\ELSE
\STATE $C_{k+1} \leftarrow C_k$;
\STATE $\rho_{k+1} \leftarrow \phi_1 \rho_k$; 
\ENDIF
\ENDFOR
\end{algorithmic}
\end{algorithm}

We now show that when the values $t^*(C_k)$ are bounded away from
zero, there is a positive threshold such that any step $\DC_k$ with
norm smaller than this threshold will be accepted.

\begin{lemma} \label{lem:rhobar} Suppose that
  Assumption~\ref{ass:sdpo} holds and let $\bar{t}>0$ be given. Let
  $C_k$ be any iterate with $t^*(C_k) \ge \bar{t}$ such that $C_k$ is
  not Clarke-stationary for the problem $P$ defined in \eqnok{eq:51},
  and suppose that $\cF_k$ satisfies \eqnok{eq:Fk}. Then there exists
  a threshold value $\bar{\rho}_{\bar{t}}>0$ (independent of $C_k$)
  such that
  \begin{equation} \label{eq:trstep} 
t^*(C_k+\DC(\rho)) \le t^*(C_k) -
    \Lambda(\cF_k,C_k,M_{\cF_k}(C_k),\rho) < t^*(C_k)
\end{equation}
whenever $\|\DC(\rho)\|$ is a solution of $L(\cF_k,C_k,M_{\cF_k}(C_k),\rho)$ with $\rho
\in (0,\bar{\rho}_{\bar{t}}]$.
\end{lemma}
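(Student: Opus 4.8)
The plan is to verify the two assertions in \eqnok{eq:trstep} separately: the strict inequality $t^*(C_k)-\Lambda(\cF_k,C_k,M_{\cF_k}(C_k),\rho)<t^*(C_k)$, which amounts to showing $\Lambda(\cF_k,C_k,M_{\cF_k}(C_k),\rho)>0$ for every $\rho>0$, and the decrease inequality $t^*(C_k+\DC(\rho))\le t^*(C_k)-\Lambda(\cF_k,C_k,M_{\cF_k}(C_k),\rho)$. First I would record two facts used throughout. Since $\cF_k$ contains every index with $t^*_l(C_k)\ge\eta_2 t^*(C_k)$ by \eqnok{eq:Fk}, and since the index attaining $t^*(C_k)$ qualifies, we have $t^*_{\cF_k}(C_k)=t^*(C_k)$; writing $r:=r(\cF_k,C_k,M_{\cF_k}(C_k),\rho)$ for the optimal value of the subproblem, the definition \eqnok{eq:Lam} gives $\Lambda=t^*(C_k)-r$, so the decrease inequality is equivalent to $t^*(C_k+\DC(\rho))\le r$. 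Moreover every $l\in\cF_k$ satisfies $t^*_l(C_k)\ge\eta_1 t^*(C_k)\ge\eta_1\bar{t}>0$, so by Assumption~\ref{ass:sdpo}(ii) the dual \eqnok{eq:50a} is strictly feasible and (via Lemma~\ref{lem:subdiff}(iv)) $M_l(C_k)\in\partial t^*_l(C_k)$, placing us in the hypotheses of Lemma~\ref{lem:tech}.

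For the strict inequality I would argue that non-stationarity of $C_k$ for $P$ forces $\Lambda>0$. The indices attaining the maximum in $t^*(C_k)$ all have $t^*_l(C_k)=t^*(C_k)\ge\eta_2 t^*(C_k)$, hence lie in $\cF_k$; thus the active set of $P(\cF_k)$ at $C_k$ coincides with that of $P$, and because inactive multipliers vanish by complementarity in \eqnok{eq:56opta}, a stationarity certificate for $P(\cF_k)$, extended by zero multipliers, is exactly a stationarity certificate for $P$. Consequently $C_k$ is not Clarke-stationary for $P(\cF_k)$, and the contrapositive of Lemma~\ref{lem:tech}(i) yields $\Lambda(\cF_k,C_k,M_{\cF_k}(C_k),\rho)>0$ for all $\rho>0$ (recall $\Lambda\ge0$ always).

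For the decrease inequality I would bound $t^*(C_k+\DC(\rho))=\max_l t^*_l(C_k+\DC(\rho))$ by treating $l\in\cF_k$ and $l\notin\cF_k$ separately. For $l\in\cF_k$, Lemma~\ref{lem:tech}(iv) applied at $C_k$ gives $t^*_{\cF_k}(C_k+\DC(\rho))\le r$ directly. For $l\notin\cF_k$ we have $t^*_l(C_k)<\eta_2 t^*(C_k)$, hence $\max(0,t^*_l(C_k))\le\eta_2 t^*(C_k)$ using Assumption~\ref{ass:sdpo}(i); applying the uniform-continuity bound Lemma~\ref{lem:tlc}(ii) with $\epsilon:=\tfrac12(1-\eta_2)\bar{t}$ and its associated $\delta$, any step with $\|\DC(\rho)\|\le\rho\le\delta$ obeys $t^*_l(C_k+\DC(\rho))\le\eta_2 t^*(C_k)+\tfrac12(1-\eta_2)\bar{t}$. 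On the other hand, picking a maximizing index $l^*$ (so $t^*_{l^*}(C_k)=t^*(C_k)$ and $l^*\in\cF_k$) and using $\|\DC(\rho)\|\le\rho$ gives the lower bound $r\ge t^*(C_k)+\langle\DC(\rho),M_{l^*}(C_k)\rangle\ge t^*(C_k)-\rho B$, where $B$ is a uniform bound on $\|M_l(C_k)\|$. Combining these with $t^*(C_k)\ge\bar{t}$ and setting $\bar{\rho}_{\bar{t}}:=\min(\delta,\,(1-\eta_2)\bar{t}/(2B))$, one checks that for $\rho\le\bar{\rho}_{\bar{t}}$ the outer estimate $\eta_2 t^*(C_k)+\tfrac12(1-\eta_2)\bar{t}$ is at most $t^*(C_k)-\tfrac12(1-\eta_2)\bar{t}\le r$, so $t^*_l(C_k+\DC(\rho))\le r$ for $l\notin\cF_k$ as well; taking the maximum over all $l$ gives $t^*(C_k+\DC(\rho))\le r=t^*(C_k)-\Lambda$.

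The main obstacle is producing the uniform bound $B$, on which the independence of $\bar{\rho}_{\bar{t}}$ from $C_k$ rests. I would obtain it by a compactness argument: the superlevel set $\{C\in\Omega:t^*(C)\ge\bar{t}\}$ is compact, since each $t^*_l$ is concave and upper semicontinuous and hence so is $t^*$; and if $\|M_l(C^{(j)})\|\to\infty$ along some sequence in this set, then passing to a subsequence with $C^{(j)}\to\hat C$ and a fixed index $l$ (necessarily with $t^*_l(\hat C)\ge\eta_1\bar{t}>0$) would contradict the local uniform boundedness of the solution sets of $P(l,C)$ near $\hat C$ guaranteed by Theorem~\ref{th:limitsol}. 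The remaining verifications --- that $\delta$ from Lemma~\ref{lem:tlc} is genuinely uniform in $C_k$ (it depends only on $\bar{t}$ and $\epsilon$) and the elementary arithmetic assembling $\bar{\rho}_{\bar{t}}$ --- are routine.
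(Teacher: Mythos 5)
Your proposal is correct, and its first half --- deducing $\Lambda(\cF_k,C_k,M_{\cF_k}(C_k),\rho)>0$ from non-stationarity of $C_k$ for $P$, via the transfer of stationarity certificates between $P(\cF_k)$ and $P$ and the contrapositive of Lemma~\ref{lem:tech}~(i) --- is exactly the paper's argument. The second half, however, takes a genuinely different route. The paper never bounds the dual matrices: it uses \emph{both} parts of Lemma~\ref{lem:tlc} in a sandwich argument, showing that for $\|\DC\|\le\bar{\rho}_{\bar{t}}$ every index attaining the maximum at $C_k$ still satisfies $t^*_l(C_k+\DC)\ge t^*(C_k)(1+\eta_2)/2$ (part (i)), while every $l\notin\cF_k$ satisfies $t^*_l(C_k+\DC)\le t^*(C_k)(1+\eta_2)/2$ (part (ii)); hence the global maximum after the step is attained inside $\cF_k$, i.e.\ $t^*(C_k+\DC)=t^*_{\cF_k}(C_k+\DC)$, and a single application of Lemma~\ref{lem:tech}~(iv) finishes the proof with $\bar{\rho}_{\bar{t}}$ equal to the $\delta$ of Lemma~\ref{lem:tlc}. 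You instead discard Lemma~\ref{lem:tlc}~(i) and compare the outside indices directly against a lower bound $r\ge t^*(C_k)-\rho B$ on the subproblem value, which forces you to manufacture a uniform bound $B$ on $\|M_l(C_k)\|$ via compactness of the superlevel set and Theorem~\ref{th:limitsol} (machinery the paper deploys only later, in Theorem~\ref{th:stat}), and yields the threshold $\min(\delta,(1-\eta_2)\bar t/(2B))$. Your compactness argument for $B$ is sound (upper semicontinuity of each $t^*_l$, hence of $t^*$, closes the superlevel set; Assumption~\ref{ass:sdpo}~(ii) plus Theorem~\ref{th:limitsol} kills any unbounded sequence of dual solutions), so the proof goes through; the trade-off is that the paper's route is more economical and keeps the threshold independent of any dual-norm estimate, while yours is more quantitative, exhibiting explicitly how far the linearized model value $r$ can fall below $t^*(C_k)$ over a step of size $\rho$. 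One small slip worth fixing: the maximum of the concave functions $t^*_l$ inherits upper semicontinuity but \emph{not} concavity, so the phrase ``hence so is $t^*$'' should claim only upper semicontinuity --- which is all your argument actually uses.
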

\begin{proof}
  Note first that if $C_k$ were Clarke-stationary for $P(\cF_k)$,
  given that $\cF_k$ contains all the indices $l$ for which
  $t^*_l(C_k)$ attains the maximum $t^*(C_k)$, we would have that
  $C_k$ is also Clarke-stationary for $P$, which we have assumed is
  not the case.  From Assumption~\ref{ass:sdpo} and Lemma~\ref{lem:tech} (i) we have therefore that
  $\Lambda(\cF_k,C_k,M_{\cF_k}(C_k),\rho)>0$ for all $\rho>0$ and all
  solutions $M_l(C_k)$ to \eqnok{eq:50} with $C=C_k$ and $l \in
  \cF_k$.

  Now define $\epsilon = \bar{t}(1-\eta_2)/2$, and let
  $\bar{\rho}_{\bar{t}}$ be the corresponding (positive) value of
  $\delta$ from Lemma~\ref{lem:tlc}. Consider any $\DC$ such that $\|
  \DC\| \le \bar{\rho}_{\bar{t}}$. For indices $l$ such that
  $t^*_l(C_k) = t^*(C_k) \ge \bar{t}$, we have from
  Lemma~\ref{lem:tlc} (i) that
\[
t^*_l(C_k+\DC) \ge t^*(C_k) - \bar{t}(1-\eta_2)/2 \ge t^*(C_k)
(1+\eta_2)/2.
\]
For indices $l \notin \cF_k$, we have $t^*_l(C_k) \le \eta_2
t^*(C_k)$, and so from Lemma~\ref{lem:tlc} it follows
that
\[
t^*_l(C_k+\DC) \le \max(0, t^*_l(C_k)) + \bar{t}(1-\eta_2)/2 \le
\eta_2 t^*(C_k) + \bar{t}(1-\eta_2)/2 \le t^*(C_k)
(1+\eta_2)/2.
\]
Hence, for $\| \DC \| \le \bar{\rho}_{\bar{t}}$, the index $l$ for
which $t^*(C_k + \DC) = t^*_l(C_k+\DC)$ comes from the set $\cF_k$,
that is,
\[
t^*_{\cF_k} (C_k+\DC) = t^*(C_k+\DC).
\]
So choosing $\rho \in (0,\bar{\rho}_{\bar{t}}]$ and setting
$\DC(\rho)$ to be the solution of $L(\cF_k,C_k,M_{\cF_k}(C_k),\rho)$, we have
 from Lemma~\ref{lem:tech} (iv)  that 
\begin{align*}
t^*(C_k+\DC(\rho)) & = t^*_{\cF_k} (C_k+\DC(\rho))  \\
& \le t^*_{\cF_k}(C_k) - 
\Lambda (\cF_k,C_k,M_{\cF_k}(C_k),\rho) \\
&  < t^*_{\cF_k}(C_k)  \\
& = t^*(C_k),
\end{align*}
as claimed.
\end{proof}

The inequality \eqnok{eq:trstep} satisfies the step acceptance
conditions in Algorithm~\ref{alg:xxx}, since $0<c_1<c_2<1$. It follows
immediately that for any $C_k$ with $t^*(C_k)>0$, the algorithm cannot
``get stuck'' by performing infinitely many unsuccessful iterations
--- eventually it will decrease $\rho$ to the point where the step
acceptance condition holds.

We now prove the main convergence result.
\begin{theorem} \label{th:conv} Suppose that Assumption~\ref{ass:sdpo}
  holds. Then either Algorithm~\ref{alg:xxx} terminates finitely, or
  else it generates an infinite sequence of iterates $\{ C_k \}$ for
  which accumulation points exist, and all accumulation points are
  Clarke-stationary points of $P$. When $t^*(C_k) \downarrow 0$, all
  accumulation points are in fact global solutions of $P$.
\end{theorem}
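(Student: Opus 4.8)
The plan is to separate the two exit modes of Algorithm~\ref{alg:xxx} and, for an infinite run, to split on the monotone limit $t^\infty$ of $\{t^*(C_k)\}$. Finite termination is immediate: if $\DC_k=0$ then $\DC=0$ solves \eqnok{eq:53}, so $r(\cF_k,C_k,M_{\cF_k}(C_k),\rho_k)=t^*_{\cF_k}(C_k)$ and hence $\Lambda(\cF_k,C_k,M_{\cF_k}(C_k),\rho_k)=0$; Lemma~\ref{lem:tech}(i) makes $C_k$ Clarke-stationary for $P(\cF_k)$, and since \eqnok{eq:Fk} forces $\cF_k$ to contain every index attaining the maximum in $t^*(C_k)$, the reasoning already used in the proof of Lemma~\ref{lem:rhobar} upgrades this to Clarke-stationarity for $P$. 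If instead $t^*(C_k+\DC_k)=0$, then no pair of ellipsoids overlaps at $C_k+\DC_k$, which is a global solution.

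For an infinite run, accumulation points exist because $\Omega$ is compact. The values $\{t^*(C_k)\}$ are nonincreasing (accepted steps decrease them through the acceptance test, rejected steps leave $C_k$ fixed) and bounded below by $0$, so they converge to some $t^\infty\ge 0$; we may assume no $C_k$ is Clarke-stationary for $P$, for otherwise $\Lambda(\cF_k,C_k,M_{\cF_k}(C_k),\rho_k)=0$ and the algorithm terminates. Fix an accumulation point $\bar C=\lim_{k\in\cS}C_k$ and suppose first that $t^\infty>0$; put $\bar t=t^\infty$, so $t^*(C_k)\ge\bar t$ throughout. The linchpin is Lemma~\ref{lem:rhobar}: since a step of radius $\rho\le\bar{\rho}_{\bar t}$ at a nonstationary iterate is always accepted, a rejection forces $\rho_k>\bar{\rho}_{\bar t}$; as $\rho$ shrinks only on rejection and only by the factor $\phi_1$, an easy induction yields the uniform bound $\rho_k\ge\rho_{\min}:=\min(\rho_0,\phi_1\bar{\rho}_{\bar t})>0$ and rules out infinitely many consecutive rejections, so infinitely many steps are accepted. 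Telescoping the acceptance inequality gives $\sum_{\text{accepted }k}c_1\Lambda(\cF_k,C_k,M_{\cF_k}(C_k),\rho_k)\le t^*(C_0)-t^\infty<\infty$, whence $\Lambda(\cF_k,C_k,M_{\cF_k}(C_k),\rho_k)\to 0$ along accepted steps; combining $\rho_k\ge\rho_{\min}$ with the monotonicity of $\Lambda$ in $\rho$ (Lemma~\ref{lem:tech}(ii)) and of $\Lambda/\rho$ in $\rho$ (Lemma~\ref{lem:tech}(iii)) converts this into $\Lambda(\cF_k,C_k,M_{\cF_k}(C_k),1)\to 0$. Since the iterate is constant between acceptances, $\bar C$ is an accumulation point of the accepted iterates; refining $\cS$ to accepted steps and further so that $\cF_k\equiv\cF$ (only finitely many index sets occur), Theorem~\ref{th:stat} makes $\bar C$ Clarke-stationary for $P(\cF)$. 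Because each $l\in\cF$ satisfies $t^*_l(C_k)\ge\eta_1 t^*(C_k)\ge\eta_1\bar t>0$, Assumption~\ref{ass:sdpo}(ii) and the Lipschitz continuity of Lemma~\ref{lem:subdiff} give $t^*(\bar C)=\bar t$ with $\{l:t^*_l(\bar C)=t^*(\bar C)\}\subset\cF$, so $P(\cF)$-stationarity upgrades to $P$-stationarity exactly as in the finite case.

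If $t^\infty=0$, I would show $t^*(\bar C)\le 0$ directly: were $t^*(\bar C)>0$, its maximizing index $l_0$ would have $t^*_{l_0}(\bar C)>0$, hence $t^*_{l_0}$ would be Lipschitz near $\bar C$ (Assumption~\ref{ass:sdpo}(ii), Lemma~\ref{lem:subdiff}) and so $t^*_{l_0}(C_k)\to t^*_{l_0}(\bar C)>0$, contradicting $t^*_{l_0}(C_k)\le t^*(C_k)\to 0$. Thus no pair of ellipsoids has positive overlap at $\bar C$, i.e.\ $\bar C$ is a global solution of $P$.

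The hard part will be the bookkeeping that links the subproblem quantities back to $P$ itself. Two points need care: normalizing the trust-region radius, so that the summable predicted decreases $\Lambda(\cdot,\rho_k)$ taken at the possibly shrinking radii $\rho_k$ still deliver $\Lambda(\cdot,1)\to 0$ as required by Theorem~\ref{th:stat}; and controlling the varying active sets $\cF_k$, so that the reduced problem $P(\cF)$ on which stationarity is obtained provably contains every index attaining the maximum at the limit $\bar C$ and therefore certifies stationarity for $P$. The uniform lower bound $\rho_{\min}>0$ furnished by Lemma~\ref{lem:rhobar} is what makes both reductions go through.
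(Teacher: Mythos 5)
Your proposal is correct and follows essentially the same route as the paper's proof: compactness of $\Omega$ for accumulation points, Lemma~\ref{lem:rhobar} to get the uniform radius bound $\rho_k \ge \min(\rho_0,\phi_1\bar{\rho}_{\bar t})$ and infinitely many accepted steps, Lemma~\ref{lem:tech}~(ii)--(iii) to convert summable predicted decreases at radius $\rho_k$ into $\Lambda(\cF_k,C_k,M_{\cF_k}(C_k),1)\to 0$, a subsequence with constant $\cF_k\equiv\cF$, and Theorem~\ref{th:stat} at the limit. The only differences are presentational (telescoping versus the paper's direct limit of successive decreases) and that you spell out two steps the paper treats as obvious or implicit, namely the finite-termination case via Lemma~\ref{lem:tech}~(i) and the argument that the maximizing indices at $\bar C$ lie in $\cF$, so that $P(\cF)$-stationarity yields $P$-stationarity.
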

\begin{proof}
  The finite termination cases are obvious, so we focus on the case of
  an infinite sequence $\{ C_k \}$. Since all iterates are confined to
  the compact set $\Omega$, accumulation points of sequence $\{ C_k\}$
  exist. Note that the sequence of function values $\{ t^*(C_k)\}$ is
  decreasing. The final statement of the theorem is self-evident, as
  this case indicates convergence to points at which there are no
  overlaps between ellipsoids. Hence, we focus on the case
  in which there exists $\bar{t}>0$ such that $t^*(C_k) \ge \bar{t}$
  for all $k$.  From Lemma~\ref{lem:rhobar}, we see that at each
  iteration $k$, the trust-region radius $\rho_k$ will generate a
  successful step $\DC_k$ whenever it falls below
  $\bar{\rho}_{\bar{t}}$. Hence, since the algorithm decreases $\rho$
  by a factor of $\phi_1$ after each unsuccessful step, we have that
\beq \label{eq:phi1rho}
  \rho_k \ge \min (\rho_0, \phi_1 \bar{\rho}_{\bar{t}}), \quad
\mbox{for all $k$.}
\eeq

  In considering accumulation points of the sequence $\{ C_k \}$ we
  can remove all repeated entries from the sequence.  These repeats
  arise from unsuccessful steps (for which the acceptance condition
  was not satisfied), and the accumulation points of the sequence are
  the same whether the repeated entries are present or not. Note that
  there must be infinitely many successful steps since, as we note in
  the comment after Lemma~\ref{lem:rhobar}, the algorithm must
  eventually move away from any non-stationary point $C_k$ with
  $t^*(C_k)>0$. We denote the subsequence of successful iterates by
  $\cS$.

  At a successful iteration $k \in \cS$, we have
\begin{align*}
t^*(C_{k+1})  &= t^*(C_k + \DC_k) \\
& \le t^*(C_k) - c_1 \Lambda (\cF_k, C_k, M_{\cF_k}(C_k), \rho_k) \\
& \le t^*(C_k) - c_1 \min(\rho_k,1) \Lambda(\cF_k,C_k, M_{\cF_k}(C_k), 1) \\
& \le t^*(C_k) - c_1 \min(\rho_0,\phi_1 \bar{\rho}_{\bar{t}},1) \Lambda(\cF_k,C_k, M_{\cF_k}(C_k), 1),
\end{align*}
where we used Lemma~\ref{lem:tech} (ii) and (iii) to derive the second
inequality, and the last
inequality comes from \eqnok{eq:phi1rho}.
Since the sequence $\{ t^*(C_k) \}$ is decreasing and
bounded below (by $\bar{t}$), we have $0 < t^*(C_k)-t^*(C_{k+1})
\downarrow 0$, so by rearranging  and using the
fact that $\min(\rho_0,\phi_1 \bar{\rho}_{\bar{t}},1)>0$, we have
\begin{equation} \label{eq:limlam}
\lim_{k \in \cS} \, \Lambda(\cF_k,C_k, M_{\cF_k}(C_k), 1) =0.
\end{equation}

Now suppose that $\bar{C} \in \Omega$ is an accumulation point of the
full sequence $\{ C_k \}$. As noted above, it must also be an
accumulation point of the ``successful iterate'' sequence $\{C_k \}_{k
  \in \cS}$. So by taking a further subsequence $\cS' \subset \cS$, we
have $\lim_{k \in \cS'} \, C_k = \bar{C}$. Since there is only a finite
number of possibilities for the set $\cF_k$, we can take another
subsequence $\cS'' \subset \cS'$ such that, in addition, $\cF_k \equiv
\cF$ for all $k \in \cS''$. By the definition \eqnok{eq:Fk}, we have
that $t^*_l(C_k) \ge \eta_1 t^*(C_k) \ge \eta_1 \bar{t}$ for all $l \in
\cF_k$ and all $k \in \cS''$. Thus, using continuity of $t^*_l$, we have that 
\[
t^*_l(\bar{C}) \ge \eta_1 \bar{t}>0, \qquad \mbox{for all $l \in \cF$,}
\]
implying that $P(l,\bar{C})$ has a strictly feasible point for
all $l \in \cF$. Clarke stationarity of $\bar{C}$ now follows from
\eqnok{eq:limlam}, using the fact that $\cS'' \subset \cS$ and
applying Theorem~\ref{th:stat}.
\end{proof}

\section{Application: Chromosomal Arrangement in Human Cell
  Nuclei}\label{sec:biology}

We return to the application introduced in Section~\ref{sec:intro},
that is, finding arrangements of chromosome territories in a cell
nucleus on the basis of simple geometric principles (namely, low
overlap and discouragement of proximity for homologous pairs) and
seeing how closely the resulting arrangements match the experimental
observations that have been made to date.


During most of the cell cycle the chromosomes of higher eukaryotes are
organized into distinct compartments known as chromosome territories
(CTs). These domains have a roughly ellipsoidal shape and can overlap
each other. This overlap is believed to have an important biological
purpose, since it allows for the interaction and co-regulation of
different genes. Additionally, the CTs tend to exploit the space
available inside the cell nucleus, to allow for internal DNA-free
channels, the \emph{interchromatin compartments}. These compartments
allow CTs deep inside the cell nucleus to be accessible for regulatory
factors.

As noted earlier, the arrangement of CTs is known to be
non-random. Arrangements are known to be broadly conserved during
evolution and are similar among cell types with similar developmental
pathways. CT arrangements can also change during processes such as cancer
development or cell differentiation. See \cite{Cremer} for more
details and an overview about what is known about CT arrangements.

There is strong evidence that chromosomes have a preferred radial
position inside the nucleus. These preferences appear to be
different for nuclei of different shapes, spherical and
ellipsoidal. In ellipsoidal nuclei, CT size seems to drive the radial preferences, with the smaller CTs tending to lie nearer to the center. In spherical nuclei the situation is less clear, with the more gene-dense chromosomes seeming to lie nearer to the center of the nucleus. 

There is also evidence for neighbor preferences, which may play an
important role in causing co-regulated genes in different chromosomes
to be closer together. In particular, it has been observed recently
that CTs tend to favor neighborhoods of heterologous chromosomes. This results in the
two chromosomes in a homologous pair tending to be well
separated. (In human cells there are 22 homologous pairs, each consisting
of one chromosome from the mother and a similar one from the
father.)

We model a CT arrangement as a packing of overlapping ellipsoids of
various sizes inside an ellipsoidal container, which represents the
cell nucleus. Minimizing maximum overlap mimics the fact that the CTs
exploit the space available in the nucleus, to allow for the presence
of contiguous DNA-free interchromatin channels. These
channels extend from the nuclear pores into the interior of the
nucleus, making even the deepest CTs accessible from outside and
connecting most chromosome territories.

In this section we analyze whether purely geometric considerations,
enforcing the simple principles of minimal overlap and
well-separatedness of homologous pairs, can explain the observed
arrangements of CTs in cell nuclei of different sizes and shapes.

\subsection{Human Cell Nucleus}

The human cell nucleus has a volume of between $500$ $\mu m^3$ and
$1600$ $\mu m^3$, depending on the cell size and stage of
differentiation.  The shape also differs according to cell type. Human
fibroblasts, for example, have flat ellipsoidal nuclei, whereas
lymphocytes have spherical cell nuclei. In this study we analyze three
different nucleus sizes: small ($500$ $\mu m^3$), medium ($1000$ $\mu
m^3$) and large ($1600$ $\mu m^3$). For all three sizes we consider
two shapes: spherical nuclei and flat ellipsoidal nuclei (with axis
lengths in the ratio 1:2:4).

We estimate the volume of each CT to be proportional to the number of
base pairs in the chromosome, with the constant of proportionality
determined by the average chromatin packing density.  The number of base
pairs for human cells ranges from $47$ Mbp (chromosome 21) to $247$
Mbp (chromosome 1), while the human chromatin packing density in living
cells has been estimated to be $0.15$ $\mu m^3$/Mbp
(\cite{Mueller_chr_packing}). By multiplying the total number of base
pairs by this average density, we arrive at a total volume of about
$461$ $\mu m^3$ over all CTs. The individual volumes for each
chromosome territory are given in
Table~\ref{table_chr_volume}.

\begin{table}[!h]
 \centering
\caption{Volume of each chromosome territory based on chromatin packing density of $0.15$ $\mu m^3$/Mbp.}
\label{table_chr_volume}
\begin{tabular}{l | c c c c c c c c}
CT & 1 & 2 & 3 & 4 & 5 & 6 & 7 & 8 \\ \hline
volume & 37.05 & 36.45 & 29.85 & 28.65 & 27.15 & 25.65 & 23.85 & 21.90 \end{tabular}

\begin{tabular}{l | c c c c c c c c}
CT & 9 & 10 & 11 & 12 & 13 & 14 & 15 & 16 \\ \hline
volume &  21.00 & 20.25 & 20.10 & 19.80 & 17.10 & 15.90 & 15.00 & 13.35
\end{tabular}

\begin{tabular}{l | c c c c c c c c}
CT & 17 & 18 & 19 & 20 & 21 & 22 & X & Y\\ \hline
volume & 11.85 & 11.40 & 9.45 & 9.30 & 7.05 & 7.50 & 23.25 & 8.70
\end{tabular}\;\,\quad\quad\quad
\end{table}

\subsection{Implementation} \label{sec:implementation}

The algorithm was implemented in Matlab and CVX~\cite{GraB12}. Both,
the pairwise-overlap problems and the master problem at each iterate
of the algorithm were formulated in CVX. The algorithm is terminated
when one of the following conditions holds.
\begin{itemize}
\item[(i)] The ratio of predicted decrease to trust-region radius
  falls below a specified tolerance. Using the notation of
  Algorithm~\ref{alg:xxx}, we state this condition as
\[
\Lambda(\cF_k,C_k,M_{\cF_k}(C_k),\rho_k) / \rho_k \le \mbox{\tt Tol1},
\]
where we set $\mbox{\tt Tol1} = .005$ in our experiments.
\item[(ii)] The maximum overlap falls below a small fraction
  $\mbox{\tt Tol2}$ of the volume of the enclosing ellipsoid. We used
  $\mbox{\tt Tol2}=.0001$ in our experiments.
\item[(iii)] The algorithm runs for $100$ iterations.
\end{itemize}

Many instances of the problem, including problems of different sizes
and shapes, with different random starting points, were executed on
servers running various versions of Linux.

\subsection{Radial Preferences} \label{sec:radial}

In the first set of experiments, we use Algorithm~\ref{alg:xxx} to
arrange the CTs so as to minimize the maximal pairwise overlap, with
overlap measured as in Section~\ref{sec:overlap}. (We enforce no
constraints on homologous pairs in this first set.) We set up numerous
trials with the data varied as follows.
\begin{itemize}
\item[(i)] CT volumes are obtained by sampling from a normal
  distribution, with mean taken from Table~\ref{table_chr_volume} and
  the standard deviation set to $.02$ of the mean.
\item[(ii)] The relative axis lengths are varied around the intercepts
  found in \cite{Khalil_heterologues} for mouse chromosomes, namely,
  1:2.9:4.4. The second and third ratios are sampled from a Gaussian
  distribution with mean values $2.9$ and $4.4$, respectively, and
  standard deviations of $.1$ times the mean. (The absolute axis
  lengths are then adjusted to match the volume chosen in (i).)
\end{itemize}
We analyzed the radial preferences for two different nucleus shapes
--- spherical and flat ellipsoidal with axis ratios of 1:2:4 --- and
for the small, medium, and large nuclei with sizes described above.

For each of these six different scenarios we ran 100-200 trials, each
with data perturbed as described above and each from a different
random starting point. We applied a screening step to remove those
trials that have a final objective value greater than
\[
f^* + \max  ( 0.5, \min (0.2*f^*, 2.0)),
\]
where $f^*$ is the lowest objective value obtained over all trials
for this scenario. Table~\ref{ta:objectives:0} shows statistics on the
final objective value for each of the six scenarios. Only a few trials
were removed in the screening step, mostly for the large spherical nucleus in which the no-overlap solution was not quite attained. After
screening, the final objective values were similar for all
trials on a given scenario.

Recall we use Algorithm~\ref{alg:xxx} to solve the convex relaxation
\eqnok{prob:non_convex_ell_relax} of the original formulation
\eqnok{prob:non_convex_ell}, in which the prescribed half-axis lengths
$(r_{i1},r_{i2},r_{i3})$ are replaced by the constraints
\eqnok{constraints_ellipsoid}. We found that a number of the
ellipsoids were ``more rounded'' at the solution than our prescription
would require, but that the deformation typically affected only a
subset of the ellipsoids and was not too severe. By taking relative
difference in the $\ell_2$ norm between the vector of actual half-axis
lengths and the prescribed values, we found that on small nuclei, an
average of 8 of the 46 CTs experienced a relative change of
greater than $10\%$. For medium spherical nuclei, about 7 out of 46
changed by more than $10\%$ while the corresponding number for large
spherical nuclei is 11 out of 46. The statistics for ellipsoidal
nuclei are slightly smaller, about 5 for small and large nuclei and 3
for medium nuclei.


\begin{table}
\centering
\caption{Statistics for final objective values attained in the six scenarios, showing number of trials, means, and standard deviations, both before and after the screening step.\label{ta:objectives:0}}
\begin{tabular}{cc|ccc|ccc}
&& \multicolumn{3}{c|}{Before Screening} &
\multicolumn{3}{c}{After Screening} \\ 
shape & vol ($\mu m^3$) & trials & mean & sd & trials 
& mean & sd \\ \hline
spherical &    500 &  100 &  3.0889 & 0.0533 &  100 &  3.0889 & 0.0533 \\
ellipsoidal &    500 &  100 &  3.2769 & 0.0660 &  100 &  3.2769 & 0.0660 \\
spherical &   1000 &  200 &  1.8927 & 0.4409 &  195 &  1.8233 & 0.0657 \\
ellipsoidal &   1000 &  200 &  1.9723 & 0.0714 &  200 &  1.9723 & 0.0714 \\
spherical &   1600 &  100 &  0.6342 & 1.4325 &   89 &  0.1349 & 0.0362 \\
ellipsoidal &   1600 &  100 &  0.1338 & 0.0291 &  100 &  0.1338 & 0.0291 \\
\end{tabular}
\end{table}

We analyzed the solutions generated in the trials remaining after the
screening step to find the distances of each ellipsoid from the center
of the nucleus. Figure~\ref{boxplot_medium_ellipsoids} contains scatter plots that
show the mean volume of each CT (on the horizontal axis) plotted
against the distance between the center of that CT and the nuclear
center (on the vertical axis), for a medium-sized nucleus (volume of
$1000$ $\mu m^3$) and for both spherical and flat ellipsoidal shapes.
(The scatter plots for the large and small volume nuclei are similar,
so we do not show them here.)  A least-squares regression line is also
shown. In both graphs, a negative trend is detectable, meaning that
the larger ellipsoids tend to lie closer to the nuclear center, while
the smaller ones prefer peripheral positions. This is the opposite
trend to the one observed in nature, suggesting that the
minimum-overlap criterion alone is insufficient to explain the
experimental results.

\begin{figure}[!t]
\centering
\subfigure[Spherical Nucluei]{\includegraphics[scale=0.24]{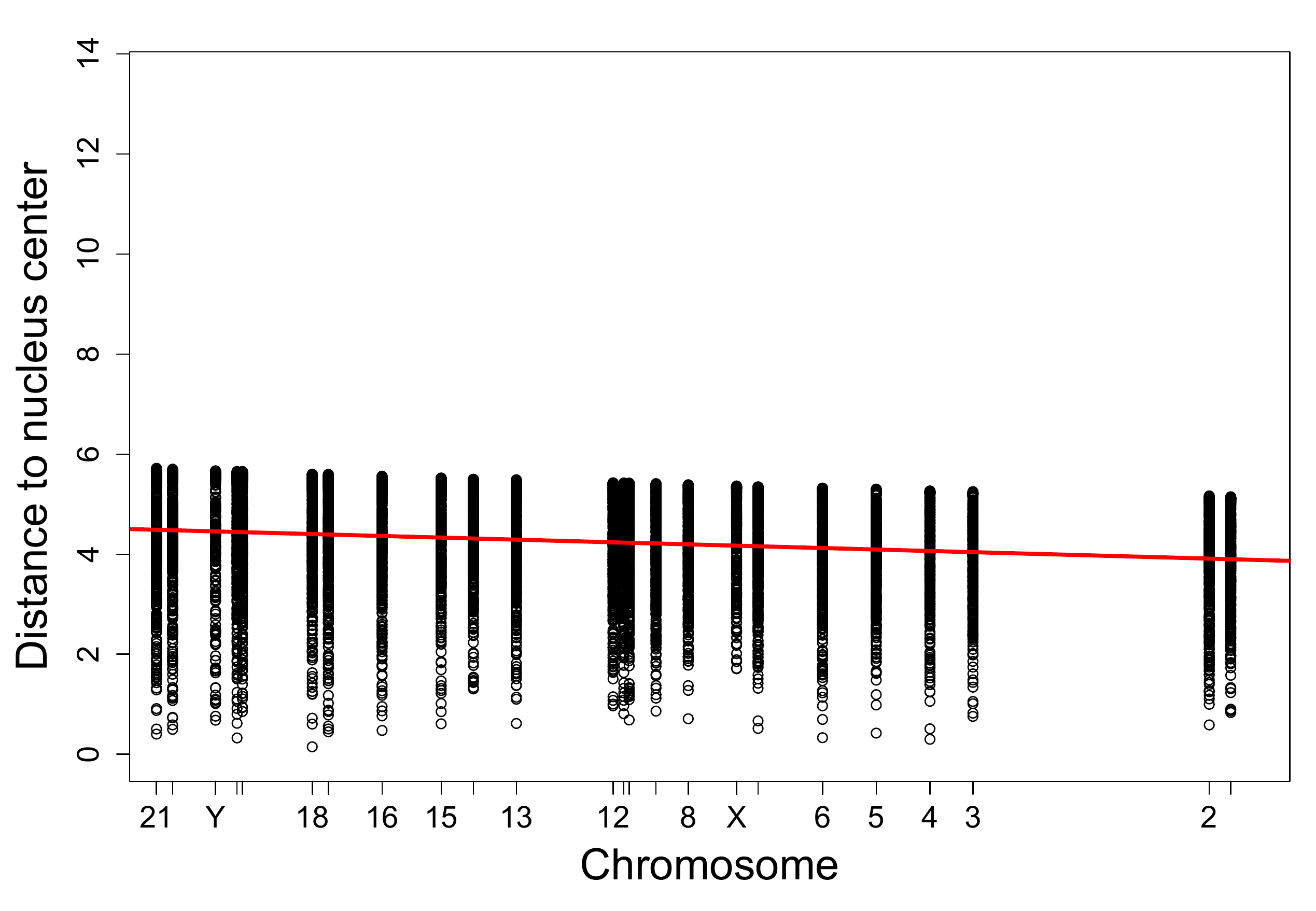}\label{boxplot_medium_ellipsoids.sphere}} \;
\subfigure[Ellipsoidal Nuclei]{\includegraphics[scale=0.24]{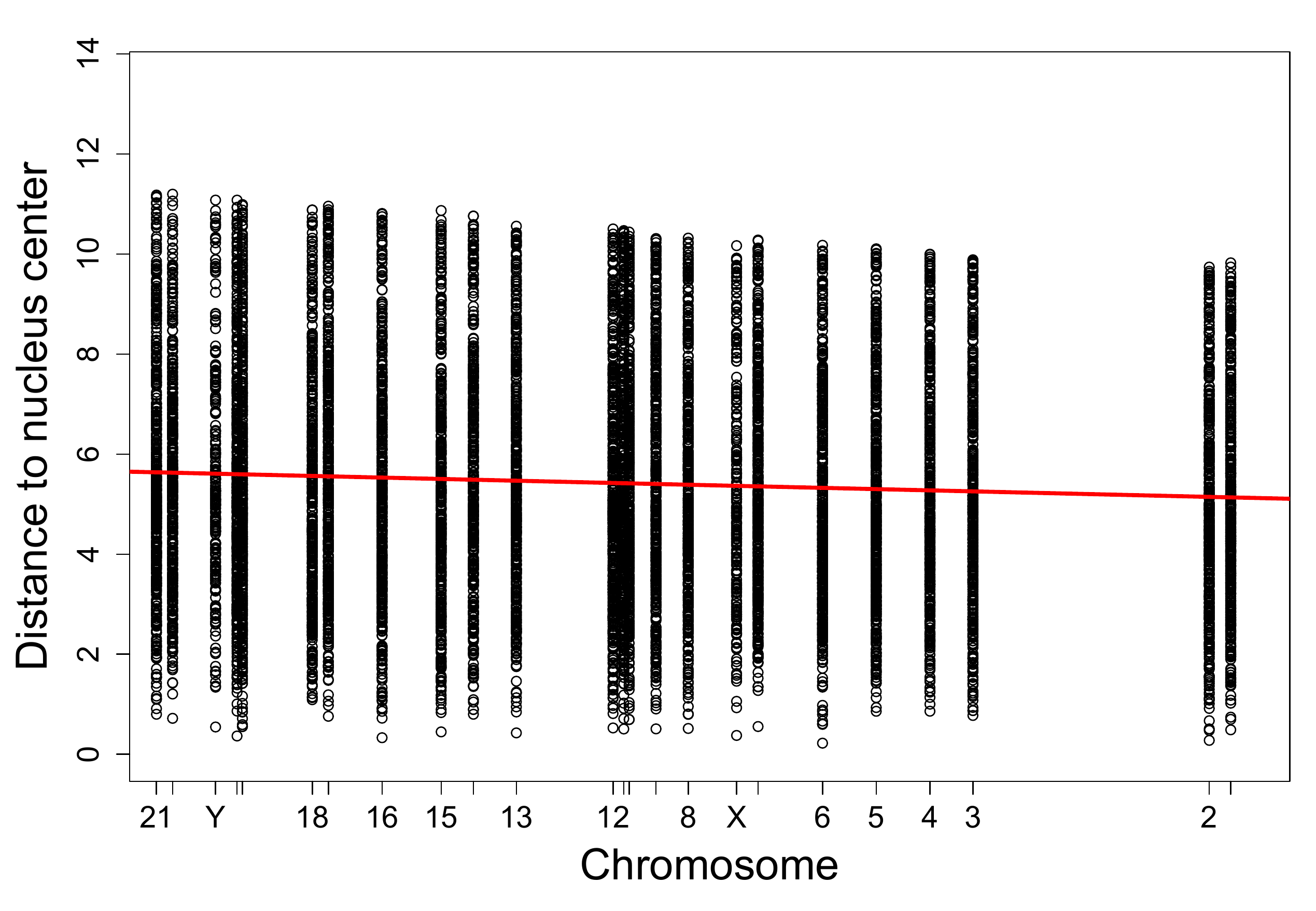}\label{boxplot_medium_ellipsoids.ellipse}} 
\caption{Scatter plots and regression lines for distances of
  ellipsoidal chromosome territories to nucleus center, for
  medium-sized nuclei. Horizontal axis is CT volume, vertical axis is
  distance to nucleus center.}
\label{boxplot_medium_ellipsoids}
\end{figure}

In Table~\ref{table_slope_0.00} we report the slopes of the regression
line for all six scenarios. Interestingly, the negative trend is
consistently weaker for flat ellipsoidal nuclei compared to spherical
nuclei. Experimentalists report a preference for larger CTs to be on
the periphery for ellipsoidal nuclei, while for spherical nuclei, the
radial preferences are believed to be correlated with gene density.

\begin{table}[!h]
\centering
\caption{Slope of regression line for all six scenarios.}
\label{table_slope_0.00}
\begin{tabular}{l | c c c}
 & small & medium & large \\ \hline
 spherical & -0.0050562 & -0.0029405 & -0.0020672\\
 ellipsoidal & -0.0047345 & -0.0025045 & -0.0018849 \end{tabular}
 \end{table}

\subsection{Radial preferences assuming heterologous CT groupings}
\label{sec:homologs}

Khalil et al. \cite{Khalil_heterologues} showed that CTs tend to
assemble in heterologous neighborhoods, causing the distances between
homologous chromosome pairs to be larger in general than heterologous
inter-CT distances. They discuss a number of possible explanations for
this phenomenon, such as that heterologous neighborhoods act as a
buffer zone in preventing inter-homologue recombination and protect
against the loss of heterozygosity. The authors also analyze whether
the radial preferences discussed in the previous subsection could
explain the preference for arrangements with larger homologous
inter-CT distances. Using simulations, they give a negative answer to
this question.

In the following analysis, we invert the question, asking instead
whether the preference for heterologous neighborhoods can explain the
observed radial preferences. To investigate this question, we add
penalties to our model to discourage the CTs in a homologous pair from
being too close to each other. We solve the resulting formulation
using a combination of Algorithm~\ref{alg:circ} for sphere packing
with Algorithm~\ref{alg:xxx} for ellipsoid packing.

We denote the set of index pairs $(i,j)$ corresponding to homologous
chromosome pairs by $H$ and we introduce a new variable $\eta$ to
capture the proximity of CTs in a homologous pair.  Specifically, we
define for each ellipsoid $i$ an enclosing sphere that is concentric
with the ellipsoid $i$, with radius $\lambda$ times the maximum
semi-axis length $r_i$ of the CT, where $\lambda \ge 1$ is a
user-defined parameter. We define $\eta$ to be the maximal overlap of
these enclosing spheres, over all homologous pairs, by adding
constraints whose form is similar to \eqnok{prob:min_overlap.2}. We
then add a penalty term $c \eta$ to the objective (where $c\geq 0$ is some
penalty parameter), to obtain the following extension of formulation
\eqnok{prob:non_convex_ell}.
\begin{subequations}
\label{prob:non_convex_ell_heterologous}
\begin{align}
\min_{\xi, \eta, (c_i,S_i,\Sigma_i), i=1,2,\dotsc,N} \;&
\xi + c \eta&\\
\mbox{subject to} \quad&\xi \geq \hat{O}(c_i, c_j, \Sigma_i, \Sigma_j),& 1\leq i<j\leq N, \label{prob:non_conv_ell1}\\ 
\label{prob:non_conv_ell5} & \lambda (r_i - r_j) - \norm{c_i-c_j}_2 \leq \eta, & (i,j)\in H, \\ 
\label{prob:non_conv_ell2} &\mathcal{E}_i\subset \mathcal{E}, & i=1,2,\dots,N, \\ 
\label{prob:non_conv_ell4} &\Sigma_i=S_i^2, \\
\label{prob:non_conv_ell3} &\mbox{semi-axes of }\mathcal{E}_i \mbox{ are } r_{i1}, r_{i2}, r_{i3},  & i=1,2,\dots,N.
\end{align}
\end{subequations}
We can relax this to obtain an extended formulation of
\eqnok{prob:non_convex_ell_relax}. To solve, we extend
Algorithm~\ref{alg:xxx} by adding linearizations of the constraints
\eqnok{prob:non_conv_ell1} to each subproblem, in the manner of
\eqnok{prob:min_overlap_lin.2}.


For our simulations, we choose $c = 100$ and $\lambda = 1.25$. As in
Subsection~\ref{sec:radial}, we generated about 100-200 trials by
perturbing CT volumes and dimensions randomly around given mean values
and using different random starting points. The screening procedure
described in the previous subsection was applied to remove those
trials with less competitive final objective values. Statistics for
the final objectives are shown in Table~\ref{ta:objectives:1.25}. The
large objective values in the first line of the table indicates that
for small spherical nuclei, it was not possible to find solutions in
which the homolog separation was enforced adequately. (The only trial
that survived screening was one that violated these conditions
significantly less than most others.) Among the other scenarios, only
the medium spherical nucleus saw significant numbers of trials removed
by screening. Here, most of the trials attained final objectives quite
close to 1.90, while the others had significantly higher values. In
the other four scenarios --- small ellipsoidal, medium ellipsoidal,
large spherical, and large ellipsoidal --- proximity penalties for
homologous pairs were not incurred, and final objective values were
tightly clustered.

\begin{table}[!b]
\centering
\caption{Statistics for final objective values attained in the six scenarios in which homolog proximity is penalized, showing number of trials, means, and standard deviations, both before and after the screening step.\label{ta:objectives:1.25}}
\begin{tabular}{cc|ccc|ccc}
&& \multicolumn{3}{c|}{Before Screening} &
\multicolumn{3}{c}{After Screening} \\ 
shape & vol ($\mu m^3$) & trials & mean & sd & trials 
& mean & sd \\ \hline
spherical &    500 &  100 & 294.0617 & 46.8185 &    1 & 184.0292 &  0.0000 \\
ellipsoidal &    500 &  100 &   3.6556 &  0.2691 &   95 &   3.5987 &  0.0879 \\
spherical &   1000 &  200 &  15.0885 & 20.7260 &  114 &   1.8993 &  0.0833 \\
ellipsoidal &   1000 &  200 &   2.0088 &  0.5118 &  192 &   1.9060 &  0.0691 \\
spherical &   1600 &  100 &   0.4424 &  1.2293 &   94 &   0.1369 &  0.0213 \\
ellipsoidal &   1600 &  100 &   0.2752 &  0.8097 &   97 &   0.1343 &  0.0251 
\end{tabular}
\end{table}

The convex relaxation of our problem that encourages separation of
homologous CT pairs does less well in preserving the dimensions of the
ellipsoids than the formulation considered in
Section~\ref{sec:radial}.  For spherical nuclei 26 out of the 46 CTs
for small nuclei experienced a relative change in the half-axes
lengths of more than 10\%. For the medium spherical nuclei it was in
average 11 out of 46 and for the large spherical nuclei 17 out of
46. The statistics for the ellipsoidal nuclei were somewhat smaller:
12 for the small nuclei, 4 for the medium nuclei, and 9 for the large
nuclei.  On the small nuclei, the distortions can be explained by the
tightness of space, while on large nuclei, the fact that all CTs can
be fit without any overlap reduces the need for them to adopt their
lowest-volume dimensions (which would achieve the prescribed semi-axis
lengths).

Figure~\ref{boxplot_medium_ellipsoids_heterologous} contains scatter
plots showing the mean volume of each CT (on the horizontal axis)
plotted against the distance between the center of that CT and the
nuclear center (on the vertical axis), for a medium-sized nucleus
(volume of $1000$ $\mu m^3$) and for both spherical and flat
ellipsoidal shapes.  (As in Subsection~\ref{sec:radial}, scatter plots
for the large and small volume nuclei are similar, so we do not show
them here.) Here the regression line shows a significant positive
trend, meaning that the smaller ellipsoids tend to lie in the interior
of the nucleus, while the larger ones prefer peripheral positions.
Hence, by adding penalties on nearness of homologous pairs to the
formulation, we are able to match the radial preferences observed in
nature.

Another interesting observation, more evident in
Figure~\ref{boxplot_medium_ellipsoids_heterologous}(a), is that the X
and Y chromosomes both lie closer to the nucleus center than their size
would suggest.  This makes sense, as these are the only two
chromosomes not subject to the homologous-pair separation penalties.


\begin{figure}[!t]
\centering
\subfigure[Spherical Nuclei]{\includegraphics[scale=0.24]{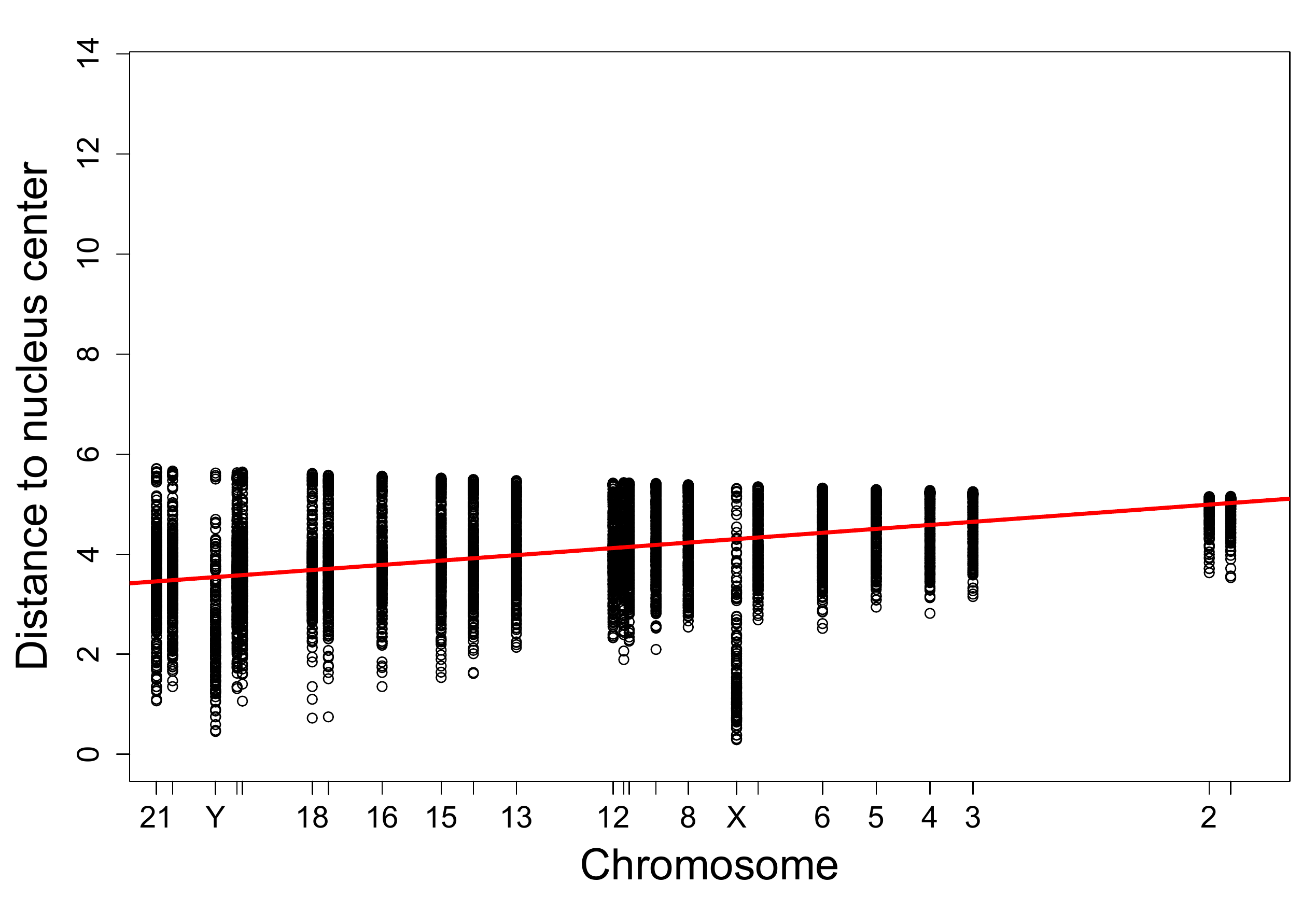}\label{boxplot_medium_ellipsoids_heterologous.sphere}} \;
\subfigure[Ellipsoidal Nuclei]{\includegraphics[scale=0.24]{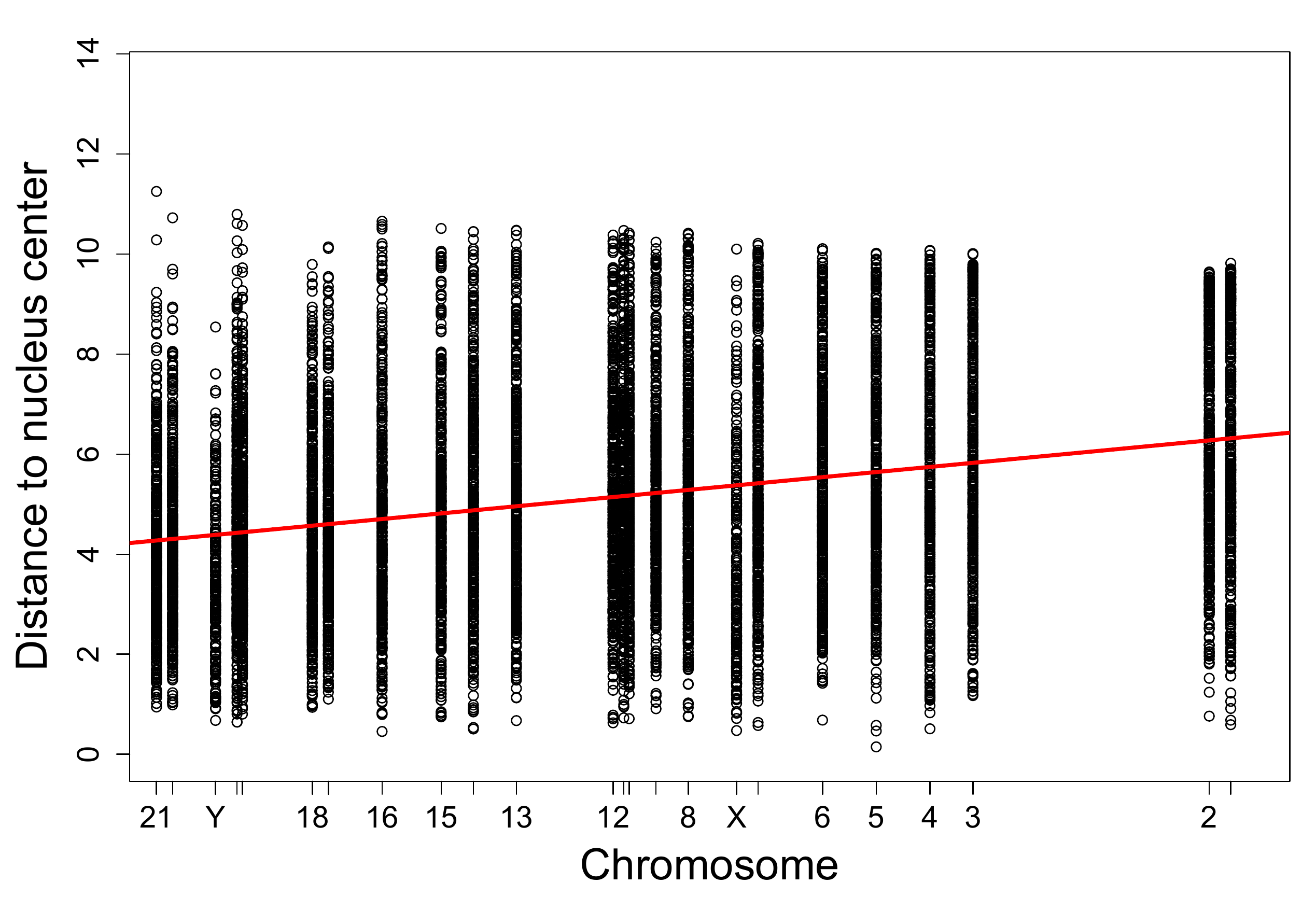}\label{boxplot_medium_ellipsoids_heterologous.ellipsoid}} 
\caption{Scatter plots and regression lines for ellipsoidal chromosome
  territory distances to nucleus center, where penalties to enforce
  heterologous groupings are present in the objective.  Horizontal
  axis is CT volume, vertical axis is distance to nucleus center.}
\label{boxplot_medium_ellipsoids_heterologous}
\end{figure}

In Table~\ref{table_slope_1.25} we report the slopes of the regression
line for all six size / shape scenarios considered in this
section. These results highlight a significant difference between
spherical and flat-ellipsoidal nuclei. The radial preference is
consistently weaker for spherical nuclei than in flat-ellipsoidal
nuclei.

\begin{table}[!h]
\centering
\caption{Slope of regression line for all six scenarios assuming heterologous CT groupings.}
\label{table_slope_1.25}
\begin{tabular}{l | c c c}
 & small & medium & large \\ \hline
 spherical & 0.0031803 & 0.0078299 & 0.0072028 \\
 ellipsoidal & 0.0081768 & 0.0102088 & 0.0145001 \end{tabular}
 \end{table}

\section{Discussion} \label{sec:discussion}

We have described a bilevel optimization procedure for finding local
solutions of the problem of packing spheres and ellipsoids in finite
volumes, and used these procedures to model and analyze chromosome
arrangement in cell nuclei. Semidefinite programming duality is used
to obtain the sensitivity information needed to construct the
approximation to the upper-level problem that is solved at each
iteration of the trust-region procedure.  Our convergence analysis
takes place in a general setting in which the lower-level problems are
semidefinite programs parametrized by their objective coefficient
matrix; it is not confined to the specific form of the semidefinite
programs arising from the S-procedure for overlapping ellipsoids. Thus
it may be adaptable to other design problems involving parametrized
systems that can be modeled by semidefinite programs.

In the CT packing application discussed in Section~\ref{sec:biology},
we initially found that the arrangements observed experimentally could
not be explained by the simple geometric principle of minimizing the
maximum overlap. However, when we enhanced the model to capture the
recently observed phenomenon of heterologous neighborhoods /
homologous pair separation, the radial preferences observed in nature
(in which larger CTs tended to lie further from the nuclear center)
were recovered in our simulations.  The homologous-pair-separation
aspects of our model are governed by two positive parameters $c$ and
$\lambda$; we reported results in Subsection~\ref{sec:homologs} only
for the values $c=100$ and $\lambda=1.25$. From an examination of
Tables~\ref{table_slope_0.00} and \ref{table_slope_1.25}, we speculate
that it would be possible to choose these parameters in such a way
that the slope of the regression line for spherical nuclei would be
approximately zero, while the corresponding slope for ellipsoidal
nuclei would be positive. Such a result would be consistent with
experimental observations that identify no clear radial preference for
spherical nuclei, but a pronounced radial preference for ellipsoidal
nuclei.


We obtained results on a limited but representative range of nuclei
dimensions. In future work, we will explore CT configurations for a
wider range of ellipsoidal shapes and sizes, corresponding to known
dimensions of nuclei in different cell types. We will also enhance the
model as further biological results are obtained, aiming to find
biologically plausible, elementary principles that explain
experimental observations (in the spirit of Occam's Razor).

\appendix

\section{Technical Results for Parametrized Semidefinite Programs}

In this section we consider the following primal-dual pair of
semidefinite programs that are parametrized by the primal objective
term $C$:
\beq \label{eq:sdpp}
\min_X \, \langle C,X \rangle \;\; \mbox{s.t.} \;\; \langle A_i, X \rangle = b_i, \;\; i=1,2,\dotsc,p, \;\; X \succeq 0, \\
\eeq
\beq \label{eq:sdpd}
\max_{\zeta,S} \, b^T\zeta \;\; \mbox{s.t.} \;\; \sum_{i=1}^p \zeta_i A_i + S = C, \;\; S \succeq 0.
\eeq
We denote solutions of these problems by $X(C)$ and $(\zeta(C),S(C))$,
respectively. (Our interest is in the application to the SDP pair
\eqnok{eq:50}, \eqnok{eq:50a}, but we have simplified the notation
here.)

We show first that the solutions to \eqnok{eq:sdpp} are uniformly bounded in a
neighborhood of a $C$ for which a strictly feasible point for the dual
\eqnok{eq:sdpd} exists. The result is an almost immediate consequence
of \cite[Theorem~4.1]{Tod01a}.
\begin{lemma} \label{lem:bd} Consider the primal-dual pair
  \eqnok{eq:sdpp}, \eqnok{eq:sdpd} of semidefinite programs: Suppose
  that \eqnok{eq:sdpp} is feasible (with feasible point $\hat{X}$),
  and that at some matrix $C_0 \in S \R^{n \times n}$, there exists a
  strictly feasible point $(\hat{\zeta},\hat{S})$ for \eqnok{eq:sdpd},
  where the eigenvalues of $\hat{S}$ are bounded below by
  $\sigma>0$. Then there exists a constant $\delta>0$ such that for
  all matrices $C \in S \R^{n \times n}$ with $\|C-C_0\|_F \le
  \delta$, \eqnok{eq:sdpp} has a nonempty solution set, and all
  solutions $X(C)$ are bounded as follows:
\[
\| X (C) \|_F \le \frac{2}{\sigma} (\langle \hat{X}, \hat{S} \rangle + 
\delta \|\hat{X} \|_F).
\]
Moreover the optimal values of the problems \eqnok{eq:sdpp} and
\eqnok{eq:sdpd} are equal.
\end{lemma}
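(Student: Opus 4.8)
The plan is to exploit the strictly feasible dual point at $C_0$ in two separate ways: first to transport strict dual feasibility to every nearby $C$, which immediately yields existence of primal solutions, boundedness of the solution set, and equality of the optimal values; and second to run a short explicit inequality chain that produces the stated constant $\frac{2}{\sigma}(\langle \hat X,\hat S\rangle + \delta\|\hat X\|_F)$. In fact the explicit bound is self-contained and does not even need the cited theorem, while the structural assertions follow cleanly from it.

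First I would observe that if $\|C-C_0\|_F \le \delta$, then $(\hat\zeta,\, \hat S + (C-C_0))$ is feasible for the dual \eqnok{eq:sdpd} at $C$, since $\sum_i \hat\zeta_i A_i + (\hat S + (C-C_0)) = C_0 + (C-C_0) = C$, and its slack matrix satisfies $\hat S + (C-C_0) \succeq (\sigma - \delta) I$, which is positive definite as soon as $\delta < \sigma$. Thus at every such $C$ the dual has a strictly feasible point while \eqnok{eq:sdpp} is feasible by hypothesis. Invoking \cite[Theorem~4.1]{Tod01a} (strong duality under a dual Slater condition) then gives, for each such $C$, that the solution set of \eqnok{eq:sdpp} is nonempty and bounded and that the optimal values of \eqnok{eq:sdpp} and \eqnok{eq:sdpd} coincide. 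This settles the existence and strong-duality assertions uniformly over the neighborhood.

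For the explicit bound, let $X(C)$ be any primal optimizer. I would use two elementary facts: for $X \succeq 0$ one has $\trace X \ge \|X\|_F$, and $\langle \hat S, X\rangle = \langle \hat S - \sigma I, X\rangle + \sigma \trace X \ge \sigma \trace X$ because $\hat S \succeq \sigma I$; moreover dual feasibility at $C_0$ gives the identity $\langle \hat S, X\rangle = \langle C_0, X\rangle - b^T\hat\zeta$ for every $X$ feasible for \eqnok{eq:sdpp}. Chaining $\langle C_0, X(C)\rangle \le \langle C, X(C)\rangle + \delta\|X(C)\|_F$ (by Cauchy--Schwarz), then $\langle C, X(C)\rangle \le \langle C, \hat X\rangle$ (by optimality of $X(C)$ against the feasible $\hat X$), and finally $\langle C, \hat X\rangle \le \langle C_0, \hat X\rangle + \delta\|\hat X\|_F = \langle \hat S, \hat X\rangle + b^T\hat\zeta + \delta\|\hat X\|_F$, I arrive at $(\sigma-\delta)\|X(C)\|_F \le \langle \hat X, \hat S\rangle + \delta\|\hat X\|_F$. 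Choosing $\delta = \sigma/2$ makes $\sigma - \delta = \sigma/2$ and yields exactly the claimed inequality.

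Since the computation is short, the main obstacle is bookkeeping rather than difficulty: one must ensure that every perturbation term $\langle C - C_0,\cdot\rangle$ is bounded in the correct direction by $\delta\|\cdot\|_F$ and that the two $b^T\hat\zeta$ contributions cancel, so that the constant emerges exactly as stated rather than with a stray factor. The one ingredient that is easy to overlook is the inequality $\trace X \ge \|X\|_F$ for $X \succeq 0$, which converts the trace estimate into a bound on the Frobenius norm; everything else is the dual-shift observation together with the cited duality theorem.
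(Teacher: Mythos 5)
Your proof is correct and follows essentially the same route as the paper's: both rest on the dual-feasibility identity $\langle C,X\rangle = b^T\hat{\zeta} + \langle \hat{S} + C - C_0, X\rangle$ over the primal feasible set, the eigenvalue lower bound $\sigma-\delta \ge \sigma/2$ on the shifted slack matrix, and the comparison against the fixed feasible point $\hat{X}$, with Todd's Theorem~4.1 invoked for the zero-gap statement. The only difference is organizational: the paper outsources the step $\langle \hat{S}+C-C_0, X\rangle \ge \tfrac{\sigma}{2}\|X\|_F$ to Todd's ``Fact 14'' and then gets existence from compactness of the resulting sublevel set, whereas you prove that inequality by hand (via $\trace X \ge \|X\|_F$ for $X \succeq 0$) and get existence by transporting the strictly feasible dual point to nearby $C$ --- the same ingredients in a slightly different order.
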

\begin{proof}
  We have for any $X$ feasible for the primal (note that the primal
  feasible region does not depend on $C$) that
\begin{align*}
\langle C,X \rangle &= \langle C_0,X \rangle +  \langle C-C_0,X \rangle \\
&= \langle \sum_{i=1}^p \hat{\zeta}_i A_i + \hat{S} ,X \rangle 
+  \langle C-C_0,X \rangle \\
&= b^T \hat{\zeta} + \langle \hat{S} ,X \rangle 
+  \langle C-C_0,X \rangle.
\end{align*}
Since $b^T \hat{\zeta}$ is independent of $X$, we can obtain an
equivalent to the primal problem by replacing its objective $\langle
C,X \rangle$ by $\langle \hat{S} + C-C_0,X \rangle$.  Using the
assumed feasible point $\hat{X}$ of \eqnok{eq:sdpp} (note that there
is no dependence of $\hat{X}$ on $C$), we can formulate
\eqnok{eq:sdpp} equivalently as follows:
\begin{subequations} \label{eq:sdpp.1}
\begin{align}
\min_X \, \langle \hat{S} + C-C_0 ,X \rangle \;\; \mbox{s.t.} \;\;& \langle A_i, X \rangle = b_i, \;\; i=1,2,\dotsc,p, \;\; X \succeq 0, \\
&\langle \hat{S} + C-C_0 ,X \rangle \le 
\langle \hat{S} + C-C_0 , \hat{X} \rangle.
\end{align}
\end{subequations}
By choosing $\delta \in (0, \sigma/2]$, we have that all eigenvalues
of $\hat{S} + C-C_0$ are bounded below by $\sigma/2$. Hence from
``Fact 14'' of \cite{Tod01a}, we have that
\[
\langle \hat{S} + C-C_0 ,X \rangle  \le 
\langle \hat{S} + C-C_0 , \hat{X} \rangle \; \Rightarrow \;
\|X\|_F \le \frac{2}{\sigma} \langle \hat{S} + C-C_0 , \hat{X} \rangle \le
\frac{2}{\sigma}  (
\langle \hat{S} , \hat{X} \rangle + \delta \| \hat{X} \|_F).
\]
Hence, \eqnok{eq:sdpp.1} involves the minimization of a continuous
function over a nonempty compact set, so the solution set exists, and
moreover, all solutions are bounded as claimed.

The last claim can be derived exactly as in
\cite[Theorem~4.1]{Tod01a}.
\end{proof}

The next result examines the solution of a sequence of parametrized
SDPs.
\begin{theorem} \label{th:limitsol} Given $A_i$, $i=1=1,2,\dotsc,p$
  and $b \in \R^p$ as in Lemma~\ref{lem:bd}, such that \eqnok{eq:sdpp}
  is feasible, let $\bar{C} \in S\R^{n \times n}$ be such that there
  exists a strictly feasible point for \eqnok{eq:sdpd} when
  $C=\bar{C}$. Consider a sequence $\{ C_k \}$ with $C_k \in S \R^{n
    \times n}$ and $C_k \to \bar{C}$. Then the following is true:
\begin{itemize}
\item[(i)] There is a constant $\beta >0$ and index $K$ such that
  \eqnok{eq:sdpp} with $C=C_k$ has nonempty solution set for all $k
  \ge K$, and $\|X(C_k) \|_F \le \beta$ for all such solutions.
\item[(ii)] If $\{X(C_k)\}$ is a sequence of solutions of
  \eqnok{eq:sdpp} with $C=C_k$, then this sequence has at least one
  accumulation point, and all such accumulation points are solutions
  of \eqnok{eq:sdpp} with $C=\bar{C}$.
\end{itemize}
\end{theorem}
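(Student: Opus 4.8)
The plan is to obtain part~(i) as an immediate consequence of Lemma~\ref{lem:bd}, and then to prove part~(ii) by a standard compactness-plus-limiting argument that exploits the fact that the feasible set of the primal \eqnok{eq:sdpp} is independent of the parameter $C$.

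For part~(i), I would apply Lemma~\ref{lem:bd} with $C_0 = \bar{C}$. By hypothesis \eqnok{eq:sdpp} is feasible (with some point $\hat{X}$) and \eqnok{eq:sdpd} has a strictly feasible point $(\hat{\zeta},\hat{S})$ at $C = \bar{C}$; let $\sigma>0$ be a lower bound on the eigenvalues of $\hat{S}$. The lemma then furnishes a radius $\delta>0$ together with the uniform bound
\[
\beta := \frac{2}{\sigma}\left(\langle \hat{X},\hat{S}\rangle + \delta\|\hat{X}\|_F\right),
\]
valid for every solution of \eqnok{eq:sdpp} whenever $\|C-\bar{C}\|_F \le \delta$. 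Since $C_k \to \bar{C}$, there is an index $K$ with $\|C_k-\bar{C}\|_F \le \delta$ for all $k \ge K$; for such $k$ the solution set of \eqnok{eq:sdpp} is nonempty and every solution satisfies $\|X(C_k)\|_F \le \beta$, which is exactly the claim.

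For part~(ii), existence of an accumulation point follows from part~(i): the tail $\{X(C_k)\}_{k \ge K}$ lies in the closed ball of radius $\beta$ in $S\R^{n\times n}$, a compact set, so some subsequence $X(C_{k_j})$ converges to a limit $\hat{X}$. To see that $\hat{X}$ solves \eqnok{eq:sdpp} at $C = \bar{C}$, I would verify feasibility and optimality separately. Feasibility is immediate because the constraint set $\{X : \langle A_i, X\rangle = b_i,\ i=1,\dotsc,p,\ X \succeq 0\}$ does not depend on $C$ and is closed, so the limit of the feasible iterates $X(C_{k_j})$ remains feasible. For optimality, I would fix an arbitrary feasible $\tilde{X}$; since $\tilde{X}$ is feasible for the problem with parameter $C_{k_j}$ as well, optimality of $X(C_{k_j})$ gives $\langle C_{k_j},\tilde{X}\rangle \ge \langle C_{k_j}, X(C_{k_j})\rangle$. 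Passing to the limit along $j$, and using $C_{k_j} \to \bar{C}$ together with $X(C_{k_j}) \to \hat{X}$ and continuity of the inner product, yields $\langle \bar{C},\tilde{X}\rangle \ge \langle \bar{C}, \hat{X}\rangle$. As $\tilde{X}$ was an arbitrary feasible point, $\hat{X}$ minimizes $\langle \bar{C}, \cdot\rangle$ over the feasible set and hence is a solution of \eqnok{eq:sdpp} with $C = \bar{C}$.

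The argument is essentially routine; the only point demanding care is the observation that makes the limiting inequality work, namely that the primal feasible region is common to all parameter values, so that a single feasible $\tilde{X}$ can be tested against every perturbed problem simultaneously. Note in particular that this route to optimality requires neither continuity of the optimal-value function nor any appeal to strong duality beyond what Lemma~\ref{lem:bd} already supplies in establishing the uniform bound of part~(i).
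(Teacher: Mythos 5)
Your proposal is correct and follows essentially the same route as the paper: part (i) via Lemma~\ref{lem:bd} applied at $C_0=\bar{C}$, and part (ii) by compactness of the solution tail, closedness of the $C$-independent feasible set, and passing the optimality comparison $\langle C_{k},\tilde{X}\rangle \ge \langle C_{k},X(C_{k})\rangle$ to the limit. The paper phrases the optimality step as a contradiction (supposing a strictly better feasible $\tilde{X}$ exists) while you argue directly, but the underlying inequality and the key observation --- that a single feasible $\tilde{X}$ is feasible for every perturbed problem --- are identical.
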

\begin{proof}
  The first claim (i) is an immediate consequence of
  Lemma~\ref{lem:bd}. For (ii), note that boundedness of $X(C_k)$
  ensures existence of accumulation points. Suppose that $\bar{X}$ is
  such a point and assume WLOG that $X(C_k) \to \bar{X}$. Note first
  that $\bar{X}$ is feasible for \eqnok{eq:sdpp} regardless of $C$. If
  $\bar{X}$ were not optimal for \eqnok{eq:sdpp} with $C=\bar{C}$,
  then there would exist another feasible matrix $\tilde{X}$ with
  $\langle \bar{C},\tilde{X} \rangle < \langle \bar{C},\bar{X}
  \rangle$. But since 
\[
\lim_k \, \langle C_k,\tilde{X} \rangle = \langle \bar{C},\tilde{X}
\rangle < \langle \bar{C},\bar{X} \rangle = \lim_k \, \langle C_k, X(C_k) \rangle,
\]
we have that $\langle C_k,\tilde{X} \rangle < \langle C_k,X(C_k) \rangle$
for all $k$ sufficiently large, contradicting optimality of
$X(C_k)$. Hence (ii) is true.
\end{proof}

We next prove some elementary and useful facts about the value
function of \eqnok{eq:sdpp}, which we denote by $t(C)$.
\begin{lemma} \label{lem:subdiff} Suppose that \eqnok{eq:sdpp} is
  feasible, and let $\bar{C} \in S\R^{n \times n}$ be such that there
  exists a strictly feasible point for \eqnok{eq:sdpd} when
  $C=\bar{C}$.  Then there exists a neighborhood $\cN$ of $\bar{C}$
  within which the following claims are true.
\begin{itemize}
\item[(i)] $t(\cdot)$ is a concave function.
\item[(ii)] For all $C \in \cN$ and all $\DC \in S \R^{n \times n}$,
  we have
\begin{equation} \label{eq:tcdc}
t(C + \DC) \le t(C) + \langle X(C), \DC \rangle,
\end{equation}
where $X(C)$ is any solution of \eqnok{eq:sdpp}.
\item[(iii)] Any $X(C)$ that solves \eqnok{eq:sdpp} belongs to the
  Clarke subdifferential of $t(\cdot)$ at $C$.
\item[(iv)] $t(\cdot)$ is Lipschitz continuous in $\cN$.
\end{itemize}
\end{lemma}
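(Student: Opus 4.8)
The plan is to exploit the fact that the feasible set of \eqnok{eq:sdpp}, namely $\mathcal{P} := \{ X \succeq 0 : \langle A_i,X\rangle = b_i,\ i=1,\dotsc,p\}$, does not depend on the parameter $C$, so that
\[
t(C) = \min_{X \in \mathcal{P}} \, \langle C,X \rangle
\]
is a pointwise minimum, over a \emph{fixed} set, of functions that are linear in $C$. I would take $\cN$ to be a neighborhood of $\bar C$ on which Lemma~\ref{lem:bd} applies: there the solution set of \eqnok{eq:sdpp} is nonempty and every solution $X(C)$ satisfies $\|X(C)\|_F \le \beta$ for a fixed constant $\beta$, so $t(\cdot)$ is finite on $\cN$. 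All four claims then reduce to elementary concave-analysis arguments together with this uniform bound.

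I would dispatch (i) and (ii) first. For (i), an infimum of affine functions of $C$ is concave wherever it is finite; since each map $C \mapsto \langle C,X\rangle$ is linear and $t$ is finite on $\cN$, $t$ is concave there. For (ii), observe that a solution $X(C)$ is feasible for \emph{every} objective, in particular for $C+\DC$, so that (using $\langle C,X(C)\rangle = t(C)$)
\[
t(C+\DC) \le \langle C+\DC,X(C)\rangle = \langle C,X(C)\rangle + \langle X(C),\DC\rangle = t(C) + \langle X(C),\DC\rangle.
\]
This holds for every $\DC$ (trivially if $t(C+\DC) = -\infty$) and requires only that $C \in \cN$ so that $X(C)$ exists; it is exactly the supergradient inequality for the concave $t$.

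For (iv) I would combine (ii) with the uniform bound. Taking $C \in \cN$ and $\DC$ small enough that $C+\DC \in \cN$ as well, inequality (ii) gives $t(C+\DC) - t(C) \le \langle X(C),\DC\rangle \le \beta\,\|\DC\|_F$, while applying (ii) at the base point $C+\DC$ with increment $-\DC$ gives $t(C) - t(C+\DC) \le \beta\,\|\DC\|_F$. Hence $|t(C+\DC)-t(C)| \le \beta\,\|\DC\|_F$, so $t$ is Lipschitz on $\cN$ with constant $\beta$; the only real content here is the boundedness supplied by Lemma~\ref{lem:bd}.

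Finally (iii). Having shown $t$ concave and (by (iv)) Lipschitz on $\cN$, the function $-t$ is convex and Lipschitz, so its Clarke subdifferential coincides with the convex-analytic one (the coincidence noted after \eqnok{def:clarke.sub}, cf.\ \cite[Proposition~2.2.7]{Cla83}); combined with the scalar rule $\partial t(C) = -\partial(-t)(C)$ for the Clarke subdifferential, this identifies $\partial t(C)$ with the set of supergradients $\{ X : t(C+\DC) \le t(C) + \langle X,\DC\rangle \ \text{for all } \DC\}$. Inequality (ii) shows $X(C)$ lies in precisely this set, so $X(C) \in \partial t(C)$ in the Clarke sense. I expect the main obstacle to be purely bookkeeping --- keeping the sign conventions straight when passing between $t$ and $-t$ and invoking the Clarke/convex coincidence for the concave case --- rather than any genuine analytic difficulty, since the substantive ingredient (uniform boundedness of solutions near $\bar C$) is already in hand from Lemma~\ref{lem:bd}.
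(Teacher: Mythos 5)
Your proposal is correct and follows essentially the same route as the paper's proof: the same neighborhood supplied by Lemma~\ref{lem:bd}, the same pointwise-minimum-of-linear-functions argument for (i), the same feasibility argument for the supergradient inequality (ii), the same two-sided application of (ii) with the uniform bound $\beta$ for (iv), and the same passage to the convex function $-t$ with the scalar rule $\partial(-t) = -\partial t$ for (iii). The only (harmless, arguably cleaner) difference is that you prove (iv) before (iii), making explicit the Lipschitz continuity needed to invoke the coincidence of the Clarke and convex subdifferentials, which the paper uses implicitly when it establishes (iii) first.
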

\begin{proof}
The proof of (i) is elementary.

For (ii), note first from Lemma~\ref{lem:bd} that we can choose $\cN$
so as to ensure that a solution $X(C)$ to \eqnok{eq:sdpp} exists for
all $C \in \cN$. We have (denoting by $\Psi$ the feasible set for
\eqnok{eq:sdpp}) that
\[
t(C +\DC) = \min_{X \in \Psi} \, \langle C + \DC, X \rangle 
 \le \langle C + \DC, X(C) \rangle 
= t(C) + \langle \DC, X(C) \rangle,
\]
as required.

For (iii), note that (by taking the negative of the inequality
\eqnok{eq:tcdc}) $-X(C)$ is a subgradient of the convex function
$(-t)(C)$, and so $-X(C)$ belongs to the Clarke subdifferential of
$(-t)(C)$. It follows from \cite[p.~128, Exercise 8(c)]{BorL00}
(with $\lambda=-1$) that $X(C)$ belongs to the Clarke subdifferential
of $t(C)$, as claimed.

For (iv), note from Lemma~\ref{lem:bd} that we can choose $\cN$ such that
$\| X(C) \|$ is uniformly bounded for all $C \in \cN$ (by $\beta>0$,
say). Denoting by $C_1$ and $C_2$ any two points in $\cN$, we have
from \eqnok{eq:tcdc} that
\begin{align*}
t(C_2) & \le t(C_1) + \langle X(C_1), C_2-C_1 \rangle, \\
t(C_1) & \le t(C_2) + \langle X(C_2), C_1-C_2 \rangle.
\end{align*}
Thus
\[
| t(C_1) - t(C_2) | \le \max \left( \| X(C_1) \|, \|X(C_2) \|
\right) \|C_1-C_2\| \le \beta \|C_1-C_2\|,
\]
proving the Lipschitz property.
\end{proof}

\vspace{0.5cm}

{\bf Acknowledgments} 

We thank Saira Mian for helpful discussions about the application to
chromosome arrangement in cell nuclei. We are grateful to the
Institute for Mathematics and its Applications at the University of
Minnesota for supporting visits by both authors while this research
was conducted.

\bibliographystyle{plain} 
\bibliography{ellipses}

\begin{thebibliography}{10}

\bibitem{Cremer_radial}
A.~Bolzer, G.~Kreth, I.~Solovei, D.~Koehler, K.~Saracoglu, C.~Fauth,
  S.~M\"uller, R.~Eils, C.~Cremer, M.~R. Speicher, and T.~Cremer.
\newblock Three-dimensional maps of all chromosomes in human male fibroblast
  nuclei and prometaphase rosettes.
\newblock {\em PLoS Biol.}, 3, 2005.

\bibitem{BorL00}
J.~Borwein and A.~S. Lewis.
\newblock {\em Convex Analysis and Nonlinear Optimization: Theory and
  Examples}.
\newblock CMS Books in Mathematics. Springer, 2000.

\bibitem{BoyV03}
S.~Boyd and L.~Vandenberghe.
\newblock {\em Convex Optimization}.
\newblock Cambridge University Press, 2003.

\bibitem{Cla83}
F.~H. Clarke.
\newblock {\em Optimization and Nonsmooth Analysis}.
\newblock John Wiley, New York, 1983.

\bibitem{Cremer}
T.~Cremer and M.~Cremer.
\newblock Chromosome territories.
\newblock {\em Cold Spring Harb. Perspect. Biol.}, 2010.

\bibitem{Donev}
A.~Donev, I.~Cisse, D.~Sachs, E.~A. Variano, F.~H. Stillinger, R.~Connelly,
  S.~Torquato, and P.~M. Chaikin.
\newblock Improving the density of jammed disordered packings using ellipsoids.
\newblock {\em Science}, 303:990--993, February 2004.

\bibitem{GraB12}
M.~Grant and S.~Boyd.
\newblock {\em {CVX} User's Guide}.
\newblock Stanford University, version 1.22 edition, February 2012.

\bibitem{Hales}
T.~A. Hales.
\newblock A proof of the {Kepler} conjecture.
\newblock {\em Annals of Mathematics, Second Series}, 162(3):1065--1185,
  November 2005.

\bibitem{Khalil_heterologues}
A.~Khalil, J.~L. Grant, L.~B. Caddle, E.~Atzema, K.~D. Mills, and A.~Arneodo.
\newblock Chromosome territories have a highly nonspherical morphology and
  nonrandom positioning.
\newblock {\em Chromosome Res.}, 15:899--916, 2007.

\bibitem{LubG04a}
B.~D. Lubachevsky and R.~L. Graham.
\newblock Curved hexagonal packings of equal disks in a circle.
\newblock {\em Discrete and Computational Geometry}, 18(2):179--194, June 2007.

\bibitem{Berezney_cancer}
N.~V. Marella, S.~Bhattacharya, L.~Mukherjee, J.~Xu, and R.~Berezney.
\newblock Cell type specific chromosome territory organization in the
  interphase nucleus of normal and cancer cells.
\newblock {\em J. Cell. Physiol.}, 221:130--138, 2009.

\bibitem{Mueller_chr_packing}
I.~M\"uller, S.~Boyle, R.~H. Singer, W.~A. Bickmore, and J.~R. Chubb.
\newblock Stable morphology, but dynamic internal reorganisation, of interphase
  human chromosomes in living cells.
\newblock {\em PLoS One}, 5, 2010.

\bibitem{Roc70}
R.~T. Rockafellar.
\newblock {\em Convex Analysis}.
\newblock Princeton University Press, Princeton, N.J., 1970.

\bibitem{Rogers}
C.~A. Rogers.
\newblock {\em Packing and Covering}, volume~54 of {\em Cambridge Tracts in
  Mathematics and Mathematical Physics}.
\newblock Cambridge University Press, 1964.

\bibitem{Cremer_primates}
H.~Tanabe, S.~M\"uller, M.~Neusser, J.~von Hase, E.~Calcagno, M.~Cremer,
  I.~Solovei, C.~Cremer, and T.~Cremer.
\newblock Evolutionary conservation of chromosome territory arrangements in
  cell nuclei from higher primates.
\newblock {\em Proc. Natl. Acad. Sci.}, 99:4424--4429, 2002.

\bibitem{Thue}
A.~Thue.
\newblock {\"Uber die dichteste Zusammenstellung von kongruenten Kreisen in
  einer Ebene}.
\newblock {\em Norske Vod. Selsk. Skr.}, 1:1--9, 1910.

\bibitem{Tod01a}
M.~J. Todd.
\newblock Semidefinite optimization.
\newblock {\em Acta Numerica}, 10:515--560, 2001.

\bibitem{Berezney_nonrandom}
M.~J. Zeitz, L.~Mukherjee, S.~Bhattacharya, J.~Xu, and R.~Berezney.
\newblock A probabilistic model for the arrangement of a subset of human
  chromosome territories in {WI38} human fibroblasts.
\newblock {\em J. Cell. Physiol.}, 221:120--129, 2009.

\end{thebibliography}

\end{document}